\theoremstyle{plain}
\renewcommand\thefigure{\@arabic\c@figure}
\newtheorem{thm}{\bf Theorem}
\newenvironment{theorem}{\begin{thm}} {\end{thm}}
\newtheorem{cor}{\bf Corollary}
\newtheorem{prop}{Proposition}[section]
\newtheorem{lmm}{\bf Lemma}
\newenvironment{lemma}{\begin{lmm}}{\end{lmm}}
\theoremstyle{remark}
\newtheorem{rem}{\bf Remark}
\def \epsilon {{\varepsilon}}
\definecolor{bgblue}{rgb}{0.04,0.39,0.54}
\definecolor{lired}{rgb}{0.3, 0.0, 0.0}
\definecolor{ligreen}{rgb}{0.0, 0.3, 0.0}
\definecolor{liblue}{rgb}{0.9, 1.0, 1.0}
\definecolor{gray}{rgb}{0.6, 0.6, 0.6}
\definecolor{sky}{rgb}{0.3, 1.0, 1.0}
\definecolor{bunhong}{rgb}{1.0, 0.3, 1.0}
\definecolor{yellow}{rgb}{0.97, 1, 0.0}
\definecolor{liyellow}{rgb}{0.9, 0.8, 0.0}
\definecolor{cengse}{rgb}{0.00,0.40,0.29}
\renewcommand \wedge \times
\begin{document}

{\title[on a new class of BDF and IMEX schemes] {
		On a new class of BDF and IMEX schemes for  parabolic type equations}
	
	\author[
	F. Huang and J. Shen
	]{
		Fukeng Huang$^\dag$ and Jie Shen$^\ddag$
	}
	\thanks{$^\dag$ Department of Mathematics, National	University of Singapore, Singapore, 119076 (hfkeng@nus.edu.sg). \\
	{}\hskip 10pt	$^\ddag$ School of Mathematical Science, Eastern Institute of Technology, Ningbo, 315200, China (jshen@eitech.edu.cn). This work is supported in part by NSFC grant  11971407.}

	\keywords{stability; error analysis; implicit-explicit schemes;  parabolic systems}
	\subjclass[2000]{65M12; 76D05; 65M15}
	
	\begin{abstract}
		{\color{black} When applying the classical multistep schemes for  solving differential equations}, one often faces the dilemma that smaller time steps are needed with higher-order schemes, making it  impractical to use high-order schemes for stiff problems.
		We construct in this paper a new class of BDF and implicit-explicit (IMEX)  schemes  for  parabolic type equations based on the Taylor expansions at time $t^{n+\beta}$ with $\beta > 1$ being a tunable parameter. 	{\color{black}These new schemes, with a suitable $\beta$,  allow  larger time steps at higher-order for stiff problems than that is allowed with a usual higher-order scheme. For parabolic type equations, we identify an explicit uniform  multiplier for the new second- to fourth-order schemes, and  conduct rigorously stability and error analysis   by using the energy argument. We also present  ample numerical examples to validate our findings.}
	\end{abstract}
	\maketitle
	
	\section{Introduction}
	We consider in this paper numerical methods of a class of nonlinear ordinary or partial differential equations in the form
	\begin{equation}\label{nonlinear}
		\begin{split}
			& u_t+\mathcal{L}u(t)+\mathcal{G}[u(t)]=f(t),\quad 0<t<T,\\
			& u(0)=u^0,
		\end{split}
	\end{equation}
	where $\mathcal{L}$ is a linear (or possibly nonlinear) positive  operator  and  $\mathcal{G}$ is a nonlinear operator, whose exact descriptions can be found in the next section.
	
	Numerical approximation of  ordinary  differential equations (ODEs) is a very mature field (see, for instance, \cite{butcher1987numerical,butcher2016numerical,hairer2006geometric,hundsdorfer2003numerical}), and the numerical methods developed for ODEs have been playing important roles in solving partial differential equations (PDEs) in the form of \eqref{nonlinear} through the method of lines \cite{schiesser2012numerical}, or the so called method of lines transpose  \cite{jia2008krylov}, i.e.,  discretizing first in time followed by the discretization in space. In particular, the backward difference formulae (BDF) and the implicit-explicit (IMEX) schemes are frequently used to deal with \eqref{nonlinear} which exhibit stiff behaviors \cite{ascher1995implicit,higham1993stiffness,kassam2005fourth}.
	
	Two key  issues of numerical methods for \eqref{nonlinear} are stability and accuracy. In order to obtain highly accurate solution with less computational costs, it is highly desirable to be able to use higher-order schemes with larger time steps.
	However, as we increase the order of accuracy of BDF or IMEX type schemes, their stability regions usually decrease, i.e., smaller time steps need to be used with higher-order schemes, particularly for stiff problems, making high-order schemes impractical for many complex nonlinear systems. A natural question arises: is it possible to develop higher-order multi-step schemes such that their stability regions are comparable or even larger than lower-order classical BDF or IMEX schemes?
	
	The main purposes of this paper are two-fold:
	\begin{itemize}
		\item to construct a new class  of  BDF and IMEX schemes with a tunable parameter such that larger time steps can be used in higher-order schemes;
		\item to  carry out a rigorous stability and error analysis for  this new class of IMEX schemes.
	\end{itemize}
	Furthermore, we provide convincing numerical evidences to validate our theoretical findings.
	
	We recall that the  classical BDF and IMEX schemes for approximating solution at time $t^{n+1}$   are usually constructed using the Taylor expansion formulae at time $t^{n+\beta}$ with $\beta\in \{0,1\}$. In this paper, we shall construct a new class of  BDF and  IMEX schemes based on the Taylor expansion formulae at time $t^{n+\beta}$ with $\beta\ge 1$ being a tunable parameter. The new schemes are a simple generalization of the classical  BDF or IMEX schemes with essentially the same computational efforts. However, they enjoy a remarkable property that their stability regions increase as the parameter $\beta$ increases, making it possible, by choosing a suitably large $\beta$,  to use high-order schemes with reasonably larger time steps. The price to pay with a larger $\beta$ is  increased truncation errors which can be more than compensated with higher-order of accuracy.
	
	On the other hand, it is well known that a rigorous stability and error analysis {\color{black} by using the energy technique} of the  classical BDF (and the related IMEX)  schemes of order up to five  (cf. \cite{akrivis2017combining,akrivis2015fully,MR3755672,MR4048622,MR3119720}) relies on a result by Nevanlinna and Odeh \cite{nevanlinna1981} (see also \cite{BDF6} for the extension to the six-order BDF scheme) in which the existence of suitable multiplier that can lead to energy stability was established. It is therefore  natural to ask whether such a  multiplier exists for the new
	class of BDF schemes. We shall construct explicitly suitable multipliers in a more general form
	for the new class of BDF schemes of orders two to four, and derive explicit telescoping formulae associated with these multipliers. Furthermore, for nonlinear parabolic type equations, we show rigorously that the stability condition of the new class of IMEX schemes  becomes  less restrictive as $\beta$ increases, particularly compared with the classical case of $\beta=1$.
	
	The idea behind the new class of BDF and IMEX schemes is very simple but original,  and can be easily extended to other type numerical schemes. However, our stability and error analysis rely on the explicit formulae for the uniform multipliers and telescoping decomposition whose derivations are totally nontrivial and original. On the other hand,
	the new schemes can be easily  implemented with a minimal effort by modifying  the  code based on the classical BDF or IMEX schemes, and  provide a much needed improvement on the stability of higher-order schemes.

	The rest of the paper is organized as follows. In Section 2, we describe the abstract setting and construct  the  new class of BDF and IMEX  methods  based on the Taylor expansion at time $t^{n+\beta}$ and investigate their stability regions. In Section 3, we identify an explicit and uniform multiplier for the new class of BDF and  IMEX  schemes, which plays an essential role in the stability and error analysis. In Section 4, we establish the unconditional stability for the linear parabolic equations and the stability, followed by error analysis for the nonlinear parabolic equations in Section 5. In section 6, we discuss extension  to the fifth-order scheme. In section 7, we provide  numerical examples to show the advantages of our new schemes, followed by some concluding remarks  in section 8.

\section{A new class of BDF and IMEX  schemes}\label{IMEXformula}
\subsection{The abstract setting}
We first describe the functional setting. For the sake of simplicity, we consider a simpler setting than that used in \cite{akrivis2015fully}, although our analysis would also work for the more general setting there.

Let $V$ and $H$ be two real Hilbert spaces such that $V \subset H=H' \subset V'$, with $V$ densely and continuously embedded in $H$ and $V'$ being the dual space of $V$. We consider \eqref{nonlinear} with
  $\mathcal{L}$: $V \rightarrow V'$ being a positive definite, {\color{black}self-adjoint}, linear operator,   %$\mathcal{G}$: $V \rightarrow V'$ is a nonlinear operator
   and $f$ in $V'$ is a given source term. We denote the inner product in $H$ by $(\cdot, \cdot)$, and the induced norm in $H$ by $|\cdot|$. We also denote the norm  in $V$  by $\|\cdot\|$ which is defined as  $\|u\|:=|\mathcal{L}^{1/2}u|=(\mathcal{L}u,u)^{1/2}$. The dual norm in $V'$ is defined by
\begin{equation}\label{starnorm}
\|v\|_{\star}:=\mathop{\rm{sup}}\limits_{u \in V \backslash \{0\}} \frac{|(v,u)|}{\|u\|},\quad \forall v \in V'.
\end{equation}
We assume that  the nonlinear operator $\mathcal{G}$ satisfies the following local Lipschitz condition  \cite{akrivis2015fully} in a ball {\color{black} $\mathcal{B}_{u(t)}:=\{v \in V: \|v-u(t)\| \le 1\}$}, centered at the exact solution  $u(t)$,
\begin{equation}\label{LocalLp}
\|\mathcal{G}(v)-\mathcal{G}(\tilde{v})\|^2_{\star} \le \gamma\|v-\tilde{v}\|^2+\mu |v-\tilde{v}|^2, \quad \forall v, \tilde{v} \in \mathcal{B}_{u(t)},\quad \forall t \in [0,T],
\end{equation}
 with a non-negative constant $\gamma$ and an arbitrary constant $\mu$.

%\section{Preliminaries}

%\subsection{Several useful lemmas}

%We shall first present a more  general  IMEX type schemes up to order $4$ based on the Taylor expansion at time $t^{n+\beta}$ and show that its stability region increases with $\beta$.
\subsection{Construction of the new  schemes}
%We can derive the coefficients for the $k$-th order schemes as follows.
We shall first construct the new schemes for \eqref{nonlinear}  based on the Taylor expansion at time $t^{n+\beta}$. Given an integer $k\ge 2$, denoting $t^n= n\Delta t$, it follows from the Taylor expansion at time $t^{n+\beta}$ that
\begin{equation}\label{Taylor}
\phi(t^{n+1-i})=\sum_{m=0}^{k-1}[(1-i-\beta)\Delta t]^m \frac{\phi^{(m)}(t^{n+\beta})}{m!}+\mathcal{O}(\Delta t^{k}),\quad \text{ for } k\ge  i \ge 0.
\end{equation}
Then we can derive from the above an implicit difference formula to approximate $\partial_t\phi(t^{n+\beta})$:
\begin{equation}\label{TaylorA}
\frac{1}{\Delta t}\sum_{q=0}^{k}a_{k,q}(\beta)\phi(t^{n+1-k+q})=\partial_t\phi(t^{n+\beta})+\mathcal{O}(\Delta t^k),
\end{equation}
where  $a_{k,q}(\beta)$ can be uniquely determined by solving the following linear system with a Vandermonde matrix:
\begin{equation}\label{solve_akq}
{\left[
\begin{array}{ccccc}
1&1&...&...&1\\
\beta-1&\beta&...&...&\beta+k-1\\
(\beta-1)^2&\beta^2&...&...&(\beta+k-1)^2\\
\vdots&\vdots&\vdots&\vdots&\vdots\\
(\beta-1)^k&\beta^k&...&...&(\beta+k-1)^k
\end{array}
\right]}{\left[
\begin{array}{c}
a_{k,k}(\beta)\\a_{k,k-1}(\beta)\\a_{k,k-2}(\beta)\\ \vdots \\a_{k,0}(\beta)
\end{array}
\right]}=
{\left[
\begin{array}{c}
0\\-1\\0\\ \vdots \\0
\end{array}
\right]}.
\end{equation}
Similarly, we can derive an implicit difference formula to approximate $\phi(t^{n+\beta})$:
\begin{equation}\label{TaylorB}
\sum_{q=0}^{k-1}b_{k,q}(\beta)\phi(t^{n+2-k+q})=\phi(t^{n+\beta})+\mathcal{O}(\Delta t^k),
\end{equation}
with $b_{k,q}(\beta)$ being the unique solution of the following Vandermonde system:
\begin{equation}\label{solve_bkq}
{\left[
\begin{array}{ccccc}
1&1&...&...&1\\
\beta-1&\beta&...&...&\beta+k-2\\
\vdots &\vdots&\vdots&\vdots&\vdots\\
(\beta-1)^{k-1}&\beta^{k-1}&...&...&(\beta+k-2)^{k-1}
\end{array}
\right]}{\left[
\begin{array}{c}
b_{k,k-1}(\beta)\\b_{k,k-2}(\beta)\\\vdots\\b_{k,0}(\beta)
\end{array}
\right]}=
{\left[
\begin{array}{c}
1\\0\\\vdots\\0
\end{array}
\right]}.
\end{equation}
To deal with the nonlinear term in \eqref{nonlinear}, we also need the following explicit difference formula to approximate $\phi(t^{n+\beta})$:
\begin{equation}\label{TaylorC}
	\sum_{q=0}^{k-1}c_{k,q}(\beta)\phi(t^{n+1-k+q})=\phi(t^{n+\beta})+\mathcal{O}(\Delta t^k),
\end{equation}
where $c_{k,q}(\beta)$ can be uniquely determined from:
\begin{equation}\label{solve_ckq}
	{\left[
		\begin{array}{ccccc}
			1&1&...&...&1\\
			\beta&\beta+1&...&...&\beta+k-1\\
			\vdots &\vdots&\vdots&\vdots&\vdots\\
			\beta^{k-1}&(\beta+1)^{k-1}&...&...&(\beta+k-1)^{k-1}
		\end{array}
		\right]}{\left[
		\begin{array}{c}
			c_{k,k-1}(\beta)\\c_{k,k-2}(\beta)\\\vdots\\c_{k,0}(\beta)
		\end{array}
		\right]}=
	{\left[
		\begin{array}{c}
			1\\0\\\vdots\\0
		\end{array}
		\right]}.
\end{equation}
Then, a new class of  BDF schemes for \eqref{nonlinear}  with $\mathcal{G}=0$ is
\begin{equation}\label{genBDF}
	\frac{1}{\Delta t}\sum_{q=0}^{k}a_{k,q}(\beta)\phi^{n+1-k+q}+\mathcal{L}(\sum_{q=0}^{k-1}b_{k,q}(\beta)\phi^{n+2-k+q})=f(t^{n+\beta}),\quad k\ge 2,
\end{equation}
and a new class of  IMEX schemes for \eqref{nonlinear} is
\begin{equation}\label{genIMEX}
	\frac{1}{\Delta t}\sum_{q=0}^{k}a_{k,q}(\beta)\phi^{n+1-k+q}+\mathcal{L}(\sum_{q=0}^{k-1}b_{k,q}(\beta)\phi^{n+2-k+q})+\mathcal{G}(\sum_{q=0}^{k-1}c_{k,q}(\beta)\phi^{n+1-k+q})=f(t^{n+\beta}),\quad k\ge 2.
\end{equation}
%Note that in the case of $\mathcal{G}=0$, the above schemes are simply a generalization of the classical BDF (with $\beta=1$) schemes, and shall be referred below as new or generalized  BDF schemes.
\begin{rem}
	When $\beta=1$, \eqref{genIMEX} (resp. \eqref{genBDF}) becomes the classical semi-implicit IMEX (resp. BDF) schemes, and there have been extensive works regarding its stability and error analysis \cite{akrivis2015,akrivis2016backward,akrivis2015fully,MR4048622,lubich1990} in the literature. {\color{black} For $\forall \beta>1$,  \eqref{genBDF} and \eqref{genIMEX}  still involve values at  the same $k+1$-levels  as the classical  one (with $\beta=1$) on the left hand side while they involve values at time $t^{n+\beta}$ on the right hand side.}
\end{rem}

For the reader's convenience, we list below the coefficients in  \eqref{genIMEX}  for $k=2,3,4$.

\noindent $k=2$:
\begin{small}
		\begin{subequations}\label{IMEX2coeff}
		\begin{align}
		&a_{2,2}(\beta)=\frac{2\beta+1}{2},\quad a_{2,1}(\beta)=-2\beta,\quad a_{2,0}(\beta)=\frac{2\beta-1}{2},\\
		&b_{2,1}(\beta)=\beta,\quad b_{2,0}(\beta)=-(\beta-1),\\
	&	c_{2,1}(\beta)=\beta+1,\,c_{2,0}(\beta)=-\beta.
	\end{align}
		\end{subequations}
		\noindent $k=3$:
	\begin{subequations}\label{IMEX3coeff}
		\begin{align}
			&a_{3,3}(\beta)=\frac{3\beta^2+6\beta+2}{6},\,a_{3,2}(\beta)=\frac{-(9\beta^2+12\beta-3)}{6},\,a_{3,1}(\beta)=\frac{9\beta^2+6\beta-6}{6},\,a_{3,0}(\beta)=\frac{-(3\beta^2-1)}{6},\\
			&b_{3,2}(\beta)=\frac{\beta^2+\beta}{2},\,b_{3,1}(\beta)=-(\beta^2-1),\,b_{3,0}(\beta)=\frac{\beta^2-\beta}{2},\\
		&	c_{3,2}(\beta)=\frac{\beta^2+3\beta+2}{2},\,c_{3,1}(\beta)=-(\beta^2+2\beta),\,   c_{3,0}(\beta)=\frac{\beta^2+\beta}{2}.
		\end{align}
	\end{subequations}
	\noindent $k=4$:
	\begin{subequations}\label{IMEX4coeff}
		\begin{align}
			&a_{4,4}(\beta)=\frac{2\beta^3+9\beta^2+11\beta+3}{12},\,a_{4,3}(\beta)=\frac{-8\beta^3-30\beta^2-20\beta+10}{12},\,a_{4,2}(\beta)=\frac{12\beta^3+36\beta^2+6\beta-18}{12}\nonumber\\
			&a_{4,1}(\beta)=\frac{-8\beta^3-18\beta^2+4\beta+6}{12},\,a_{4,0}(\beta)=\frac{2\beta^3+3\beta^2-\beta-1}{12},\\
			&b_{4,3}(\beta)=\frac{\beta^3+3\beta^2+2\beta}{6},\,b_{4,2}(\beta)=\frac{-\beta^3-2\beta^2+\beta+2}{2},\,b_{4,1}(\beta)=\frac{\beta^3+\beta^2-2\beta}{2},\,b_{4,0}(\beta)=\frac{-\beta^3+\beta}{6},\\
	&c_{4,3}(\beta)=\frac{\beta^3+6\beta^2+11\beta+6}{6},\,c_{4,2}(\beta)=\frac{-\beta^3-5\beta^2-6\beta}{2},\,c_{4,1}(\beta)=\frac{\beta^3+4\beta^2+3\beta}{2},\nonumber\\
			& c_{4,0}(\beta)=\frac{-\beta^3-3\beta^2-2\beta}{6}.
		\end{align}
	\end{subequations}
\end{small}
{\color{black}
	\begin{rem}
		Instead of deriving \eqref{genIMEX} from Taylor expansions, one may also derive it  by following the standard construction of the usual multistep methods using interpolation formulae (see, e.g., Section 2 in \cite{iserles2009first}). In fact, it can be shown that  the coefficients $a_{k,q}(\beta), b_{k,q}(\beta),c_{k,q}(\beta) $ can be determined by the values at $t^{n+\beta}$ of the corresponding Lagrange polynomials and their derivatives. For example,
		\begin{equation}
			a_{k,q}(\beta)=\Delta t L_q'(t^{n+\beta}),\quad q=0,...,k,
		\end{equation}
		where $L_q$  is the Lagrange polynomials associated with $t^{n+1-k},...,t^{n+1}$.
	\end{rem}
}
\subsection{Linear stability regions} In this subsection, we  investigate the regions of linear stability of the new schemes \eqref{genBDF}.
For the test equation $\phi_t= \lambda \phi$,  \eqref{genBDF} reduces to
\begin{equation}\label{genIMEX34}
	\frac{1}{\Delta t}\sum_{q=0}^{k}a_{k,q}(\beta)\phi^{n+1-k+q}=\lambda\sum_{q=0}^{k-1}b_{k,q}(\beta)\phi^{n+2-k+q},\quad k\ge 2.
\end{equation}
In order to study the stability regions for  $\beta \ne 1$, we set $\phi^n=w^n$ (here, $``n"$ is an upper index in $\phi^n$ and an exponent in $w^n$) and $z=\lambda \Delta t$ in \eqref{genIMEX34} to obtain its characteristic equation, e.g., in the case of $k=2$, it takes the form:
\begin{equation}\label{eq:chara}
	(2\beta+1-2\beta z)w^2+(2(\beta-1)z-4\beta)w+(2\beta-1)=0.
\end{equation}
Then the region of  absolute stability of method \eqref{genIMEX34} is the set of all $z \in \mathbb{C}$  such that  the characteristic polynomial satisfies the root condition.
 We recall that the second order case  was already considered in \cite{huang2023stability}, and it was shown that   the  second-order case of \eqref{genIMEX34} is  A-stable for $\beta \ge 1$, and more importantly, the  stability regions increase as we increase $\beta$.

 In Fig. \ref{fig:stability3} and Fig. \ref{fig:stability4} , we plot  the stability regions of the general  third- and fourth-order BDF schemes  for $\beta=1,3,5$. We observe that, the  stability regions increase as we increase $\beta$.
\begin{figure}[!htbp]
 \centering
  \subfigure[third order, $\beta=1$]{ \includegraphics[scale=.22]{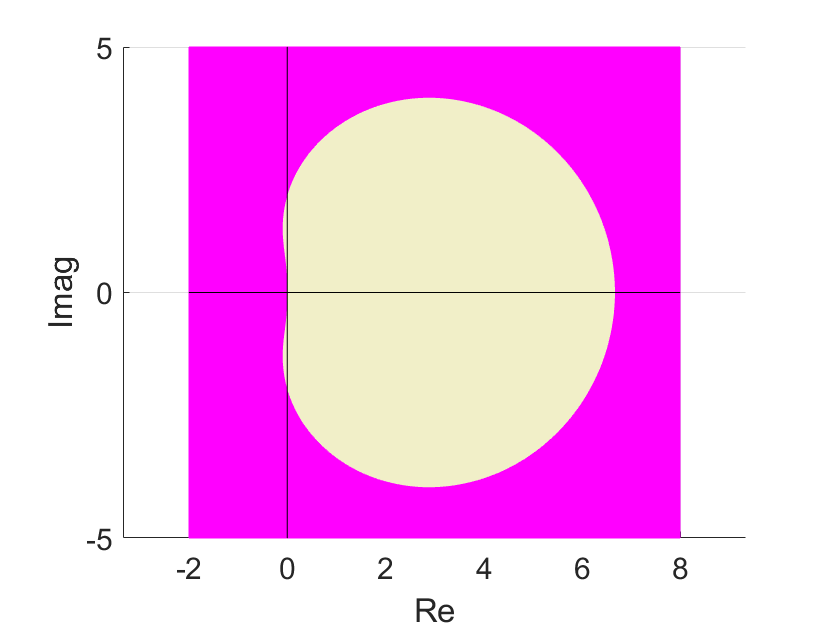}}
  \subfigure[third order, $\beta=3$]{ \includegraphics[scale=.22]{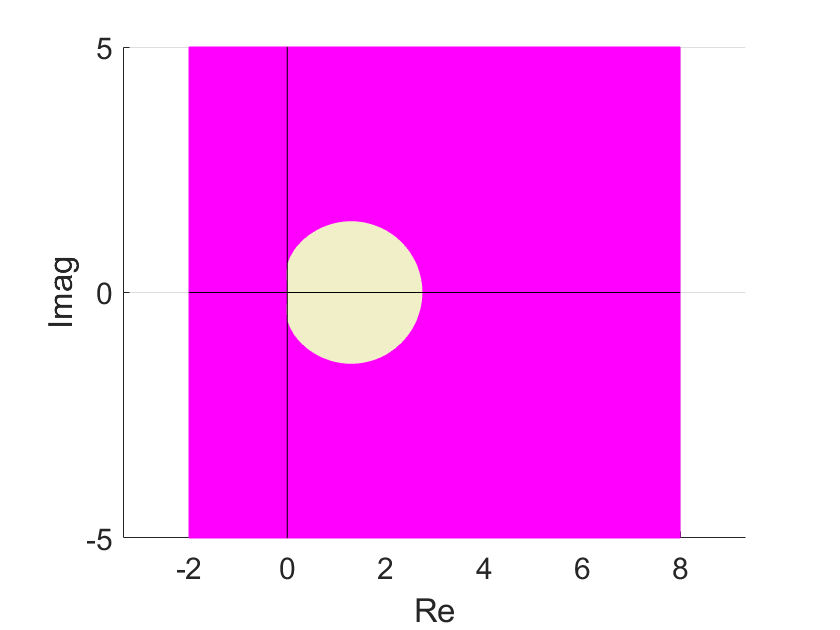}}
  \subfigure[third order, $\beta=5$]{ \includegraphics[scale=.22]{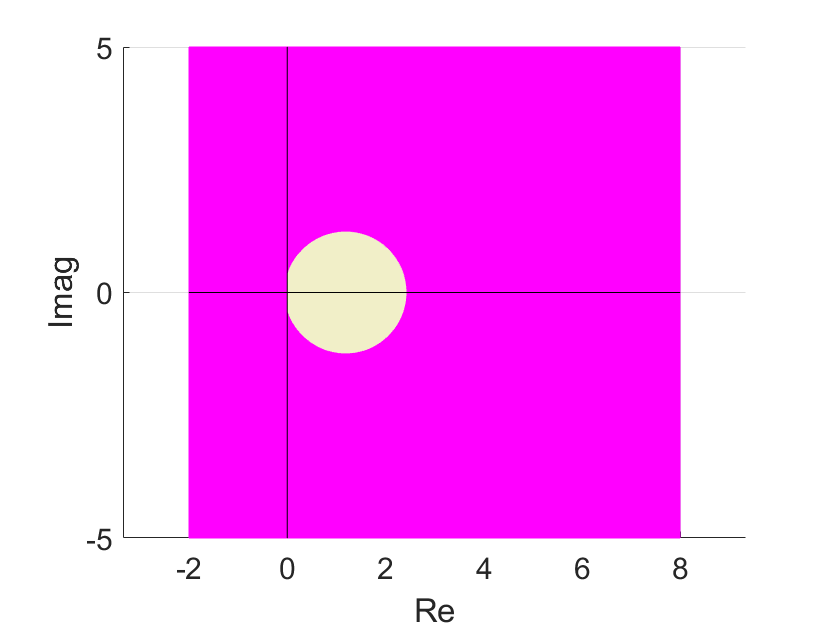}}\\
  \subfigure[$\beta=1$, zoom in around the origin]{ \includegraphics[scale=.22]{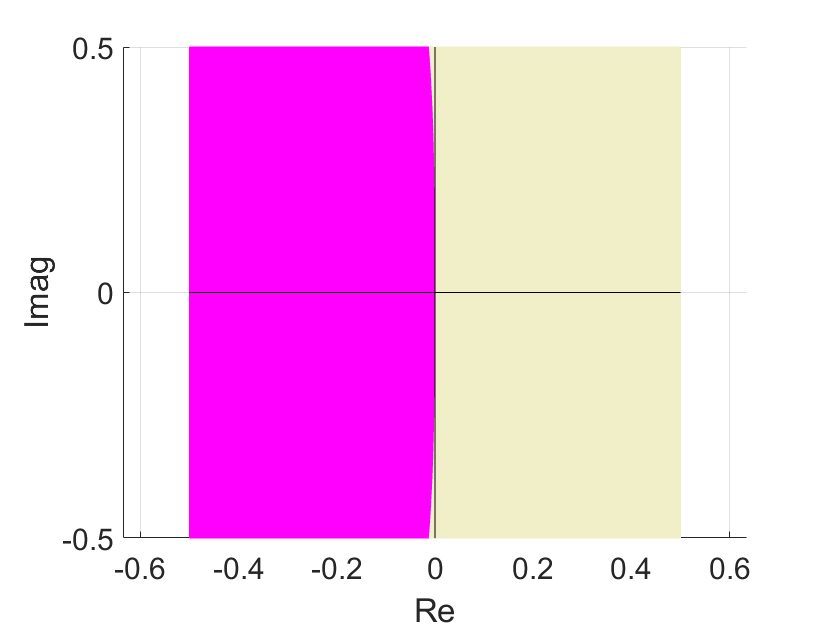}}
  \subfigure[$\beta=3$, zoom in around the origin]{ \includegraphics[scale=.22]{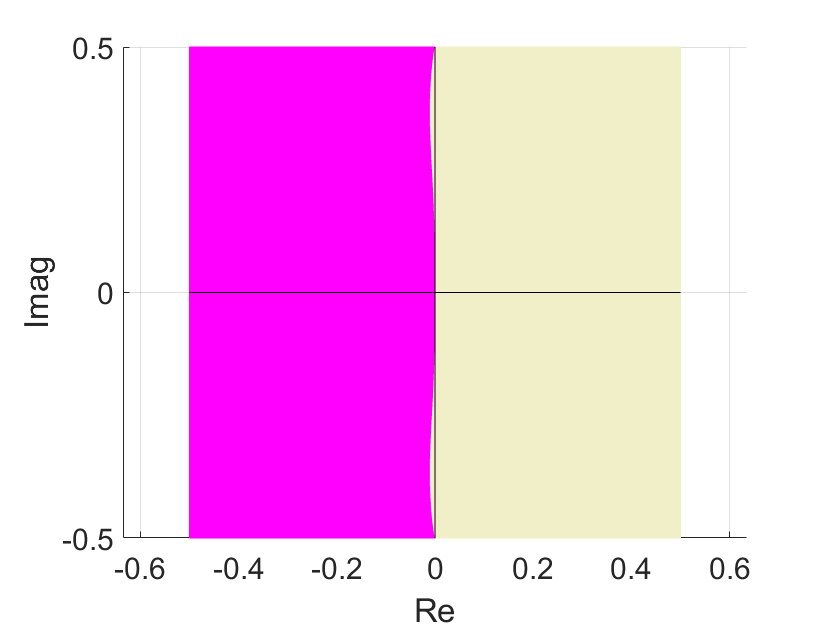}}
  \subfigure[$\beta=5$, zoom in around the origin]{ \includegraphics[scale=.22]{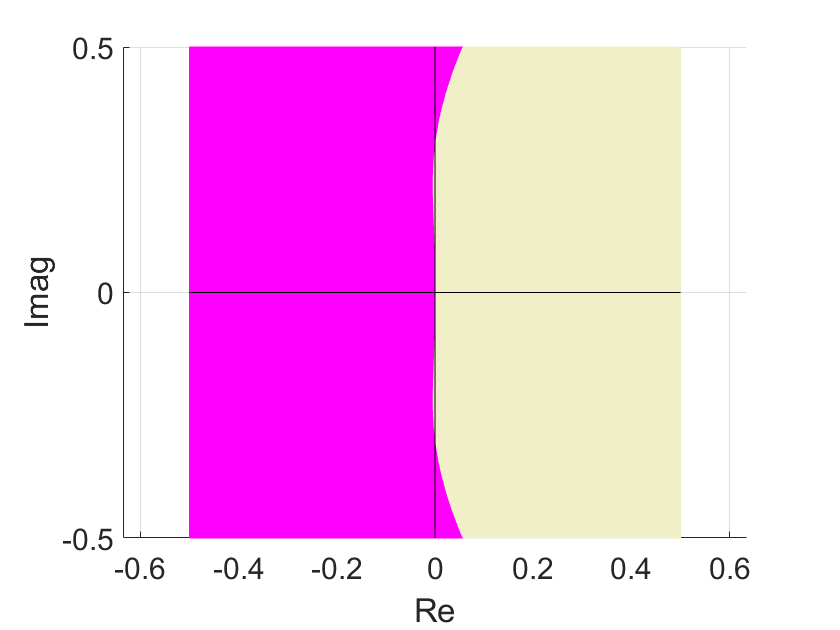}}\\
 \caption{ The pink parts show the region of  absolute stability of the general third order BDF scheme with Taylor expansion at $n+\beta, \, \beta=1,3,5$.
 }\label{fig:stability3}
 \end{figure}

\begin{figure}[!htbp]
 \centering
  \subfigure[fourth order, $\beta=1$]{ \includegraphics[scale=.22]{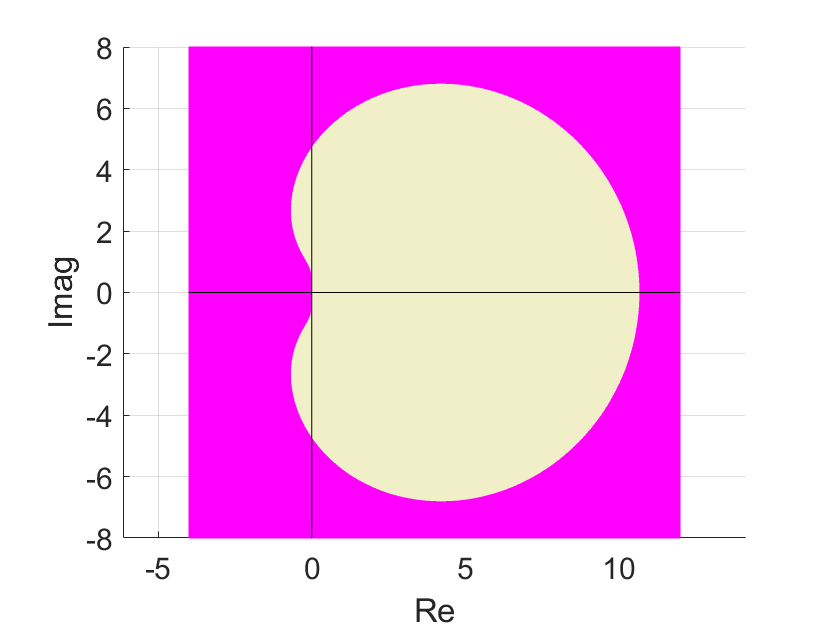}}
  \subfigure[fourth order, $\beta=3$]{ \includegraphics[scale=.22]{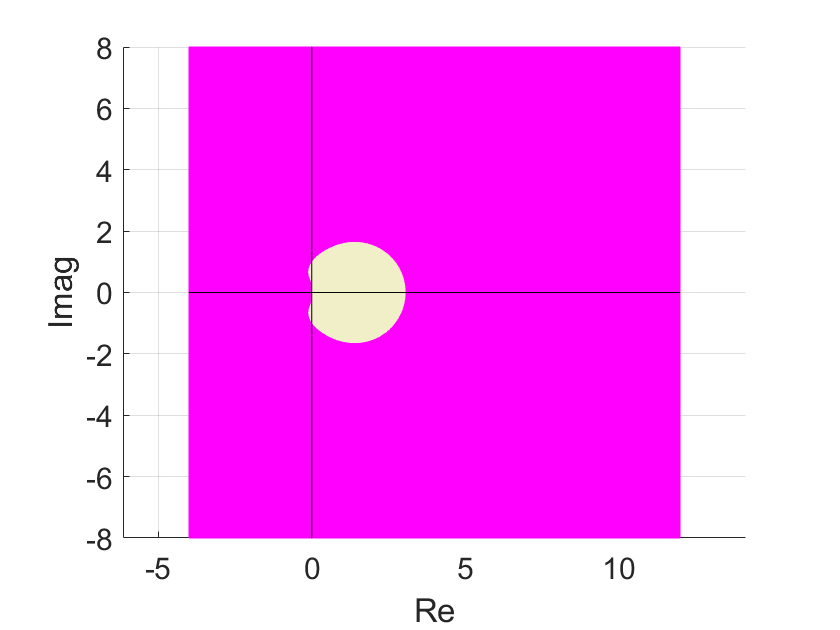}}
  \subfigure[fourth order, $\beta=5$]{ \includegraphics[scale=.22]{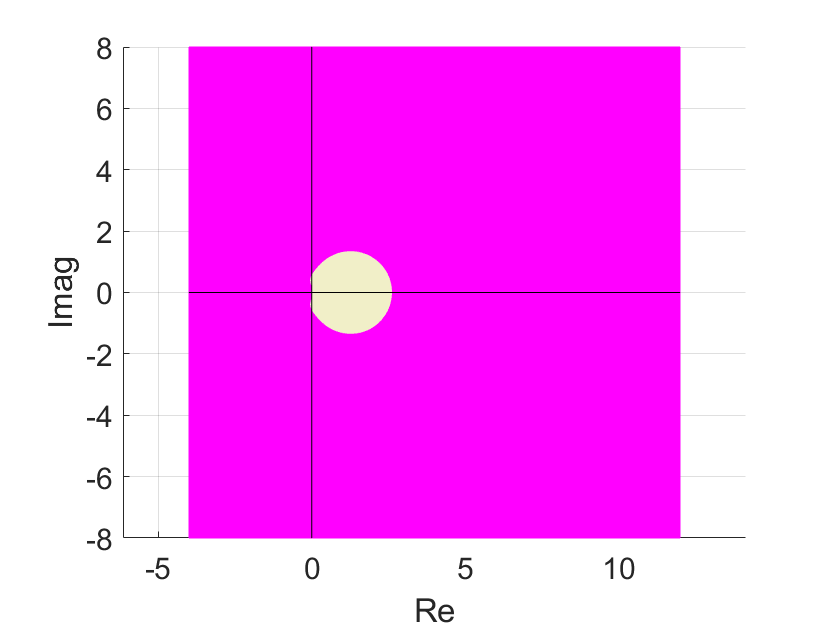}}\\
  \subfigure[$\beta=1$, zoom in around the origin]{ \includegraphics[scale=.22]{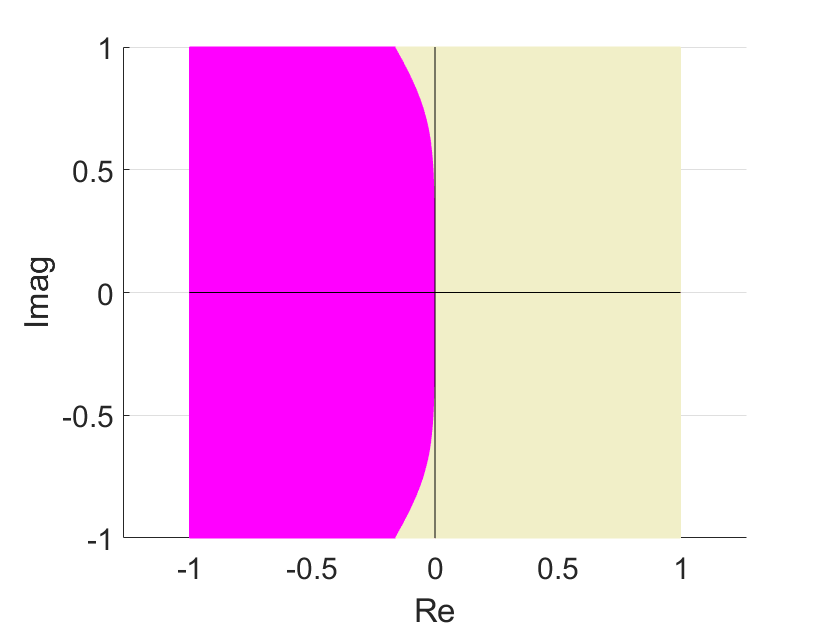}}
  \subfigure[$\beta=3$, zoom in around the origin]{ \includegraphics[scale=.22]{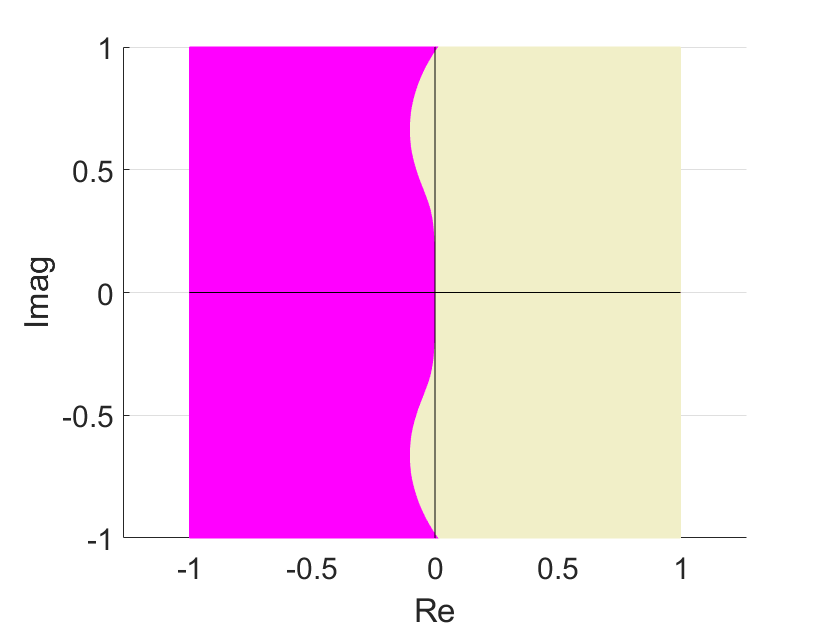}}
  \subfigure[$\beta=5$, zoom in around the origin]{ \includegraphics[scale=.22]{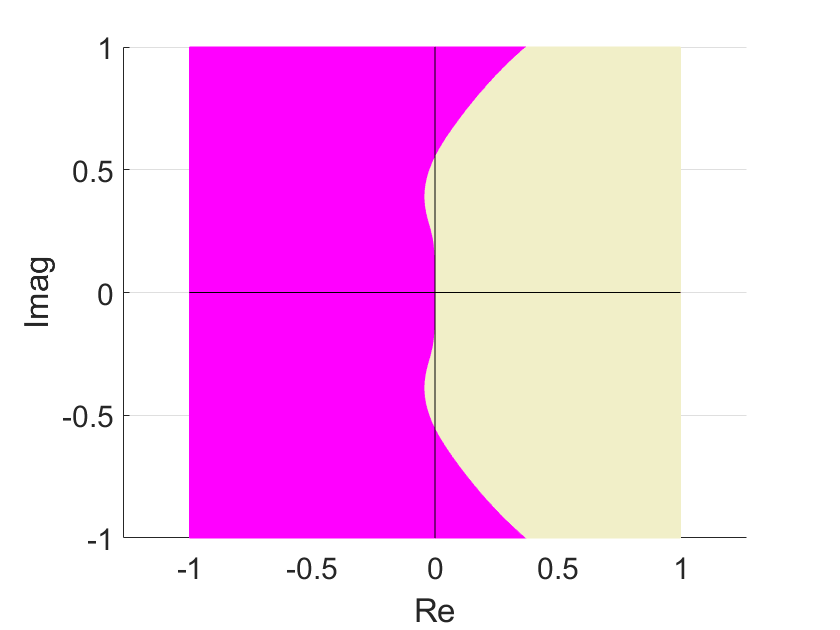}}\\
 \caption{ The pink parts show the region of  absolute stability of the general fourth-order BDF scheme with Taylor expansion at $n+\beta, \, \beta=1,3,5$.
 }\label{fig:stability4}
 \end{figure}

In order to have a better sense on how the stability regions vary with different $\beta$ and $k$, we plot in Table \ref{comparison} a comparison of stability regions in the same scale. We observe that (i) the stability regions increase faster when $\beta$ is closer to 1; and (ii) {\color{black} the area of}  the stability region with $k=4$ and $\beta=3$ is already bigger than that of the classical second-order BDF. Hence,  we can expect that the general fourth-order scheme with $\beta=3$ allows  similar or larger time steps for nonlinear problems than  the classical second-order IMEX, avoiding the usual scenario that smaller time step has to be used when increasing the accuracy order.

\begin{table}[h]
\centering
\begin{tabular}{c|ccc}
& $\beta=1$ &$\beta=3$ & $\beta=5$\\
\hline
second order& \begin{minipage}[b]{0.25\columnwidth}
		\centering
		\raisebox{-.5\height}{\includegraphics[width=0.95\linewidth]{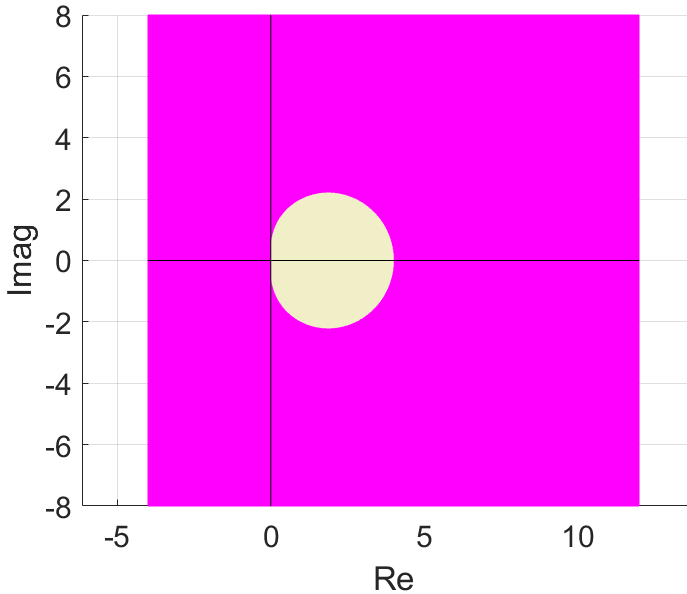}}
	\end{minipage}
&\begin{minipage}[b]{0.25\columnwidth}
		\centering
		\raisebox{-.5\height}{\includegraphics[width=0.95\linewidth]{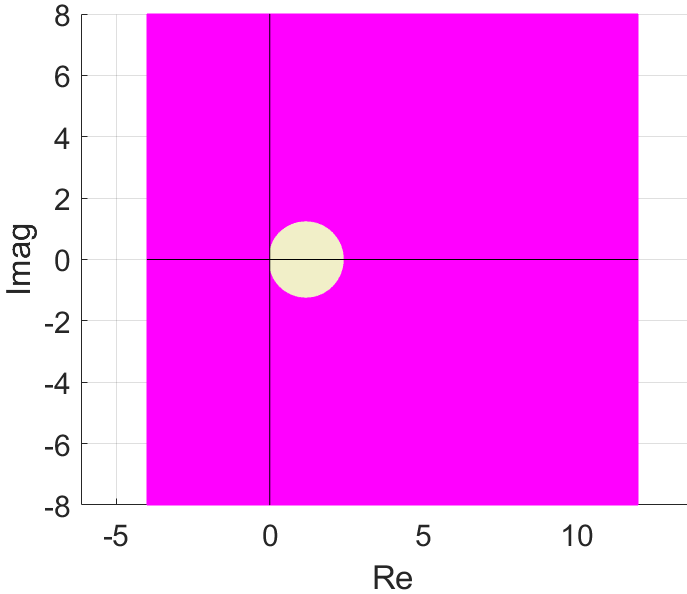}}
	\end{minipage}
&\begin{minipage}[b]{0.25\columnwidth}
		\centering
		\raisebox{-.5\height}{\includegraphics[width=0.95\linewidth]{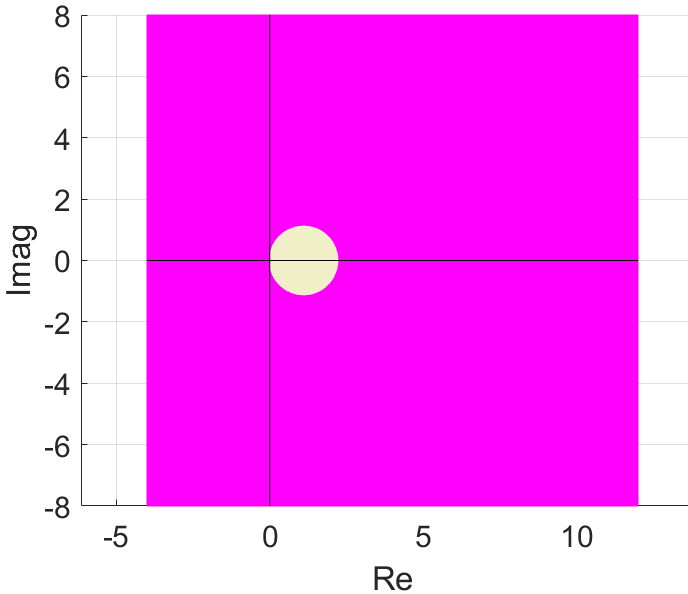}}
	\end{minipage}\\
\hline
third order & \begin{minipage}[b]{0.25\columnwidth}
		\centering
		\raisebox{-.5\height}{\includegraphics[width=0.95\linewidth]{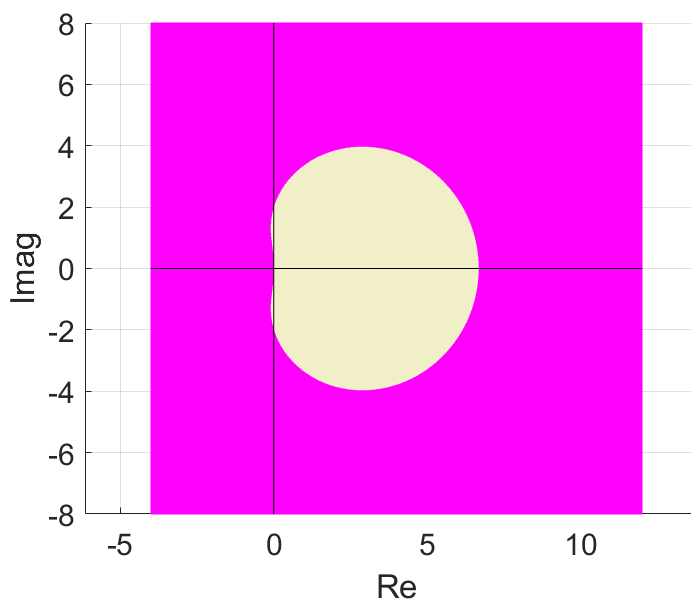}}
	\end{minipage}
&\begin{minipage}[b]{0.25\columnwidth}
		\centering
		\raisebox{-.5\height}{\includegraphics[width=0.95\linewidth]{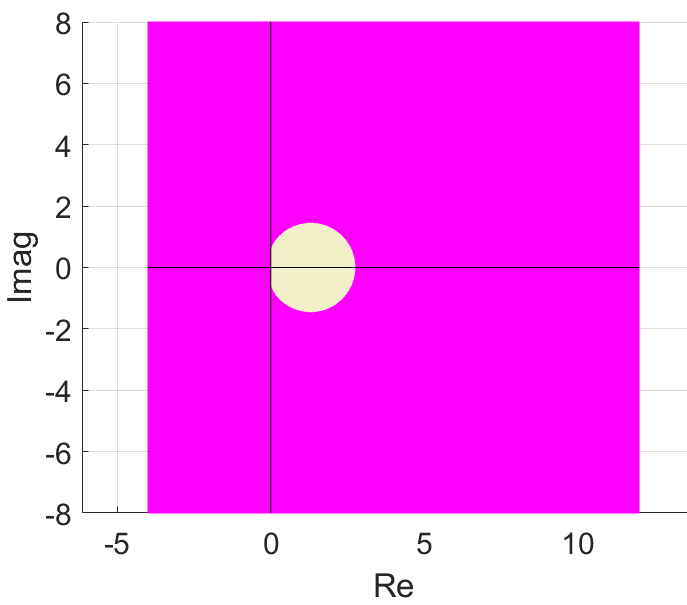}}
	\end{minipage}
&\begin{minipage}[b]{0.25\columnwidth}
		\centering
		\raisebox{-.5\height}{\includegraphics[width=0.95\linewidth]{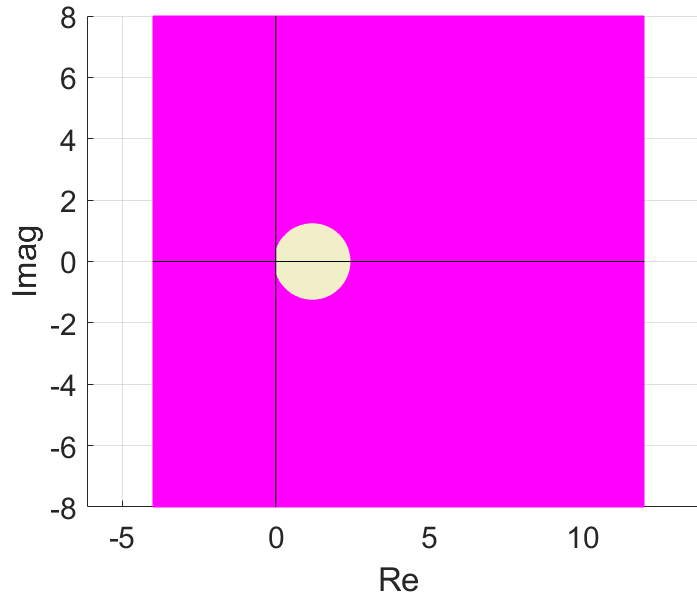}}
	\end{minipage}\\
\hline
fourth order & \begin{minipage}[b]{0.25\columnwidth}
		\centering
		\raisebox{-.5\height}{\includegraphics[width=0.95\linewidth]{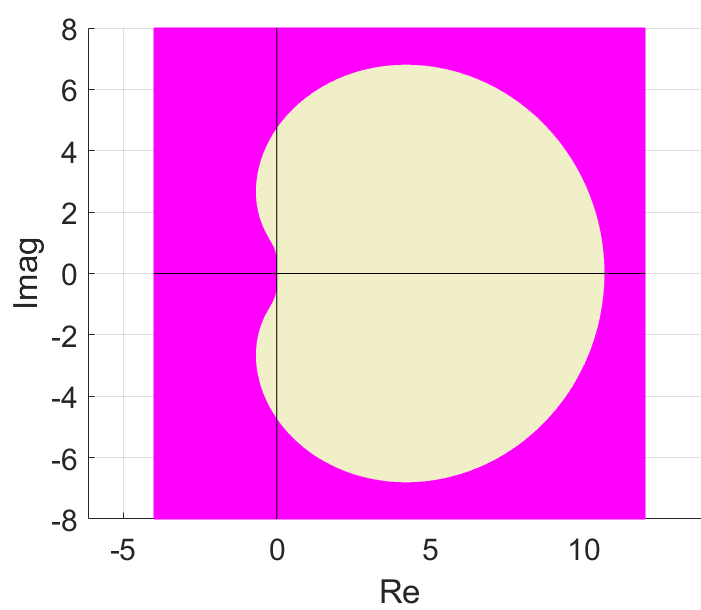}}
	\end{minipage}
&\begin{minipage}[b]{0.25\columnwidth}
		\centering
		\raisebox{-.5\height}{\includegraphics[width=0.95\linewidth]{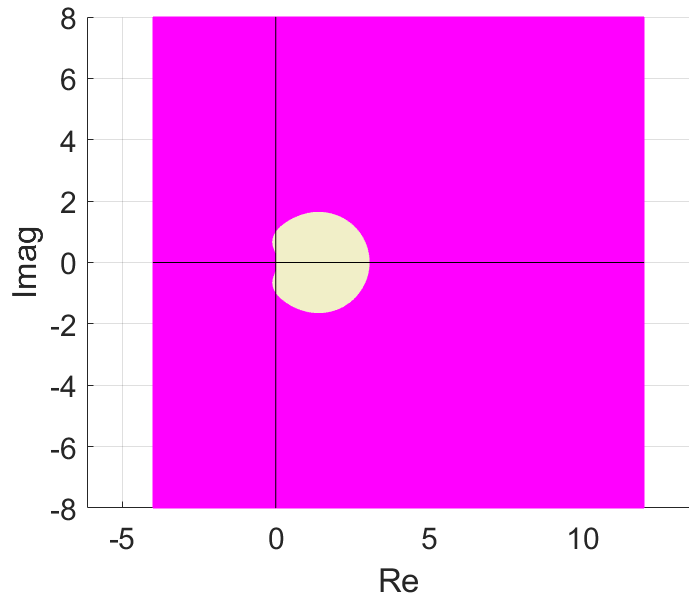}}
	\end{minipage}
&\begin{minipage}[b]{0.25\columnwidth}
		\centering
		\raisebox{-.5\height}{\includegraphics[width=0.95\linewidth]{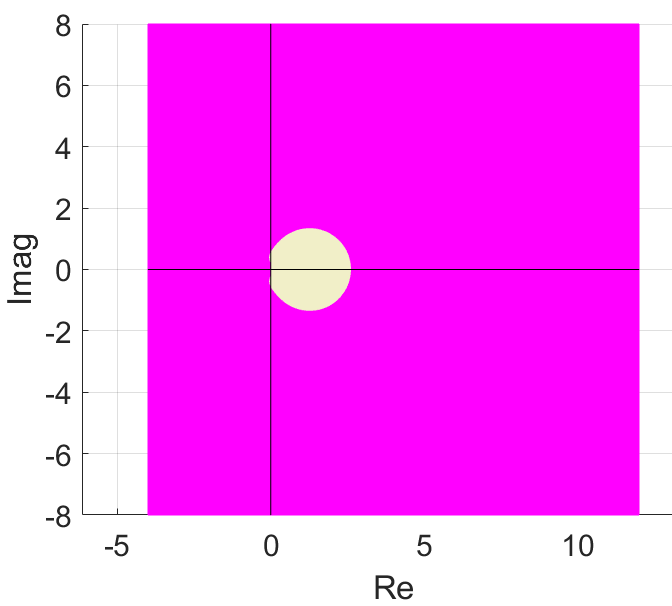}}
	\end{minipage}\\
\hline
\end{tabular}
\caption{Comparison of stability regions for different $k$ and $\beta$ on the same scale. }\label{comparison}
\end{table}
\begin{comment}
 we can derive the following formulas for the third order scheme based on the Taylor expansion  at time $t^{n+\beta}$,
\begin{small}
\begin{subequations}\label{taylorIMEX3}
\begin{align}
&\frac{(3\beta^2+6\beta+2)\phi(t^{n+1})-(9\beta^2+12\beta-3)\phi(t^{n})+(9\beta^2+6\beta-6)\phi(t^{n-1})-(3\beta^2-1)\phi(t^{n-2})}{6\Delta t}\\
&=\partial_t \phi(t^{n+\beta})+\mathcal{O}(\Delta t^3),\\
&\frac{\beta^2+\beta}{2}\phi(t^{n+1})-(\beta^2-1)\phi(t^n)+\frac{\beta^2-\beta}{2}\phi(t^{n-1})=\phi(t^{n+\beta})+\mathcal{O}(\Delta t^3),
\end{align}
\end{subequations}
\end{small}
and for the fourth order scheme,
\begin{small}
\begin{subequations}\label{taylorIMEX4}
\begin{align}
&\frac{(2\beta^3+9\beta^2+11\beta+3)\phi(t^{n+1})+(-8\beta^3-30\beta^2-20\beta+10)\phi(t^n)+(12\beta^3+36\beta^2+6\beta-18)\phi(t^{n-1})}{12\Delta t}\\
& +\frac{(-8\beta^3-18\beta^2+4\beta+6)\phi(t^{n-2})+(2\beta^3+3\beta^2-\beta-1)\phi(t^{n-3})}{12\Delta t}=\partial_t \phi(t^{n+\beta})+\mathcal{O}(\delta t^4),\\
&\frac{\beta^3+3\beta^2+2\beta}{6}\phi(t^{n+1})+\frac{-\beta^3-2\beta^2+\beta+2}{2}\phi(t^{n})+\frac{\beta^3+\beta^2-2\beta}{2}\phi(t^{n-1})+\frac{-\beta^3+\beta}{6}\phi(t^{n-2})\\
&=\phi(t^{n+\beta})+\mathcal{O}(\Delta t^4).
\end{align}
\end{subequations}
\end{small}
\end{comment}

\section{Multipliers for the new BDF and IMEX  schemes}\label{multiplier}
In order to conduct the stability and error analysis for the BDF and IMEX  schemes by using energy techniques, a key step is  to find a suitable multiplier.  A key result which allows one to prove energy stability of  the classical BDF schemes of order up to five  is established in \cite{nevanlinna1981} where the existence of such multiplier is shown, see \cite{BDF6} for extension of this result to six-order BDF.  In this section, we identify  an explicit multiplier, and show that it is  suitable for the  new  BDF and IMEX   schemes of second to fourth order.
\subsection{Notations and a key lemma}

To simplify the presentations, we introduce the following notations:
\begin{equation}\label{IMEX234coeff}
\quad A_k^{\beta}(\phi^{i})=\sum_{q=0}^{k}a_{k,q}(\beta)\phi^{i-k+q},\,\quad B_k^{\beta}(\phi^{i})=\sum_{q=0}^{k-1}b_{k,q}(\beta)\phi^{i-k+1+q},\,\quad C_k^{\beta}(\phi^{i})=\sum_{q=0}^{k-1}c_{k,q}(\beta) \phi^{i-k+1+q}.
\end{equation}
with $a_{k,q}, b_{k,q}, c_{k,q}$ defined in \eqref{IMEX2coeff}, \eqref{IMEX3coeff} and \eqref{IMEX4coeff}.
 We also consider the characteristic polynomials of the new  BDF and IMEX  schemes  \eqref{genBDF} and \eqref{genIMEX}:
\begin{subequations}\label{poly}
	\begin{align}
		& \tilde{A}_k^{\beta}(\zeta)=\sum_{q=0}^{k}a_{k,q}(\beta)\zeta^q,\quad k=2,3,4;\\
		& \tilde{C}_k^{\beta}(\zeta)=\sum_{q=0}^{k-1}c_{k,q}(\beta)\zeta^q,\quad k=2,3,4.
%		& \tilde{D}_k^{\beta}(\zeta)=\sum_{q=0}^{k-1}d_{k,q}(\beta)\zeta^q,\quad k=2,3,4.
	\end{align}
\end{subequations}

We first recall the following result from Dahlquist's G-stability theory \cite{dahlquist1978g} which plays a key role in establishing energy stability of multistep methods.
\begin{lemma}\label{Gstability}
	Let $\alpha(\zeta)=\alpha_k\zeta^k+...+\alpha_0$ and $\mu(\zeta)=\mu_k\zeta^k+...+\mu_0$ be polynomials of degree at most $k$ (and at least one of them of degree $k$) that have no common divisors. Let $(\cdot, \cdot)$ be an inner product with associated norm $|\cdot|$. If
	\begin{equation}\label{alphamu}
		\text{Re}\frac{\alpha(\zeta)}{\mu(\zeta)}>0 \quad \text{for} \, |\zeta|>1,
	\end{equation}
	then there exists a symmetric positive definite matrix $G=(g_{ij}) \in \mathbb{R}^{k\times k}$ and real $\delta_0,...,\delta_k$ such that for $\upsilon^0,...,\upsilon^k$ in the inner product space,
	\begin{equation}
		\big(\sum_{i=0}^k \alpha_i \upsilon^i, \sum_{j=0}^k \mu_j \upsilon^j \big)=\sum_{i,j=1}^{k}g_{ij}(\upsilon^i,\upsilon^j)-\sum_{i,j=1}^{k}g_{ij}(\upsilon^{i-1},\upsilon^{j-1})+\big |\sum_{i=0}^k\delta_i \upsilon^i \big|^2.
	\end{equation}
	
\end{lemma}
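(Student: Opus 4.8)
The plan is to recognize the statement as Dahlquist's classical G-stability theorem and to prove it by reducing the claimed operator identity to a purely algebraic factorization problem for the coefficient polynomials $\alpha,\mu$, which is then resolved by a Fej\'er--Riesz factorization together with a Lyapunov/storage argument. First, since $(\cdot,\cdot)$ is symmetric and bilinear, both sides are determined solely by the Gram entries $(\upsilon^i,\upsilon^j)$; as the $\upsilon^i$ range over an arbitrary inner product space every admissible Gram matrix is attainable, so the identity is equivalent to one of symmetric $(k+1)\times(k+1)$ matrices. Writing $m_{ij}=\tfrac12(\alpha_i\mu_j+\alpha_j\mu_i)$ and introducing the two-variable generating polynomials $\hat M(\zeta,w)=\tfrac12[\alpha(\zeta)\mu(w)+\alpha(w)\mu(\zeta)]$, $P(\zeta,w)=\sum_{i,j=1}^k g_{ij}\zeta^iw^j$, and $d(\zeta)=\sum_{i=0}^k\delta_i\zeta^i$, the target collapses to the single functional equation
\[
\tfrac12\big[\alpha(\zeta)\mu(w)+\alpha(w)\mu(\zeta)\big]=\Big(1-\tfrac{1}{\zeta w}\Big)P(\zeta,w)+d(\zeta)d(w),
\]
the prefactor $1-(\zeta w)^{-1}$ encoding the telescoping $G$-difference. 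The whole task is to produce a real $d$ of degree at most $k$ and a symmetric positive definite $G=(g_{ij})_{i,j=1}^k$ satisfying this.

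Second, I would pin down $d$ by setting $w=\zeta^{-1}$, which annihilates the $G$-term and leaves
\[
\tfrac12\big[\alpha(\zeta)\mu(\zeta^{-1})+\alpha(\zeta^{-1})\mu(\zeta)\big]=d(\zeta)d(\zeta^{-1}).
\]
On $|\zeta|=1$ the left side equals $|\mu(\zeta)|^2\,\mathrm{Re}\,\tfrac{\alpha(\zeta)}{\mu(\zeta)}$, which is nonnegative by \eqref{alphamu} (passing to the limit from $|\zeta|>1$). Thus the left side is a nonnegative Laurent polynomial with real palindromic coefficients, and the Fej\'er--Riesz theorem furnishes a real polynomial $d$ of degree $\le k$ with $d(\zeta)d(\zeta^{-1})$ equal to it, defining $\delta_0,\dots,\delta_k$.

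Third, with $d$ fixed, set $Q(\zeta,w)=\hat M(\zeta,w)-d(\zeta)d(w)$, a symmetric polynomial of bidegree at most $(k,k)$ that, by the previous step, vanishes on the irreducible curve $\zeta w=1$. Hence $\zeta w-1$ divides $Q$, and $P:=\zeta w\,Q/(\zeta w-1)=Q/(1-(\zeta w)^{-1})$ is a symmetric polynomial whose monomials $\zeta^iw^j$ all satisfy $1\le i,j\le k$ (the factor $\zeta w$ forces the lower bound, the original bidegree the upper). Its coefficients define the symmetric $k\times k$ matrix $G$, and the functional equation, hence the claimed identity, holds by construction.

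\textbf{Main obstacle (positive definiteness of $G$).} Everything so far used the hypothesis only on $|\zeta|=1$; strict definiteness needs its full force on $|\zeta|>1$ together with the no-common-divisor condition. I would argue via the storage-function interpretation: telescoping the already-established scalar identity over a finitely supported sequence exhibits $V\mapsto V^{\top}GV$ as a storage function for the resulting dissipation inequality, so it dominates the manifestly nonnegative available storage and $G\succeq 0$; a nontrivial null vector of $G$ would then force $\alpha$ and $\mu$ to share a root, contradicting coprimeness, whence $G\succ 0$. Equivalently one may invoke the discrete positive-real (Kalman--Yakubovich--Popov) lemma, for which \eqref{alphamu} is precisely the positive-real condition, to obtain a symmetric positive definite Lyapunov solution directly. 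I expect this positive-definiteness step, together with the subordinate care required when $\mu$ has zeros on the unit circle (poles of $\alpha/\mu$), to be the genuinely delicate part, the factorization and division being essentially bookkeeping once Fej\'er--Riesz is in hand.
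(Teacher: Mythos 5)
First, a point of reference: the paper does not prove this lemma at all --- it is quoted as a known result from Dahlquist's G-stability theory \cite{dahlquist1978g} --- so your proposal has to be measured against the classical proof rather than anything in the paper. Your first three steps do reconstruct that proof's skeleton correctly: the reduction to an identity between two-variable polynomials via Gram matrices, the determination of $d$ by restricting to $\zeta w=1$ and applying Fej\'er--Riesz (the restriction is a real, palindromic, nonnegative Laurent polynomial on the circle, so a real factor $d$ of degree at most $k$ exists), and the divisibility of $Q$ by $\zeta w-1$, which produces a real symmetric $G$ of the right support. Up to that point the proposal is sound bookkeeping, as you say.

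The genuine gap is exactly where you locate it, but your resolution of it does not work. The inequality ``storage function $\ge$ available storage $\ge 0$'' is circular here: in Willems' theory it is proved by writing $S(x_0)\ge S(x_T)-\sum_{n<T}s_n$ along a trajectory and then discarding $S(x_T)$, which requires the very nonnegativity of $S$ you are trying to establish. A quadratic form satisfying only the dissipation identity need not be positive semidefinite on abstract grounds; indeed, telescoping your identity along a trajectory driven to rest gives $V_0^\top G V_0=\sum_{n}\big(\sum_i\delta_i v^{n+i}\big)^2-\sum_n s_n$, and nothing so far forces the right-hand side to be nonnegative. What every correct proof actually uses, and what is absent from your proposal, are the root-location consequences of \eqref{alphamu} combined with coprimality: if $\mu(\zeta_0)=0$ with $|\zeta_0|>1$, then $\alpha(\zeta_0)\neq 0$, so $\alpha/\mu$ has a pole at $\zeta_0$, near which its real part takes negative values, contradicting \eqref{alphamu}; hence all zeros of $\mu$ (and, by the same token, of $\alpha$) lie in the closed unit disk. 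This is the input that permits a noncircular energy argument: along solutions of $\sum_j\mu_j v^{n+j}=0$ the supply vanishes, so $V_n^\top GV_n$ is nonincreasing, and when the zeros of $\mu$ lie strictly inside the disk one gets $V_n\to 0$ and therefore $V_0^\top GV_0=\sum_n\big(\sum_i\delta_i v^{n+i}\big)^2\ge 0$; zeros on the unit circle, and the upgrade from semidefiniteness to strict definiteness, require further delicate work --- note in particular that a kernel vector of $G$ yields a nonzero common solution of $\mu(E)v=0$ and $d(E)v=0$, i.e.\ a common factor of $\mu$ and $d$, not of $\mu$ and $\alpha$, and $d$ genuinely shares every unit-circle zero of $\mu$ (since $|d|^2=\mathrm{Re}(\alpha\bar\mu)$ vanishes there), so coprimality of $\alpha$ and $\mu$ does not finish the job as directly as you claim. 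Finally, your fallback of invoking the discrete positive-real/KYP lemma is a substitution rather than a repair: that lemma, with the minimality hypotheses needed for strict definiteness, is essentially equivalent in depth to the statement being proved, and its hypotheses again include the pole-location condition you never established.
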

It is clear from the above Lemma that the key for establishing the energy stability of \eqref{genIMEX}
is to find a suitable multiplier {\color{black} $\mu(\zeta)=\mu_k\zeta^k+...+\mu_0$} such that \eqref{alphamu} is satisfied with $\alpha(\zeta)= \tilde{A}_k^{\beta}(\zeta)$.
To this end, we first  split $B_k^{\beta}(\phi^{n+1})$ into two parts:
\begin{equation}\label{splitB}
B_k^{\beta}(\phi^{n+1})=\eta_k(\beta)C_k^{\beta}(\phi^{n+1})+D_k^{\beta}(\phi^{n+1}),\quad k=2,3,4,
\end{equation}
with
\begin{equation}\label{eta}
\eta_2(\beta)=\frac{\beta-1}{\beta},\quad \eta_3(\beta)=\frac{\beta-1}{\beta+1},\quad \eta_4(\beta)=\frac{\beta-1}{\beta+3},\quad \beta \ge 1,
\end{equation}
and $D_k^{\beta}$ can be written as
\begin{equation}
	D^{\beta}_k(\phi^{n+1})=\sum_{q=0}^{k-1}d_{k,q}(\beta)\phi^{n+2-k+q},\quad k=2,3,4,
\end{equation}
with
\begin{subequations}
	\begin{align}
		& d_{2,1}(\beta)=\frac{1}{\beta},\,d_{2,0}(\beta)=0,\\
		& d_{3,2}(\beta)=1,\,d_{3,1}(\beta)=\frac{1-\beta}{1+\beta},\,d_{3,0}(\beta)=0,\\
		& d_{4,3}(\beta)=\frac{\beta^2}{6}+\frac{\beta}{2}+\frac{1}{3},\,d_{4,2}(\beta)=-(\frac{\beta^2}{2}+\frac{\beta}{2}-1),\,d_{4,1}(\beta)=\frac{\beta(\beta-1)}{2},\,d_{4,0}(\beta)=-\frac{\beta(\beta^2-1)}{6(\beta+3)}.
	\end{align}
\end{subequations}
\begin{comment}
\begin{subequations}\label{Di}
\begin{align}
& D_2^{\beta}(\phi^{n+1})=\frac{1}{\beta}\phi^{n+1},\\
& D_3^{\beta}(\phi^{n+1})=\phi^{n+1}+\frac{1-\beta}{1+\beta}\phi^n,\\
&D_4^{\beta}(\phi^{n+1})=(\frac{\beta^2}{6}+\frac{\beta}{2}+\frac{1}{3})\phi^{n+1}-(\frac{\beta^2}{2}+\frac{\beta}{2}-1)\phi^{n}+\frac{\beta(\beta-1)}{2}\phi^{n-1}-\frac{\beta(\beta^2-1)}{6(\beta+3)}\phi^{n-2}.
\end{align}
\end{subequations}
Similar to $A_i^{\beta}$, $B_i^{\beta}$ and $C_i^{\beta}$, we introduce $d_{k,q}$ to denote the coefficients appears in \eqref{Di}:

and hence \eqref{Di} can also be written as
\begin{equation}
	D^{\beta}_k(\phi^{n+1})=\sum_{q=0}^{k-1}d_{k,q}(\beta)\phi^{n+2-k+q},\quad k=2,3,4.
\end{equation}
\end{comment}
We also define
\begin{align}
		 \tilde{D}_k^{\beta}(\zeta)=\sum_{q=0}^{k-1}d_{k,q}(\beta)\zeta^q,\quad k=2,3,4.
\end{align}
\begin{rem} The choices of $\eta_i(\beta)$ are not unique. We choose $\eta_2(\beta),\,\eta_3(\beta)$ defined in \eqref{eta} to make $D_2^{\beta},\,D_3^{\beta}$ as simple as possible and the choice of $\eta_4(\beta)$  defined in \eqref{eta} allows us to prove \eqref{IMEX4D} in the next subsection.
\end{rem}

%{\color{blue} For the analysis of the NS equation, we only need to choose one suitable $\eta_i(\beta)>\frac{\sqrt{2}}{2}$. }

\subsection{A uniform multiplier}

Note that in \cite{nevanlinna1981}, it was shown that there exists a multiplier in the form of $\phi^{n+1}-\tilde{\eta}_k \phi^n$ with $\tilde{\eta}_k \ge 0$ for the usual BDF schemes of order 2 to 5. Surprisingly, we can find a uniform multiplier for the new BDF and IMEX  schemes of order 2 to 4. More precisely, we have the following results.

\begin{theorem}\label{Thm1}
Given  $\beta \ge 1$, then
\begin{equation}
\rm{gcd}\big(\tilde{A}^{\beta}_k(\zeta),\zeta\tilde{C}^{\beta}_k(\zeta)\big)=\rm{gcd}\big(\tilde{D}^{\beta}_k(\zeta),\tilde{C}^{\beta}_k(\zeta)\big)=1, \, k=2, 3, 4,
\end{equation}
 i.e. they have no common divisor, and
\begin{equation}\label{IMEXA}
\text{Re}\frac{\tilde{A}^{\beta}_k(\zeta)}{\zeta\tilde{C}^{\beta}_k(\zeta)}>0, \quad \text{for}\, |\zeta|>1,\,  k=2,3,4.
\end{equation}
Moreover, we also have
\begin{equation}\label{IMEX23D}
\text{Re}\frac{\tilde{D}_k^{\beta}(\zeta)}{\tilde{C}_k^{\beta}(\zeta)}>0, \quad \text{for}\, |\zeta|>1,\, k=2,3;
\end{equation}
and finally if $\beta \ge 2$, then we also have
\begin{equation}\label{IMEX4D}
\text{Re}\frac{\tilde{D}^{\beta}_4(\zeta)}{\tilde{C}_4^{\beta}(\zeta)}>0, \quad \text{for}\, |\zeta|>1.
\end{equation}
\end{theorem}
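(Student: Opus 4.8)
The plan is to handle all four positivity statements by one uniform mechanism—transfer to the unit disk followed by the minimum principle for harmonic functions—so that everything is reduced to a short list of one‑variable polynomial inequalities readable off from the explicit coefficients in \eqref{IMEX2coeff}--\eqref{IMEX4coeff}. I would first dispose of the algebraic (gcd) claims. Since $\zeta$ divides $\zeta\tilde{C}_k^{\beta}(\zeta)$ while $\tilde{A}_k^{\beta}(0)=a_{k,0}(\beta)\neq 0$ for $\beta\ge 1$ (immediate from the explicit $a_{k,0}$), any common divisor of $\tilde{A}_k^{\beta}$ and $\zeta\tilde{C}_k^{\beta}$ must already divide $\tilde{C}_k^{\beta}$, so $\gcd(\tilde{A}_k^{\beta},\zeta\tilde{C}_k^{\beta})=\gcd(\tilde{A}_k^{\beta},\tilde{C}_k^{\beta})$. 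Coprimality of $\tilde{A}_k^{\beta},\tilde{C}_k^{\beta}$ (and of $\tilde{D}_k^{\beta},\tilde{C}_k^{\beta}$) is then equivalent to nonvanishing of the respective resultant, which is an explicit polynomial in $\beta$ of low degree to be checked nonzero for $\beta\ge 1$.

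For the positivity statements I would first prove the auxiliary fact that $\tilde{C}_k^{\beta}$ has all its roots strictly inside the unit disk for every $\beta\ge 1$ and $k=2,3,4$: for $k=2$ this is the single real root $\beta/(\beta+1)\in(0,1)$, and for $k=3,4$ it follows from the Schur--Cohn (Jury) criterion applied to the explicit quadratic and cubic, reducing to a handful of inequalities in $\beta$. Granting this, set $w=1/\zeta$, so that $\{|\zeta|>1\}$ corresponds to the punctured disk $\{0<|w|<1\}$; writing $\hat{A},\hat{C},\hat{D}$ for the corresponding reversed (reciprocal) polynomials, one has $\tilde{A}_k^{\beta}/(\zeta\tilde{C}_k^{\beta})=\hat{A}/\hat{C}$ and $\tilde{D}_k^{\beta}/\tilde{C}_k^{\beta}=\hat{D}/\hat{C}$ as functions of $w$, with the same $\hat{C}$. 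The root‑location fact means $\hat{C}$ has no zero in $\{|w|\le 1\}$, so each ratio is analytic on the closed disk and its real part is harmonic there; by the strong minimum principle it then suffices to show the real part is nonnegative on the boundary $|w|=1$, i.e. on $\zeta=e^{i\theta}$, and strict positivity in the open exterior follows automatically since the ratios are nonconstant rational functions (the value at $w=0$, namely $\zeta=\infty$, is the manifestly positive leading‑coefficient ratio).

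On the boundary, $\text{Re}\,\frac{\tilde{A}}{\zeta\tilde{C}}=\frac{\text{Re}(\tilde{A}\,\overline{\zeta\tilde{C}})}{|\zeta\tilde{C}|^2}$ and $\text{Re}\,\frac{\tilde{D}}{\tilde{C}}=\frac{\text{Re}(\tilde{D}\,\overline{\tilde{C}})}{|\tilde{C}|^2}$, with strictly positive denominators, so the sign is governed by the numerators. Substituting $\zeta=e^{i\theta}$ expands each numerator as a cosine polynomial in $\theta$, which I would convert via $x=\cos\theta$ into an explicit bivariate polynomial $R(x,\beta)$; the task becomes showing $R(x,\beta)\ge 0$ on $x\in[-1,1]$ for $\beta\ge 1$ (and for $\beta\ge 2$ in the last case). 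For the cases \eqref{IMEXA} with $k=2,3,4$ and \eqref{IMEX23D} with $k=2,3$ I expect the resulting $R$ to be nonnegative by direct factoring or by completing squares, since the corresponding schemes are stable for all $\beta\ge 1$.

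The main obstacle is \eqref{IMEX4D}: here the threshold $\beta\ge 2$ must emerge from the analysis, meaning the cubic‑in‑$x$ polynomial $R_4^{D}(x,\beta)$ is nonnegative on $[-1,1]$ precisely when $\beta\ge 2$. A direct check shows $R_4^{D}$ is already strictly positive at $x=\pm1$ (equivalently at $\zeta=\pm1$) when $\beta=2$, so the binding minimum occurs at an interior point $x_*\in(-1,1)$, making the estimate genuinely two‑dimensional. I expect this step to require locating the minimizer $x_*$ as a function of $\beta$ (via $\partial_x R_4^{D}=0$) and exhibiting a factorization or sum‑of‑squares certificate in which the factor $(\beta-2)$ enters with the correct sign; identifying that certificate and verifying it is the delicate and least routine part of the proof, and it is exactly what forces the restriction $\beta\ge 2$ in \eqref{IMEX4D}.
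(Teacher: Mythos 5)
Your framework coincides with the paper's: coprimality via resultants together with $\tilde{A}_k^{\beta}(0)=a_{k,0}\neq 0$, locating all zeros of $\tilde{C}_k^{\beta}$ strictly inside the unit disk, passage to the boundary by the maximum/minimum principle for harmonic functions (your inversion $w=1/\zeta$ is a cosmetic repackaging of the paper's direct argument on the exterior domain, using the positive limit $a_{k,k}/c_{k,k-1}$ at infinity), and conversion of the boundary condition into a polynomial inequality in $y=\cos\theta$ on $[-1,1]$. Your Schur--Cohn route for the root location is a legitimate variant of the paper's argument (which uses monotonicity of $\tilde{C}_4^{\beta}$ on the real axis plus Vieta's formula for the complex pair, and Vieta directly for $k=3$). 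Up to this point the proposal is sound.

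The genuine gap is that the analytic core of the theorem --- the nonnegativity of the resulting polynomials, above all for $k=4$ --- is not proved but only predicted. Already for \eqref{IMEXA} with $k=4$, the cubic $f_4(y)$ does not yield to ``direct factoring or completing squares'': the paper must check $f_4(\pm 1)>0$ and then, at any interior critical point $y_0$, use $f_4'(y_0)=0$ to reduce $3f_4(y_0)$ to a quadratic $g_4(y_0)$ whose positivity on $[-1,1]$ follows from sign and discriminant computations in $\beta$. For \eqref{IMEX4D} the chain is longer and genuinely delicate: after the same critical-point reduction one must eliminate the quadratic term a second time to reach a linear function $p(y)$ with root $y^*$, bound the smallest critical point $y_0$ of $h_4'$ from below by replacing the discriminant $\Delta_h$ with a strictly larger perfect square $\Delta_h^*$, and finally verify $y_0-y^*>0$, which is exactly where $\beta\ge 2$ enters (see \eqref{4Dp}). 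This is nontrivial precisely because, as the paper notes, $y^*$ can be made arbitrarily close to $1$ by increasing $\beta$ while genuine critical points exist in $(y^*,1)$, so crude bounds fail; your proposal defers this entire certificate to ``I expect this step to require\dots'', which is the part of the proof that actually has to be done. Moreover, your premise that $R_4^{D}(x,\beta)\ge 0$ on $[-1,1]$ ``precisely when $\beta\ge 2$'' is false: $\beta\ge 2$ is only a clean sufficient condition, and the paper's remark observes that the same estimates go through whenever $\beta>1.6$ (positivity fails at $\beta=1$, e.g. $h_4(0.2)<0$). Consequently, a strategy that hunts for a factorization or sum-of-squares certificate containing the factor $(\beta-2)$ ``with the correct sign'' is searching for something that does not exist, and that step of your plan would fail as stated.
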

\begin{proof}
The proof follows the basic process in \cite{BDF6}. We will  provide the proof for the case $k=4$ in detail as it includes some technical estimations and then we will point out the key steps for the cases $k=2,3$, which are easier to handle. To simplify the notation, we often omit the dependence on $\beta$ for the coefficients $a_{k,q}(\beta),c_{k,q}(\beta),d_{k,q}(\beta)$, i.e., we only write them as $a_{k,q}, c_{k,q}, d_{k,q}$.

\textbf{Case I: $k=4$.} Firstly, we show $\rm{gcd}\big(\tilde{A}_4^{\beta}(\zeta),\zeta\tilde{C}_4^{\beta}(\zeta)\big)=1$  by using the \textbf{Sylvester Resultant}  \cite{akritas1993sylvester} as follows. The Sylvester matrix  \cite{akritas1993sylvester} of $\tilde{A}_4^{\beta}(\zeta)$ and $\tilde{C}_4^{\beta}(\zeta)$ is
\begin{equation}
Sly(\tilde{A}_4^{\beta},\tilde{C}_4^{\beta})={\left(
\begin{array}{ccccccc}
a_{4,4}&a_{4,3}&a_{4,2}&a_{4,1}&a_{4,0}& 0 &0\\
0   &a_{4,4}&a_{4,3}&a_{4,2}&a_{4,1}&a_{4,0}&0\\
0&0&a_{4,4}&a_{4,3}&a_{4,2}&a_{4,1}&a_{4,0}\\
c_{4,3}&c_{4,2}&c_{4,1}&c_{4,0}&0&0&0\\
0&c_{4,3}&c_{4,2}&c_{4,1}&c_{4,0}&0&0\\
0&0&c_{4,3}&c_{4,2}&c_{4,1}&c_{4,0}&0\\
0&0&0&c_{4,3}&c_{4,2}&c_{4,1}&c_{4,0}
\end{array}
\right)}.
\end{equation}
It is easy to verify that  its determinant is
\begin{equation}
\text{det} Sly(\tilde{A}^{\beta}_4,\tilde{C}^{\beta}_4)=- \frac{1}{5184}\big(18\beta^6 + 144\beta^5 + 426\beta^4 + 566\beta^3 + 321\beta^2 + 55\beta +3\big) \neq 0,\;\text{ for } \beta\ge 1,
\end{equation}
which implies that $\rm{gcd}\big(\tilde{A}_4^{\beta}(\zeta), \tilde{C}_4^{\beta}(\zeta)\big)=1$. Combined with $\tilde{A}_4^{\beta}(0)=a_{4,0} \neq 0$,  it also implies that $\tilde{A}_{4}^{\beta}(\zeta)$ and $\zeta \tilde{C}_4^{\beta}(\zeta)$ have no common divisor.

 Next, we show $\frac{\tilde{A}_4^{\beta}(\zeta)}{\zeta \tilde{C}_4^{\beta}(\zeta)}$ is holomorphic outside the unit disk in the complex plane. To this end, it  suffices to show that  all three zeros of $\tilde{C}_4^{\beta}(\zeta)$ are inside the unit disk. Note that
\begin{equation}
\frac{d \tilde{C}_4^{\beta}}{d x}(x)=3c_{4,3}x^2+2c_{4,2}x+c_{4,1},\,
\end{equation}
with
\begin{equation}
 c_{4,3}=\frac{\beta^3+6\beta^2+11\beta+6}{6}>0,\, \Delta_4:=4c_{4,2}^2-12c_{4,3}c_{4,1}=-\beta(\beta + 2)(\beta + 3)^2<0,
\end{equation}
which means $\tilde{C}_4(x)$ is monotonically increasing in the real axis. Note also that
\begin{equation}\label{eq1}
\tilde{C}_4^{\beta}(0)=c_{4,0}=-\frac{\beta^3+3\beta^2+2\beta}{6}<0,\quad \tilde{C}^{\beta}_4(1)=c_{4,3}+c_{4,2}+c_{4,1}+c_{4,0}=1.
\end{equation}
Therefore, $\tilde{C}_4^{\beta}(\zeta)=0$ has exactly one real root, denoted as $x_1$, and two complex roots, denoted as $z_2,\, z_3=\bar{z}_2$, in the complex plane.  Next, we denote
\begin{equation}
x_0:=\frac{-c_{4,0}}{c_{4,3}-1}=\frac{\beta^2+3\beta+2}{\beta^2+6\beta+11}.
\end{equation}
Then we can find with $\beta \ge 1$,
\begin{equation}\label{eq2}
\tilde{C}_4^{\beta}(x_0)=-\frac{2\beta^6+27\beta^5+141\beta^4+351\beta^3+405\beta^2+162\beta-8}{(\beta^2+6\beta+11)^3}<0.
\end{equation}
Combining \eqref{eq1} and \eqref{eq2}, we have $x_0<x_1<1$. On the other hand, by Vieta's formulae, we have
\begin{equation}
x_1z_2z_3=x_1|z_2|^2=-\frac{c_{4,0}}{c_{4,3}},\quad \text{then} \,\, |z_2|^2=\frac{1}{x_1} \frac{-c_{4,0}}{c_{4,3}}<\frac{1}{x_0} \frac{-c_{4,0}}{c_{4,3}}=\frac{c_{4,3}-1}{c_{4,3}}<1.
\end{equation}
As a result, we have $|x_1|, |z_2|, |z_3| <1$ and hence $\frac{\tilde{A}_4^{\beta}(\zeta)}{\zeta \tilde{C}_4^{\beta}(\zeta)}$ and $\frac{\tilde{D}_4^{\beta}(\zeta)}{\tilde{C}_4^{\beta}(\zeta)}$ are holomorphic outside the unit disk.

On the other hand, we have
\begin{equation}
\lim_{|\zeta| \rightarrow \infty}\frac{\tilde{A}_4^{\beta}(\zeta)}{\zeta \tilde{C}_4^{\beta}(\zeta)}=\frac{a_{4,4}}{c_{4,3}}=\frac{2\beta^3+9\beta^2+11\beta+3}{2(\beta^3+6\beta^2+11\beta+6)}>0.
\end{equation}
Therefore, it follows from the maximum principle for harmonic functions, $\text{Re}\frac{\tilde{A}_4^{\beta}(\zeta)}{\zeta\tilde{C}_4^{\beta}(\zeta)}>0,\,\forall |\zeta|>1 $ is equivalent to
\begin{equation}
\text{Re} \frac{\tilde{A}_4^{\beta}(\zeta)}{\zeta \tilde{C}_4^{\beta}(\zeta)}\ge 0, \quad \forall \zeta \in \mathbb{S}^1,
\end{equation}
with $\mathbb{S}^1$ being the unit circle in the complex plane, and  which is equivalent to
\begin{equation}\label{A11}
Re[\tilde{A}_4^{\beta}(e^{i\theta})e^{-i\theta}\tilde{C}_4^{\beta}(e^{-i\theta})] \ge 0, \quad \theta \in [0, 2\pi).
\end{equation}
Letting $y:=\cos(\theta)$ and using the trigonometric identities
\begin{equation}
\cos(2\theta)=2y^2-1,\, \cos(3\theta)=4y^3-3y,\, \sin(2\theta)=2y\sin(\theta),\,\sin(3\theta)=(4y^2-1)\sin(\theta),
\end{equation}
 we find
\begin{equation}\label{eq3}
\begin{split}
\tilde{C}_4^{\beta}(e^{-i\theta})&=c_{4,3}\cos(3\theta)+c_{4,2}\cos(2\theta)+c_{4,1}\cos(\theta)+c_{4,0}-i[c_{4,3}\sin(3\theta)+c_{4,2}\sin(2\theta)+c_{4,1}\sin(\theta)]\\
&=c_{4,3}(4y^3-3y)+c_{4,2}(2y^2-1)+c_{4,1}y+c_{4,0}-i[c_{4,3}(4y^2-1)+2c_{4,2}y+c_{4,1}]\sin(\theta),
\end{split}
\end{equation}
and
\begin{equation}\label{eq4}
\begin{split}
\tilde{A}_4^{\beta}(e^{i\theta})e^{-i\theta}&=a_{4,4}e^{3i\theta}+a_{4,3}e^{2i\theta}+a_{4,2}e^{i\theta}+a_{4,1}+a_{4,0}e^{-i\theta}\\
&=a_{4,4}\cos(3\theta)+a_{4,3}\cos(2\theta)+a_{4,2}\cos(\theta)+a_{4,1}+a_{4,0}\cos(\theta)\\
&+i[a_{4,4}\sin(3\theta)+a_{4,3}\sin(2\theta)+a_{4,2}\sin(\theta)-a_{4,0}\sin(\theta)]\\
&=a_{4,4}(4y^3-3y)+a_{4,3}(2y^2-1)+(a_{4,2}+a_{4,0})y+a_{4,1}\\
& +i[a_{4,4}(4y^2-1)+2a_{4,3}y+a_{4,2}-a_{4,0}]\sin(\theta).
\end{split}
\end{equation}
It follows from \eqref{eq3}, \eqref{eq4} and  $\tilde{A}_4^{\beta}(1)=0$, $\sin^2(\theta)=1-y^2$ that
\begin{equation}
Re[\tilde{A}_4^{\beta}(e^{i\theta})e^{-i\theta}\tilde{C}_4^{\beta}(e^{-i\theta})]=\frac{1}{9}(1-y)(\omega_3(\beta)y^3+\omega_2(\beta)y^2+\omega_1(\beta)y+\omega_0(\beta))=:\frac{1}{9}(1-y)f_4(y),
\end{equation}
with
\begin{equation}\label{eq5}
\begin{split}
& f_4(y)=\omega_3(\beta)y^3+\omega_2(\beta)y^2+\omega_1(\beta)y+\omega_0(\beta),\\
& \omega_0(\beta)= 2\beta^6+15\beta^5+39\beta^4+39\beta^3+10\beta^2+15,\\
& \omega_1(\beta)=-6\beta^6-45\beta^5-117\beta^4-116\beta^3-21\beta^2+17\beta+9,\\
& \omega_2(\beta)=6\beta^6+45\beta^5+117\beta^4+115\beta^3+12\beta^2-34\beta-12,\\
& \omega_3(\beta)=-2\beta^6-15\beta^5-39\beta^4-38\beta^3-\beta^2+17\beta+6.
\end{split}
\end{equation}
In the following, we omit the dependence on $\beta$ for $\omega_i,\, i=0,1,2,3$.

It is clear that  \eqref{A11} is equivalent to
\begin{equation}\label{eq6}
f_4(y) \ge 0,\quad \forall y \in[-1,1].
\end{equation}
With $\omega_i$ defined in \eqref{eq5} and $\beta \ge 1$, we have
\begin{equation}\label{eq7}
\begin{split}
f_4(1)&=\omega_0+\omega_1+\omega_2+\omega_3=18>0,\\
f_4(-1)&=\omega_0-\omega_1+\omega_2-\omega_3=16\beta^6 + 120\beta^5 + 312\beta^4 + 308\beta^3 + 44\beta^2 - 68\beta - 12>0,
\end{split}
\end{equation}
and
\begin{equation}
f'_4(y)=3\omega_3y^2+2\omega_2y+\omega_1.
\end{equation}
If $f_4'(y)$ does not have zero in $[-1,1]$, then \eqref{eq7} implies \eqref{eq6}. Otherwise, supposing there exists $-1 \le y_0 \le 1$ such that $f_4'(y_0)=0$, we only need to show $f_4(y_0) \ge 0$.  Indeed, with $f_4'(y_0)=0$,  we have
\begin{equation}
3f_4(y_0)=3f_4(y_0)-y_0f_4'(y_0)=\omega_2 y_0^2+2\omega_1 y_0+3\omega_0.
\end{equation}
Denote
\begin{equation}
g_4(y):=\omega_2 y^2+2\omega_1 y+3\omega_0;
\end{equation}
then with $\beta \ge 1$, we have
\begin{small}\begin{equation}
\begin{split}
g_4(1)& =\omega_2+2\omega_1+3\omega_0=51>0,\\
g_4(-1) &= \omega_2-2\omega_1+3\omega_0= 24\beta^6 +180\beta^5 +468\beta^4 + 464\beta^3+ 84\beta^2 - 68\beta + 15>0,\\
\Delta_g& =4\omega_1^2-12\omega_2\omega_0=- 1220\beta^6 - 9108\beta^5 - 23408\beta^4 - 22212\beta^3 - 1076\beta^2 + 7344\beta + 2484<0,
\end{split}
\end{equation}\end{small}which means $g_4(y)>0, \forall y \in[-1,1]$. In particular, we have $f_4(y_0)=\frac{1}{3}g_4(y_0)>0$ which implies \eqref{eq6}, which in turn
implies  \eqref{A11}. Therefore, we proved \eqref{IMEXA} with $k=4$.

Next, we prove \eqref{IMEX4D} with $\beta \ge 2$. The procedure is similar to the proof of  \eqref{IMEXA} above. First, the Sylvester matrix of  $\tilde{D}_4^{\beta}(\zeta)$ and $\tilde{C}_4^{\beta}(\zeta)$:
\begin{equation}
Sly(\tilde{D}_4^{\beta},\tilde{C}_4^{\beta})={\left(
\begin{array}{cccccc}
d_{4,3}&d_{4,2}&d_{4,1}&d_{4,0}& 0 &0\\
0   &d_{4,3}&d_{4,2}&d_{4,1}&d_{4,0}&0\\
0&0&d_{4,3}&d_{4,2}&d_{4,1}&d_{4,0}\\
c_{4,3}&c_{4,2}&c_{4,1}&c_{4,0}&0&0\\
0&c_{4,3}&c_{4,2}&c_{4,1}&c_{4,0}&0\\
0&0&c_{4,3}&c_{4,2}&c_{4,1}&c_{4,0}
\end{array}
\right)},
\end{equation}
and its determinant is
\begin{equation}\label{slyD4}
\text{det}\, Sly(\tilde{D}_4^{\beta},\tilde{C}_4^{\beta})=-\frac{\beta^2(\beta^2 + 3\beta + 2)^2}{36} <0
\end{equation}
which implies $\tilde{D}_4^{\beta}(\zeta)$ and $\tilde{C}_4^{\beta}(\zeta)$ have no common divisor. Since we have shown in the above that $\frac{\tilde{D}_4^{\beta}(\zeta)}{\tilde{C}_4^{\beta}(\zeta)}$ is holomorphic outside the unit disk, following the same process as above, we have that \eqref{IMEX4D} is equivalent to:
\begin{equation}\label{A22}
h_4(y)=\alpha_{3}(\beta)y^3+\alpha_{2}(\beta)y^2+\alpha_{1}(\beta)y+\alpha_{0}(\beta) \ge 0,\, \forall y \in [-1,1],
\end{equation}
with
\begin{small}
\begin{equation}
\begin{split}
&\alpha_{0}(\beta)=\frac{1}{9(\beta+3)}(2\beta^6+15\beta^5+35\beta^4+15\beta^3-37\beta^2-39\beta+9),\\
&\alpha_{1}(\beta)=- \frac{2\beta^5}{3} - 3\beta^4 - \frac{10\beta^3}{3} + \beta^2 + 2\beta + 1,\\
&\alpha_{2}(\beta)=\frac{1}{3}(\beta(2\beta^4 + 9\beta^3 + 12\beta^2+3\beta - 2)),\\
&\alpha_{3}(\beta)=-\frac{1}{9}(\beta(\beta + 1)^2(2\beta^2 +  5\beta +2)).
\end{split}
\end{equation}
\end{small}
In the following, we omit the dependence on $\beta$ for $\alpha_i, i=0,1,2,3$. Hence, we have
\begin{small}
\begin{equation}\label{eq8}
\begin{split}
&h_4(-1)=-\alpha_{3}+\alpha_{2}-\alpha_{1}+\alpha_{0}=\frac{2}{9(\beta+3)}(8\beta^6+60\beta^5+152\beta^4+132\beta^3-16\beta^2-57\beta-9)>0,\\
&h_4(1)=\alpha_{3}+\alpha_{2}+\alpha_{1}+\alpha_{0}=\frac{4}{\beta+3}>0,
\end{split}
\end{equation}
\end{small}
and
\begin{equation}\label{eq9}
h'_4(y)=3\alpha_3y^2+2\alpha_2y+\alpha_1.
\end{equation}
Similarly as before, if $h'_4(y)$ does not have zero in $[-1,1]$, then \eqref{eq8} implies \eqref{A22}. Suppose $-1 \le y_0 \le 1$ such that $h'_4(y_0)=0$, we only need to show $h_4(y_0) \ge 0$.

With $h_4'(y_0)=0$ and $\alpha_3 \ne 0$,  we have
\begin{equation}
\begin{split}
3h_4(y_0)=3h_4(y_0)-y_0h_4'(y_0)&=\alpha_2 y_0^2+2\alpha_1 y_0+3\alpha_0\\
&= \frac{\alpha_2}{3\alpha_3}h_4'(y_0)+(2\alpha_1-\frac{2\alpha_2^2}{3\alpha_3})y_0+3\alpha_0-\frac{\alpha_1\alpha_2}{3\alpha_3}\\
&= 0+(2\alpha_1-\frac{2\alpha_2^2}{3\alpha_3})y_0+3\alpha_0-\frac{\alpha_1\alpha_2}{3\alpha_3}.
\end{split}
\end{equation}
We define
\begin{equation}
p(y):=(2\alpha_1-\frac{2\alpha_2^2}{3\alpha_3})y+3\alpha_0-\frac{\alpha_1\alpha_2}{3\alpha_3}.
\end{equation}
Then $p(y^*)=0$ if we define $y^*$ as
\begin{equation}\label{eq10}
y^*:=\frac{\frac{\alpha_1\alpha_2}{3\alpha_3}-3\alpha_0}{2\alpha_1-\frac{2\alpha_2^2}{3\alpha_3}}=\frac{4\beta^4+30\beta^3+35\beta^2+3\beta}{4\beta^4+30\beta^3+71\beta^2+54\beta+9},
\end{equation}
and we also have
\begin{equation}
2\alpha_1-\frac{2\alpha_2^2}{3\alpha_3}=\frac{8\beta^2 + 28\beta + 6}{6\beta+3}>0.
\end{equation}
Therefore,  to prove \eqref{A22}, it  suffices to show $y_0\ge y^*$.
However, this is more complicated  as \eqref{eq10} implies that $y^*$ can be arbitrarily close to 1  by increasing $\beta$,  and meanwhile, there indeed exists $y^*<y_0<1$ such that $h_4'(y_0)=0$.

If follows from \eqref{eq9} that
\begin{equation}
y_0=\frac{-2\alpha_2 \pm \sqrt{\Delta_h}}{6\alpha_3},
\end{equation}
with
\begin{equation}
\Delta_h:=4\alpha_2^2-12\alpha_1\alpha_3=\frac{4\beta(\beta + 1)^2(4\beta^3 + 22\beta^2 + 31\beta +6)}{9}>0.
\end{equation}
We can estimate $\Delta_h$ as follows
\begin{equation}
\Delta_h<\Delta_h+\frac{4(\beta+ 1)^2(2\beta^3 + 5\beta^2 -6\beta)}{9}=\frac{4}{9}(\beta+1)^2(2\beta^2+6\beta)^2=:\Delta_h^*.
\end{equation}
To show $y_0 \ge y^*$, we only consider the smallest root of $h_4'(y)=0$. Since we have $\alpha_2>0$ and $\alpha_3<0$, the smallest root is
\begin{equation}\label{eq11}
y_0=\frac{-2\alpha_2 + \sqrt{\Delta_h}}{6\alpha_3}>\frac{-2\alpha_2 + \sqrt{\Delta_h^*}}{6\alpha_3}=\frac{2\beta^3 + 7\beta^2  + 3\beta - 8}{2\beta^3 + 7\beta^2 + 7\beta+2}.
\end{equation}
Finally, we can prove $y_0 \ge y^*$ as follows. It follows from \eqref{eq10} and \eqref{eq11} that
\begin{equation}
\begin{split}
y_0-y^*&>\frac{2\beta^3 + 7\beta^2  + 3\beta - 8}{2\beta^3 + 7\beta^2 + 7\beta+2}-\frac{4\beta^4+30\beta^3+35\beta^2+3\beta}{4\beta^4+30\beta^3+71\beta^2+54\beta+9}\\
&=\frac{ 56\beta^4 + 138\beta^3 - 95\beta^2 - 339\beta - 72}{8\beta^6 + 80\beta^5 + 300\beta^4 + 523\beta^3 + 430\beta^2 + 153\beta + 18},
\end{split}
\end{equation}
and given $\beta \ge 2$,
\begin{equation}\label{4Dp}
\begin{split}
56\beta^4 + 138\beta^3 - 95\beta^2 - 339\beta - 72 & \ge 56\times 2^3\beta+138\times 2\beta^2- 95\beta^2 - 339\beta - 72\\
&=109\beta+181\beta^2-72>0.
\end{split}
\end{equation}

Therefore, we have $y_0 \ge y^*$.  Hence \eqref{IMEX4D} is proved for $\beta \ge 2$.

For the case $k=2$ and $3$, we can prove \eqref{IMEXA} and \eqref{IMEX23D} by the same process as above, so we only point out some related facts below, which are sufficient to complete the proof.

\textbf{Case II: $k=2$.}
\begin{itemize}
\item \text{det}$\,Sly(\tilde{A}^{\beta}_2,\tilde{C}^{\beta}_2)=-\frac{1}{2} \neq 0$, \text{det}$\,Sly(\tilde{D}^{\beta}_2,\tilde{C}^{\beta}_2)=-1\neq 0,\,\tilde{A}^{\beta}_2(0) \neq 0$.
\item The only zero of $\tilde{C}_2^{\beta}(\zeta)$ is $\frac{\beta}{1+\beta}<1$, which means $\frac{\tilde{A}^{\beta}_2(\zeta)}{\zeta \tilde{C}^{\beta}_2(\zeta)}$ and $\frac{\tilde{D}^{\beta}_2(\zeta)}{\tilde{C}^{\beta}_2(\zeta)}$ are holomorphic outside the unit disk.
\item  For $k=2$, \eqref{IMEXA} is equivalent to
\begin{equation}
f_2(y)=(-2\beta^2-\beta+1)y+2\beta^2+\beta+1 \ge 0,\quad \forall y \in [-1,1],
\end{equation}
which is true since $f_2(y)$ is monotonically decreasing and $f_2(1)=2$.
\item For $k=2$, \eqref{IMEX23D} is equivalent to
\begin{equation}
h_3(y)=-y+1+\frac{1}{\beta} \ge 0, \quad \forall y \in [-1,1],
\end{equation}
which is obviously true.
\end{itemize}

\textbf{Case III: $k=3$.}
\begin{itemize}
\item \text{det}$\,Sly(\tilde{A}_3^{\beta},\tilde{C}_3^{\beta})=\frac{\beta^2}{8} + \frac{5\beta}{24} + \frac{1}{36}\neq 0$, \text{det}$\,Sly(\tilde{D}_3^{\beta},\tilde{C}_3^{\beta})=\frac{\beta(\beta+1)}{2}\neq 0,\,\tilde{A}^{\beta}_3(0) \neq 0,\, \forall \beta \ge 1.$
\item  $\tilde{C}_3^{\beta}(\zeta)$ has two complex zeros $z_1$ and $z_2$ such that $|z_1|^2=|z_2|^2=\frac{\beta}{\beta+2}<1$, which means $\frac{\tilde{A}_3^{\beta}(\zeta)}{\zeta \tilde{C}_3^{\beta}(\zeta)}$ and $\frac{\tilde{D}_3^{\beta}(\zeta)}{\tilde{C}_3^{\beta}(\zeta)}$ are holomorphic outside the unit disk.
\item  For $k=3$, \eqref{IMEXA} is equivalent to
\begin{equation}\label{IMEXA3}
f_3(y)=\sigma_2(\beta)y^2+\sigma_1(\beta)y+\sigma_0 \ge 0,\quad \forall y \in[-1,1],
\end{equation}
with
\begin{subequations}
\begin{align}
& \sigma_2(\beta)=3\beta^4 + 9\beta^3 + 5\beta^2 - 3\beta - 2,\\
& \sigma_1(\beta)=- 6\beta^4 - 18\beta^3 - 13\beta^2 + \beta + 4,\\
& \sigma_0(\beta)=3\beta^4 + 9\beta^3 + 8\beta^2 + 2\beta + 4.
\end{align}
\end{subequations}
\eqref{IMEXA3} is true since $\sigma_2(\beta)>0$ for  $\beta \ge 1$ and
\begin{subequations}
\begin{align}
& f_3(-1)= 12\beta^4 + 36\beta^3 + 26\beta^2 - 2\beta - 2>0,\\
& f_3(1)=6,\\
& \Delta_3:=\sigma_1^2-4\sigma_0\sigma_2=- 63\beta^4 - 186\beta^3 - 95\beta^2 + 72\beta + 48<0.
\end{align}
\end{subequations}
\item For $k=3$, \eqref{IMEX23D} is equivalent to
\begin{equation}\label{IMEXD3}
h_3(y)=\mu_2(\beta)y^2+\mu_1(\beta)y+\mu_0(\beta) \ge 0, \quad \forall y \in [-1,1],
\end{equation}
with
\begin{subequations}
\begin{align}
& \mu_2(\beta)=\beta(\beta+1),\\
& \mu_1(\beta)=- 2\beta^2 - 2\beta+ 1,\\
& \mu_0(\beta)=\frac{\beta^3 + 2\beta^2 + 1}{\beta + 1}.
\end{align}
\end{subequations}
\eqref{IMEXD3} is true since $\mu_2(\beta)>0$ for  $\beta \ge 1$ and
\begin{subequations}
\begin{align}
& h_3(-1)= \frac{2\beta(2\beta^2 + 4\beta + 1)}{\beta + 1}>0,\\
& h_3(1)=\frac{2}{\beta + 1}>0,\\
& \Delta_3^*:=\mu_1^2-4\mu_0\mu_2=1-8\beta <0.
\end{align}
\end{subequations}
\end{itemize}

The proof for all the cases is completed.
\end{proof}
\begin{rem}
The restriction $\beta\ge 2$  is  a sufficient condition for \eqref{IMEX4D}, which comes from  \eqref{4Dp}.  One can easily show  that \eqref{slyD4} and \eqref{4Dp} are true whenever $\beta>1.6$. On the other hand,  \eqref{A22} is not true when $\beta=1$ as $h_4(0.2)=-0.312<0$ with $h_4$ defined in \eqref{A22}. %For the case $\beta=1$, it reduces to the usual IMEX4 formulae and the suitable multipliers have been well-established in \cite{nevanlinna1981}.
\end{rem}
\subsection{Explicit telescoping formulae for the second and third order schemes}
%We are able to conduct the  stability and error analysis for \eqref{linear} and \eqref{nonlinear} with the help of Lemma \ref{Gstability} and Theorem \ref{Thm1}.
Note that Lemma \ref{Gstability} only provides the existence of a symmetric positive definite matrix $G$ without giving the exact value of $g_{ij}$. In the following, we  provide explicit formulae for $g_{ij}$ in the second and third order cases.

\begin{prop}
%With $A_2^{\beta}$, $C_2^{\beta}$ and $D_2^{\beta}$ defined in \eqref{IMEX2coeff}, \eqref{Ccoeff} and \eqref{Di} for the second order scheme and $\beta\ge 1$, we have
For the second-order version of \eqref{genIMEX}, we have
\begin{equation}
\big(D_2^{\beta}(\phi^{n+1}), C_2^{\beta}(\phi^{n+1}) \big)=\frac{1}{\beta}|\phi^{n+1}|^2+\frac{1}{2}|\phi^{n+1}|^2-\frac{1}{2}|\phi^{n}|^2+\frac{1}{2}|\phi^{n+1}-\phi^n|^2,
\end{equation}
and
\begin{equation}
\begin{split}
\big(A_2^{\beta}(\phi^{n+1}), C_2^{\beta}(\phi^{n+1}) \big)&=a_2|\phi^{n+1}|^2-a_2|\phi^{n}|^2+|b_2\phi^{n+1}+c_2\phi^n|^2-|b_2\phi^{n}+c_2\phi^{n-1}|^2\\
&+|d_2\phi^{n+1}+e_2\phi^n+f_2\phi^{n-1}|^2,
\end{split}
\end{equation}
where the coefficients are given by
\begin{equation*}
\begin{split}
&e_2=-\sqrt{2\beta(2\beta+1)},\,\Delta_2=2\beta(2\beta+1),\, c_2=f_2=\frac{-\sqrt{2}+\sqrt{\Delta_2}}{2},\\
&d_2=\sqrt{2}+f_2,\, E_2=-\beta(2\beta-1),\,b_2=\frac{E_2-2e_2f_2}{-2c},\,a_2=\frac{3\beta+1-2\sqrt{\beta(2\beta+1)}}{2(\beta+1)^2}.
\end{split}
\end{equation*}
Moreover, we have $a_2>0$ for all $\beta \ge 1$. %, we have  $\Delta_2>0$, so all the coefficients are real number and we have $a_2>0$.
\end{prop}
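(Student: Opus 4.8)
The plan is to establish the two telescoping identities by explicitly constructing the decomposition guaranteed abstractly by Lemma~\ref{Gstability}, and then verify positivity of $a_2$ by a direct estimate. For the first identity, I would simply expand $\big(D_2^{\beta}(\phi^{n+1}),C_2^{\beta}(\phi^{n+1})\big)$ using the known coefficients $d_{2,1}=\frac{1}{\beta}$, $d_{2,0}=0$ (so $D_2^{\beta}(\phi^{n+1})=\frac{1}{\beta}\phi^{n+1}$) and $c_{2,1}=\beta+1$, $c_{2,0}=-\beta$ (so $C_2^{\beta}(\phi^{n+1})=(\beta+1)\phi^{n+1}-\beta\phi^n$). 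The inner product becomes $\frac{1}{\beta}\big((\beta+1)|\phi^{n+1}|^2-\beta(\phi^{n+1},\phi^n)\big)$, and the task is to massage this into the stated telescoping form $\frac{1}{\beta}|\phi^{n+1}|^2+\frac{1}{2}|\phi^{n+1}|^2-\frac{1}{2}|\phi^n|^2+\frac{1}{2}|\phi^{n+1}-\phi^n|^2$. Since $\frac{1}{\beta}D_2^{\beta}$ contributes the $\frac{1}{\beta}\phi^{n+1}$ piece cleanly, this reduces to the elementary identity $|\phi^{n+1}|^2-(\phi^{n+1},\phi^n)=\frac{1}{2}|\phi^{n+1}|^2-\frac{1}{2}|\phi^n|^2+\frac{1}{2}|\phi^{n+1}-\phi^n|^2$, which is just the polarization identity; this first part is routine.

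The second identity is the substantive one. Here $A_2^{\beta}(\phi^{n+1})=a_{2,2}\phi^{n+1}+a_{2,1}\phi^n+a_{2,0}\phi^{n-1}$ and $C_2^{\beta}(\phi^{n+1})=(\beta+1)\phi^{n+1}-\beta\phi^n$, so the product $\big(A_2^{\beta}(\phi^{n+1}),C_2^{\beta}(\phi^{n+1})\big)$ is a quadratic form in the three vectors $\phi^{n-1},\phi^n,\phi^{n+1}$. The target is to write this as $a_2\big(|\phi^{n+1}|^2-|\phi^n|^2\big)$ (the positive-definite $G$-part, which telescopes) plus the difference of two squared norms $|b_2\phi^{n+1}+c_2\phi^n|^2-|b_2\phi^n+c_2\phi^{n-1}|^2$ (the second telescoping block, which requires the two consecutive squares to share coefficients $b_2,c_2$) plus a nonnegative remainder square $|d_2\phi^{n+1}+e_2\phi^n+f_2\phi^{n-1}|^2$. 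The strategy is to match coefficients: expanding the right-hand side gives six bilinear coefficients (for $|\phi^{n+1}|^2,|\phi^n|^2,|\phi^{n-1}|^2$ and the three cross terms), and equating them with the corresponding coefficients extracted from the left-hand side yields a nonlinear algebraic system for $a_2,b_2,c_2,d_2,e_2,f_2$. I would solve this system in the order that respects its triangular structure: first determine the coefficient of $|\phi^{n-1}|^2$ (which fixes $c_2^2+f_2^2$), the $\phi^{n-1}$–$\phi^n$ cross term (which fixes $e_2f_2=E_2/2$ up to the stated sign), then propagate upward through the $\phi^n$ and $\phi^{n+1}$ coefficients to pin down $d_2,b_2$, and finally $a_2$. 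The explicit formulae $\Delta_2=2\beta(2\beta+1)$, $e_2=-\sqrt{\Delta_2}$, $c_2=f_2=\frac{-\sqrt2+\sqrt{\Delta_2}}{2}$, and so on, are exactly the solution of this system, so the verification amounts to substituting these values back and confirming every coefficient matches.

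The main obstacle is the algebraic bookkeeping in the second identity: the decomposition is overdetermined in appearance (six unknowns against six coefficient equations, but with the square-completion constraints coupling them nonlinearly through products like $e_2f_2$ and $c_2f_2$), so one must check that the chosen square-root branches are mutually consistent and that $\Delta_2\ge 0$ throughout. The final claim $a_2>0$ for all $\beta\ge1$ is then a short scalar estimate: from $a_2=\frac{3\beta+1-2\sqrt{\beta(2\beta+1)}}{2(\beta+1)^2}$, positivity of the numerator is equivalent to $(3\beta+1)^2>4\beta(2\beta+1)$, i.e. $9\beta^2+6\beta+1>8\beta^2+4\beta$, i.e. $\beta^2+2\beta+1=(\beta+1)^2>0$, which holds trivially. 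I would therefore close by squaring the inequality (legitimate since $3\beta+1>0$) and observing the reduction to $(\beta+1)^2>0$, completing the proof.
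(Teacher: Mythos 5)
Your strategy is the same as the paper's own proof, which is in fact only a sketch: the paper says both identities follow from ``the method of undetermined coefficients'' and leaves the details to the reader. Your treatment of the first identity (peel off the $\frac1\beta|\phi^{n+1}|^2$ piece and apply $|\phi^{n+1}|^2-(\phi^{n+1},\phi^n)=\frac12|\phi^{n+1}|^2-\frac12|\phi^n|^2+\frac12|\phi^{n+1}-\phi^n|^2$) is correct and complete, and your proof of $a_2>0$ (square $3\beta+1>2\sqrt{\beta(2\beta+1)}$ and reduce to $(\beta+1)^2>0$) is also correct.

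The step that fails is the last one in your plan for the second identity, namely that ``the verification amounts to substituting these values back and confirming every coefficient matches.'' It does not: with the coefficients as printed, the right-hand side equals $2\big(A_2^{\beta}(\phi^{n+1}),C_2^{\beta}(\phi^{n+1})\big)$ rather than $\big(A_2^{\beta}(\phi^{n+1}),C_2^{\beta}(\phi^{n+1})\big)$. One coefficient suffices to see this: the $(\phi^{n+1},\phi^{n-1})$ cross term on the left is $a_{2,0}(\beta)\,c_{2,1}(\beta)=\tfrac{(2\beta-1)(\beta+1)}{2}$, while on the right it is $2d_2f_2$ with $d_2=\tfrac{\sqrt{\Delta_2}+\sqrt{2}}{2}$ and $f_2=\tfrac{\sqrt{\Delta_2}-\sqrt{2}}{2}$, hence $2d_2f_2=\tfrac{\Delta_2-2}{2}=(2\beta-1)(\beta+1)$, exactly twice as large; the same factor of $2$ appears in every other coefficient. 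So a correct write-up must either state the identity for $2\big(A_2^{\beta}(\phi^{n+1}),C_2^{\beta}(\phi^{n+1})\big)$ or rescale the printed constants ($a_2$ divided by $2$; $b_2,c_2,d_2,e_2,f_2$ divided by $\sqrt{2}$). This normalization slip originates in the statement itself and is immaterial for the stability analysis (only positivity and the telescoping structure are used), but your proposal, which asserts the printed formulae solve the un-doubled matching system, would not close as written. Two smaller inaccuracies in your description of the system: the $|\phi^{n-1}|^2$ equation forces $f_2^2-c_2^2=0$ (it does not fix $c_2^2+f_2^2$), and the $\phi^{n}$--$\phi^{n-1}$ cross equation couples $e_2f_2$ with $b_2c_2$ --- in the doubled normalization it reads $2e_2f_2-2b_2c_2=E_2$, which is precisely what the formula $b_2=\frac{E_2-2e_2f_2}{-2c_2}$ encodes (the ``$-2c$'' in the statement is a typo for $-2c_2$) --- so the system is not triangular in the order you propose, and $e_2$ cannot be read off from that equation alone.
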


\begin{prop}

For the third-order version of \eqref{genIMEX}, we have
\begin{equation}\label{IMEX3a_hat}
\begin{split}
\big(D_3^{\beta}(\phi^{n+1}), C_3^{\beta}(\phi^{n+1}) \big)&=\hat{a}_3|\phi^{n+1}|^2-\hat{a}_3|\phi^{n}|^2+|\hat{b}_3\phi^{n+1}+\hat{c}_3\phi^n|^2-|\hat{b}_3\phi^{n}+\hat{c}_3\phi^{n-1}|^2\\
&+|\hat{d}_3\phi^{n+1}+\hat{e}_3\phi^n+\hat{f}_3\phi^{n-1}|^2,
\end{split}
\end{equation}
where the coefficients are given by
\begin{equation}\label{IMEX3_hat_coeff}
\begin{split}
&\hat{M}=\frac{2\beta^3 + 4\beta^2 + \beta + 1}{\beta + 1},\,\hat{N}=\frac{(2\beta^2 + 2\beta - 1)^2}{4},\, \hat{\Delta}_3=\hat{M}^2-4\hat{N}=\frac{4\beta(2\beta^2 + 4\beta + 1)}{(\beta + 1)^2}\\
&\hat{e}_3=-\sqrt{\frac{\hat{M}-\sqrt{\hat{\Delta}_3}}{2}},\,\hat{P}=\frac{\beta^3 + 2\beta^2 + 1}{\beta + 1}-\hat{e}_3^2,\,\hat{Q}=\frac{2\beta^3 + 4\beta^2 + \beta+1}{\beta + 1}-\hat{e}_3^2,\\
&\hat{f}_3=\frac{-\sqrt{\hat{P}}+\sqrt{\hat{Q}}}{2},\,\hat{c}_3=\hat{f}_3,\,\hat{d}_3=\sqrt{\hat{P}}+\hat{f}_3,\,\hat{b}_3=\frac{\beta(\beta-1)+4\hat{e}_3\hat{f}_3}{4\hat{c}_3},\,
\hat{a}_3=\frac{\beta^2}{2} + \frac{3\beta}{2} + 1-\hat{b}_3^2-\hat{d}_3^2;
\end{split}
\end{equation}
and
\begin{equation}\label{IMEX3a}
\begin{split}
&\big(A_3^{\beta}(\phi^{n+1}), C_3^{\beta}(\phi^{n+1}) \big)=a_3|\phi^{n+1}|^2-a_3|\phi^{n}|^2+|b_3\phi^{n+1}+c_3\phi^n|^2-|b_3\phi^{n}+c_3\phi^{n-1}|^2\\
&+|d_3\phi^{n+1}+e_3\phi^n+f_3\phi^{n-1}|^2
-|d_3\phi^{n}+e_3\phi^{n-1}+f_3\phi^{n-2}|^2+|g_3\phi^{n+1}+h_3\phi^{n}+i_3\phi^{n-1}+j_3\phi^{n-2}|^2,
\end{split}
\end{equation}
where the coefficients are given by
\begin{equation}\label{IMEX3_coeff}
\begin{split}
& M=2\beta^4 + 6\beta^3 + \frac{13\beta^2}{3} - \frac{\beta}{3} - \frac{1}{3},\,N=-(\frac{\beta^2}{2} - \frac{1}{6})(\frac{\beta^2}{2} + \frac{3\beta}{2} + 1),\,P=\frac{\sqrt{M}+1}{2},\\
&Q=-\frac{1}{2}(\beta(\frac{\beta^2}{2} - \frac{1}{6})(\beta + 1)),\,R=\beta^4 + \frac{7\beta^3}{2} + \frac{19\beta^2}{6} - \frac{\beta}{3} - 1,\,S=\frac{7\beta^4}{4} + \frac{25\beta^3}{4} + \frac{17\beta^2}{3} + \frac{\beta}{2} + \frac{1}{3},\\
& W=(\frac{\beta^2}{2} + \frac{3\beta}{2} + 1)(\frac{\beta^2}{2} + \beta + \frac{1}{3}),\,U=\frac{1}{2} - \frac{79\beta^2}{12} - \frac{21\beta^3}{4} - \frac{5\beta^4}{4} - \frac{23\beta}{12},\\
&f_3=\frac{\sqrt{P^2+2N}+P}{2},\, j_3=f_3,\,g_3=f_3-P,\,i_3=-\sqrt{M}-g_3,\,h_3=\sqrt{M}-f_3,\,e_3=\frac{2i_3j_3-Q}{2f_3},\\
&d_3=\frac{R-2g_3i_3}{2f_3},\,c_3=\sqrt{S-e_3^2-g_3^2-h_3^2},\,b_3=\frac{U-2d_3e_3-2g_3h_3}{2c_3},\,a_3=W-g_3^2-d_3^2-b_3^2.
\end{split}
\end{equation}
Moreover, it is numerically verified  that all variables appearing in \eqref{IMEX3_hat_coeff} and \eqref{IMEX3_coeff} are real and bounded, and   $\hat{a}_3,\, a_3>0$ for $1 \le \beta \le 100$ (cf.  Fig. \ref{fig:a3}).
\end{prop}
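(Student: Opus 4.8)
The plan is to prove the two identities \eqref{IMEX3a_hat} and \eqref{IMEX3a} as \emph{exact bilinear identities} valid for arbitrary $\phi^{n+1},\phi^n,\phi^{n-1},\phi^{n-2}\in H$, and to treat the reality, boundedness and positivity of the listed coefficients as a separate point. The organizing observation is that, since $(\cdot,\cdot)$ is symmetric, each side of \eqref{IMEX3a_hat} is a symmetric quadratic form in $\phi^{n+1},\phi^n,\phi^{n-1}$ and each side of \eqref{IMEX3a} a symmetric quadratic form in $\phi^{n+1},\phi^n,\phi^{n-1},\phi^{n-2}$. Such a form is determined by the coefficients of its distinct inner products $(\phi^i,\phi^j)$ --- six in the first case, ten in the second --- so the identities are equivalent to six, respectively ten, scalar relations in $\beta$, and the whole proof reduces to matching these coefficients. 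Since the number of square blocks on the right equals the number of these coefficients (six unknowns $\hat a_3,\dots,\hat f_3$; ten unknowns $a_3,\dots,j_3$), the resulting systems are square, and the right-hand sides are nothing but sum-of-squares (Cholesky-type) factorizations of the telescoped energies.

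First I would expand the left-hand sides, using the $a_{3,q},c_{3,q}$ of \eqref{IMEX3coeff}, the coefficients $d_{3,q}$, the splitting \eqref{splitB}--\eqref{eta}, and $D_3^\beta(\phi^{n+1})=\phi^{n+1}+\tfrac{1-\beta}{1+\beta}\phi^n$, writing $\big(D_3^\beta(\phi^{n+1}),C_3^\beta(\phi^{n+1})\big)$ and $\big(A_3^\beta(\phi^{n+1}),C_3^\beta(\phi^{n+1})\big)$ as explicit $\beta$-dependent combinations of the inner-product monomials. Then I would expand the right-hand sides, each a sum of telescoping blocks $\mathcal G(\phi^{n+1},\dots)-\mathcal G(\phi^n,\dots)$ plus one non-telescoping square, and solve the matching system by \emph{completing the square from the largest time shift downward} --- precisely the elimination order encoded in \eqref{IMEX3_hat_coeff} and \eqref{IMEX3_coeff}. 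For \eqref{IMEX3a_hat}, the $|\phi^{n-1}|^2$ relation forces $\hat c_3=\hat f_3$, the $(\phi^{n+1},\phi^{n-1})$ relation fixes $2\hat d_3\hat f_3$, and after eliminating $\hat d_3,\hat f_3$ through $\hat P,\hat Q$ the two remaining relations (the $(\phi^{n+1},\phi^n)$ and $|\phi^n|^2$ coefficients) collapse to a quadratic in $\hat e_3^2$ with discriminant $\hat\Delta_3=\hat M^2-4\hat N$; the branch $\hat e_3=-\sqrt{(\hat M-\sqrt{\hat\Delta_3})/2}$ is selected, whereupon $\hat b_3$ comes from the $(\phi^n,\phi^{n-1})$ relation and $\hat a_3$ from $\hat a_3+\hat b_3^2+\hat d_3^2=c_{3,2}=\tfrac{\beta^2}{2}+\tfrac{3\beta}{2}+1$. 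The same mechanism, now with a $3\times3$ energy, yields \eqref{IMEX3a}: the non-telescoping square $|g_3\phi^{n+1}+h_3\phi^n+i_3\phi^{n-1}+j_3\phi^{n-2}|^2$ is determined first ($g_3,h_3,i_3,j_3$, with $M,N,P$ as the attendant discriminants), then the three-term block ($d_3,e_3,f_3$, through $Q,R$), then the two-term block ($b_3,c_3$, through $S,U$), and finally $a_3=W-g_3^2-d_3^2-b_3^2$ with $W=a_{3,3}c_{3,2}$.

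I expect the decisive difficulty to lie not in the bookkeeping of the identity --- which must close, several of the matching relations (for example $\hat Q-\hat P=\beta(\beta+1)$) being automatically satisfied by the very definitions in \eqref{IMEX3_hat_coeff}, and closure being guaranteed a priori since the positive-real conditions \eqref{IMEXA} and \eqref{IMEX23D} of Theorem \ref{Thm1} let Lemma \ref{Gstability} supply a symmetric positive definite $G$ and reals $\delta_i$ abstractly, the Proposition merely exhibiting their closed form --- but in the \emph{admissibility} of the coefficients. Each elimination introduces a square root ($\sqrt{\hat\Delta_3},\sqrt{\hat P},\sqrt{\hat Q}$ in \eqref{IMEX3_hat_coeff}; $\sqrt M,\sqrt{P^2+2N},\sqrt{S-e_3^2-g_3^2-h_3^2}$ in \eqref{IMEX3_coeff}), and one must check that every radicand is nonnegative, so that all coefficients are real and bounded, and --- most importantly --- that the leading telescoping weights satisfy $\hat a_3>0$ and $a_3>0$, which is exactly what renders the telescoped energies, hence $G$, positive definite. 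Reduced to rational functions of $\beta$, these become high-degree polynomial positivity statements of the same type as the sextics encountered in the proof of Theorem \ref{Thm1}; rather than force their symbolic resolution, I would verify them numerically over the stated range $1\le\beta\le100$, as the Proposition asserts and Figure \ref{fig:a3} displays.
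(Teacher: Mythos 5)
Your proposal is correct and takes essentially the same route as the paper: the paper's proof is precisely the method of undetermined coefficients---assume the telescoping sum-of-squares form, match the coefficients of the distinct inner products $(\phi^i,\phi^j)$, and solve for $\hat a_3,\dots,\hat f_3$ and $a_3,\dots,j_3$---with the reality and boundedness of the radicals and the positivity of $\hat a_3,\,a_3$ verified numerically for $1\le\beta\le 100$ (Fig.~\ref{fig:a3}). Your explicit elimination order and the a priori existence remark via Lemma~\ref{Gstability} and Theorem~\ref{Thm1} are just a more detailed rendering of what the paper dismisses as ``tedious but straightforward.''
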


The proof of the above two propositions is based on the method of undetermined coefficients, more precisely,  we assume a desired form and use the method of undetermined coefficients to find the suitable coefficients. The detail of the proof is tedious but straightforward so we leave it to the interested readers.
%One can easily check that all variables appear in \eqref{IMEX3_hat_coeff} and \eqref{IMEX3_coeff} are real and bounded. In particular, in Fig. \ref{fig:a3}, we plot the values of $\hat{a}_3$ in \eqref{IMEX3a_hat} and $a_3$ in \eqref{IMEX3a}, which indicate that  they are always positive real number if $1 \le \beta \le 100$.
\begin{figure}[!htbp]
% \captionsetup[subfigure]{labelformat=empty}
 \centering
%\subfigure[Slope VS $\xi $ ]{ \includegraphics[scale=.3]{SlopeVSxi.png}} HFK: Figure 21 (a) & (b)
  \subfigure{ \includegraphics[scale=.3]{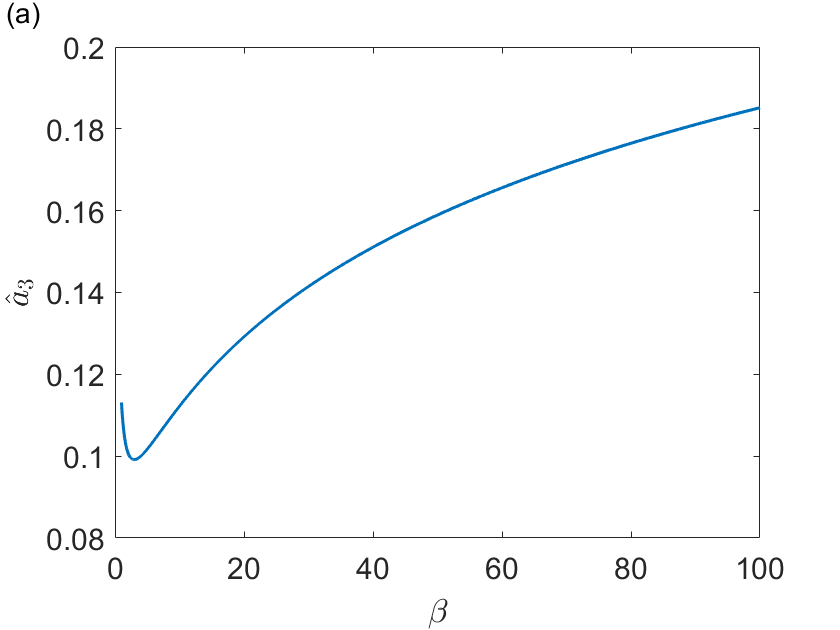}}
  \subfigure{ \includegraphics[scale=.3]{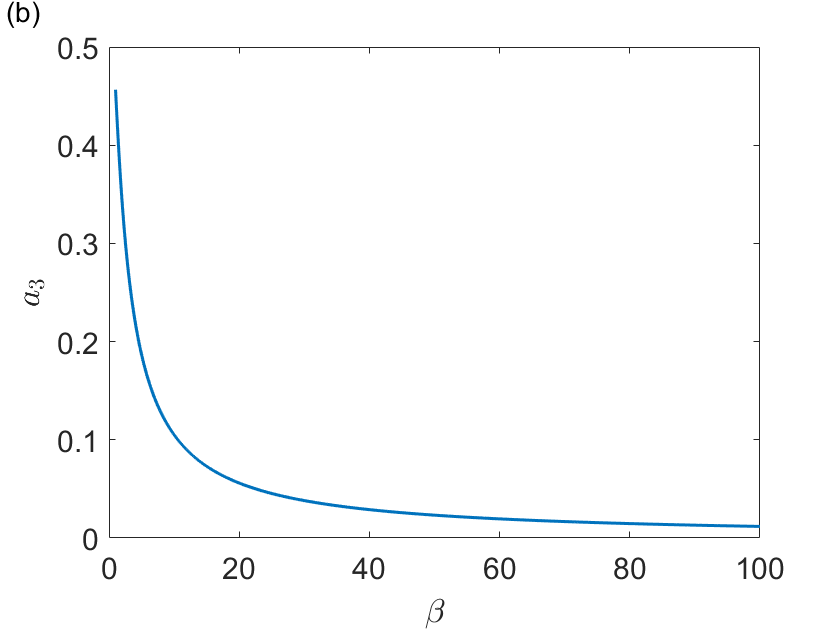}}
  \caption{ Values of $\hat{a}_3$ and $a_3$ with different $\beta$. }\label{fig:a3}
 \end{figure}

% {\color{blue} Should we move \eqref{IMEX3_hat_coeff} and \eqref{IMEX3_coeff} to the appendix? as they are complicated and boring.  Of course, I think they are useful in some situations since they give the explicit formulae for the telescoping series and they can become simple when fixing one $\beta$. }

\section{Stability of \eqref{genBDF} for  linear parabolic type equations}
We consider in this section  the new BDF schemes for the linear case \eqref{genBDF}, which can be written as
\begin{equation}\label{linear_sche}
\frac{A_k^{\beta}(\phi^{n+1})}{\Delta t}+\mathcal{L}B_k^{\beta}(\phi^{n+1})=f^{n+\beta},\quad k=2,3,4,\end{equation}
and establish a stability result based on Theorem \ref{Thm1}.
% Using the notation introduced in \eqref{IMEX234coeff}, it follows from \eqref{TaylorA} and \eqref{TaylorB} that the $k$-th order implicit IMEX type methods for \eqref{linear} can be written as

%Then, we can derive the following stability results for \eqref{linear_sche} by using the energy techniques and Theorem \ref{Thm1}.
\begin{theorem}\label{Thm_Stab1}
Assuming $\|f(t)\|_{\star}^2 \le C_f,\, \forall t\le T$, {\color{black}$\beta > 1$ for $k=2,3$, and  $\beta \ge 2$ for $k=4$, }then the   scheme \eqref{linear_sche}   is stable in the sense that
\begin{equation}\label{stab_linear0}
g_k|\phi^{n+1}|^2+\frac{1}{2}\Delta t \eta_k(\beta)\sum_{q=k}^{n+1}\|C_k^{\beta}(\phi^q)\|^2 \le C \sum_{q=0}^{k-1}\big(|\phi^q|^2+\Delta t \|\phi^q\|^2 \big)+\frac{TC_f}{2\eta_k(\beta)},\quad \forall k\le n+1\le \frac{T}{\Delta t},
\end{equation}
with $g_k$ a positive constant depending only on $k$, $C$ a constant independent of $\Delta t$ and $\eta_k(\beta)$ is defined in \eqref{eta}.
\end{theorem}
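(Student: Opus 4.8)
The plan is to run an energy argument by testing the scheme \eqref{linear_sche} against the explicit multiplier $C_k^\beta(\phi^{n+1})$ in the $H$-inner product, using Theorem \ref{Thm1} to convert the resulting bilinear forms into telescoping quantities. First I would pair \eqref{linear_sche} with $C_k^\beta(\phi^{n+1})$ and multiply by $\Delta t$ to get
\begin{equation*}
\big(A_k^\beta(\phi^{n+1}),C_k^\beta(\phi^{n+1})\big)+\Delta t\big(\mathcal{L}B_k^\beta(\phi^{n+1}),C_k^\beta(\phi^{n+1})\big)=\Delta t\big(f^{n+\beta},C_k^\beta(\phi^{n+1})\big).
\end{equation*}
The decisive move is to insert the splitting \eqref{splitB}, so that the elliptic contribution becomes $\eta_k(\beta)\|C_k^\beta(\phi^{n+1})\|^2+(\mathcal{L}D_k^\beta(\phi^{n+1}),C_k^\beta(\phi^{n+1}))$, since $\|u\|^2=(\mathcal{L}u,u)$. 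The first piece is the dissipation we want to retain; the second is a cross term to be telescoped.

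Next I would telescope the two bilinear forms. For the discrete-time term, the gcd statement and the inequality \eqref{IMEXA} of Theorem \ref{Thm1} let me apply Lemma \ref{Gstability} with $\alpha(\zeta)=\tilde A_k^\beta(\zeta)$, $\mu(\zeta)=\zeta\tilde C_k^\beta(\zeta)$ and $\upsilon^i=\phi^{n+1-k+i}$, yielding a symmetric positive definite $G$ and real $\delta_i$ with $(A_k^\beta(\phi^{n+1}),C_k^\beta(\phi^{n+1}))=H^{n+1}-H^{n}+|\sum_i\delta_i\phi^{n+1-k+i}|^2$, where $H^{n+1}:=\sum_{i,j=1}^k g_{ij}(\phi^{n+1-k+i},\phi^{n+1-k+j})\ge0$. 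For the cross term I would apply the same lemma in the $V$-inner product $(\mathcal{L}\cdot,\cdot)$ with $\alpha(\zeta)=\tilde D_k^\beta(\zeta)$, $\mu(\zeta)=\tilde C_k^\beta(\zeta)$, which is legitimate precisely because of \eqref{IMEX23D} for $k=2,3$ and \eqref{IMEX4D} for $k=4$; this produces $(\mathcal{L}D_k^\beta(\phi^{n+1}),C_k^\beta(\phi^{n+1}))=\tilde H^{n+1}-\tilde H^{n}+\|\textstyle\sum_i\tilde\delta_i\phi^{n+1-k+i}\|^2$ with $\tilde H^{n+1}\ge0$. For $k=2,3$ these telescopings are available in fully explicit form from the two Propositions, with $H^{n+1}\ge a_k|\phi^{n+1}|^2$ and $a_k>0$. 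The forcing is controlled by \eqref{starnorm}, Cauchy--Schwarz and Young's inequality, $\Delta t(f^{n+\beta},C_k^\beta(\phi^{n+1}))\le \frac{\Delta t\eta_k(\beta)}{2}\|C_k^\beta(\phi^{n+1})\|^2+\frac{\Delta t}{2\eta_k(\beta)}\|f^{n+\beta}\|_\star^2$, so that exactly half of the dissipation is absorbed.

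Collecting terms and discarding the nonnegative square contributions, I obtain for each $n$ the one-step inequality $H^{n+1}-H^{n}+\Delta t(\tilde H^{n+1}-\tilde H^{n})+\frac{\Delta t\eta_k(\beta)}{2}\|C_k^\beta(\phi^{n+1})\|^2\le\frac{\Delta t}{2\eta_k(\beta)}\|f^{n+\beta}\|_\star^2$. Summing over $n$ from $k-1$ to $N$ telescopes the $H$ and $\tilde H$ terms, leaving $H^{N+1}+\Delta t\tilde H^{N+1}$ on the left and the initial quantities $H^{k-1}+\Delta t\tilde H^{k-1}$ on the right. Using positive definiteness of $G$ (equivalently $a_k>0$ for $k=2,3$) I bound $H^{N+1}\ge g_k|\phi^{N+1}|^2$ with $g_k=\lambda_{\min}(G)>0$, drop $\tilde H^{N+1}\ge0$, estimate the initial energies by $C\sum_{q=0}^{k-1}(|\phi^q|^2+\Delta t\|\phi^q\|^2)$ via the upper boundedness of the $G$-norms, and use $\|f^{n+\beta}\|_\star^2\le C_f$ together with $(N+1)\Delta t\le T$ to get $\frac{\Delta t}{2\eta_k(\beta)}\sum\|f^{n+\beta}\|_\star^2\le\frac{TC_f}{2\eta_k(\beta)}$. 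This is exactly \eqref{stab_linear0}. The hypotheses $\beta>1$ for $k=2,3$ and $\beta\ge2$ for $k=4$ enter in two places: they guarantee $\eta_k(\beta)>0$ in \eqref{eta}, without which the dissipation collapses and $1/\eta_k(\beta)$ diverges, and they are precisely the regime where the $D$-term positivity \eqref{IMEX23D}--\eqref{IMEX4D} of Theorem \ref{Thm1} holds.

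The hard part will be the $k=4$ case, where no explicit telescoping (Proposition-style) is available: I must argue purely abstractly that the two $G$-matrices supplied by Lemma \ref{Gstability} are simultaneously usable, that the final-window quadratic form $H^{N+1}$ genuinely dominates $|\phi^{N+1}|^2$ with a $\beta$-uniform constant $g_4>0$, and that $\tilde H^{k-1}$ is bounded above by the initial data with a constant independent of $\Delta t$. The delicate input there is \eqref{IMEX4D}, valid only for $\beta\ge2$, which is what forces the stronger restriction on $\beta$ in the statement; everything else is a routine summation once the two telescoping identities are in hand.
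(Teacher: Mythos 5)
Your proposal is correct and follows essentially the same route as the paper's proof: test with $C_k^{\beta}(\phi^{n+1})$, split $B_k^{\beta}$ via \eqref{splitB}, apply Lemma \ref{Gstability} twice (once in the $H$-inner product with $\alpha=\tilde{A}_k^{\beta}$, $\mu(\zeta)=\zeta\tilde{C}_k^{\beta}(\zeta)$, once in the $\mathcal{L}$-weighted inner product with $\alpha=\tilde{D}_k^{\beta}$, $\mu=\tilde{C}_k^{\beta}$, both justified by Theorem \ref{Thm1}), absorb the forcing by Young's inequality, and sum the telescoping one-step inequality. Your worry about $k=4$ needing a separate argument is unnecessary, since the paper handles all orders uniformly through the abstract G-stability lemma rather than the explicit Propositions, but this does not affect correctness.
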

\begin{proof}
We denote $f^i=f(t^i),\,\forall i \le \frac{T}{\Delta t}$. Taking the inner product  of   \eqref{linear_sche} with $\Delta t C_k^{\beta}(\phi^{n+1})$ and splitting  $B_k^{\beta}(\phi^{n+1})$ as in \eqref{splitB}, we  obtain
\begin{small}
\begin{equation}\label{stab_linear1}
\big(A_k^{\beta}(\phi^{n+1}), C_k^{\beta}(\phi^{n+1})\big)+\Delta t \eta_k(\beta) \|C_k^{\beta}(\phi^{n+1})\|^2+\Delta t\big(\mathcal{L}D_k^{\beta}(\phi^{n+1}),C_k^{\beta}(\phi^{n+1})\big)=\Delta t\big(f^{n+\beta}, C_k^{\beta}(\phi^{n+1}) \big),
\end{equation}
\end{small}
where we used $\big(\mathcal{L}C_k^{\beta}(\phi^{n+1}),C_k^{\beta}(\phi^{n+1})\big)=\|C_k^{\beta}(\phi^{n+1})\|^2$. We estimate the terms in \eqref{stab_linear1} as follows.

It follows from \eqref{starnorm} and the assumption on $f$ that
\begin{equation}\label{stab_linear2}
\begin{split}
\big(f^{n+\beta}, C_k^{\beta}(\phi^{n+1}) \big) & \le \|f^{n+\beta}\|_{\star}\|C_k^{\beta}(\phi^{n+1})\|\\
& \le \frac{1}{2\eta_k(\beta)}\|f^{n+\beta}\|_{\star}^2+\frac{\eta_k(\beta)}{2}\|C_k^{\beta}(\phi^{n+1})\|^2\\
& \le \frac{C_f}{2\eta_k(\beta)}+\frac{\eta_k(\beta)}{2}\|C_k^{\beta}(\phi^{n+1})\|^2.\\
\end{split}
\end{equation}
Denote $\Phi^{n+1}_k:=(\phi^{n-k+1},...,\phi^{n+1})^{T}$. It follows from Lemma \ref{Gstability} and Theorem \ref{Thm1} that there exist symmetric positive definite matrices $G=(g_{ij}) \in \mathbb{R}^{k\times k} $ and $H=(h_{ij})\in \mathbb{R}^{(k-1)\times (k-1)}$ such that
\begin{equation}\label{Gdef}
\begin{split}
\big(A_k^{\beta}(\phi^{n+1}), C_k^{\beta}(\phi^{n+1})\big) &\ge \sum_{i,j=1}^k g_{ij}(\phi^{n+1+i-k},\phi^{n+1+j-k})-\sum_{i,j=1}^k g_{ij}(\phi^{n+i-k},\phi^{n+j-k})\\
&=: |\Phi^{n+1}_k|^2_G-|\Phi^{n}_k|^2_G,
\end{split}
\end{equation}
and
\begin{equation}\label{Hdef}
\begin{split}
\big(\mathcal{L}D_k^{\beta}(\phi^{n+1}), C_k^{\beta}(\phi^{n+1})\big) & \ge \sum_{i,j=1}^{k-1} h_{ij}(\mathcal{L}\phi^{n+2+i-k},\phi^{n+2+j-k})-\sum_{i,j=1}^{k-1} h_{ij}(\mathcal{L}\phi^{n+1+i-k},\phi^{n+1+j-k})\\
&=: \|\Phi^{n+1}_k\|^2_H-\|\Phi^{n}_k\|^2_H.
\end{split}
\end{equation}
Now, combining \eqref{stab_linear1}-\eqref{Hdef}, we obtain
\begin{equation}\label{linear_tosum}
|\Phi^{n+1}_k|^2_G-|\Phi^{n}_k|^2_G +\Delta t(\|\Phi^{n+1}_k\|^2_H-\|\Phi^{n}_k\|^2_H)+\frac{1}{2}\Delta t\eta_k(\beta)\|C_k^{\beta}(\phi^{n+1})\|^2
 \le \frac{C_f\Delta t }{2\eta_k(\beta)}.
\end{equation}
Summing up \eqref{linear_tosum} from $n=k-1$ to $n=m$, we obtain
\begin{equation}\label{linear_sum1}
 |\Phi^{m+1}_k|^2_G +\Delta t\|\Phi^{m+1}_k\|^2_H+\frac{1}{2}\Delta t\eta_k(\beta)\sum_{q=k-1}^{m}\|C_k^{\beta}(\phi^{q+1})\|^2
\le |\Phi^{k-1}_k|^2_G+\Delta t\|\Phi^{k-1}_k\|^2_H+\frac{T C_f}{2\eta_k(\beta)} .
\end{equation}
Let $g_k$ be the smallest eigenvalue of the matrix $G \in \mathbb{R}^{k,k}$, then we have
\begin{equation}
|\Phi^{m+1}_k|^2_G \ge g_k |\phi^{m+1}|^2,
\end{equation}
 and we can choose a constant $C$ large enough such that
\begin{subequations}\label{constant_C}
\begin{align}
& |\Phi^{k-1}_k|^2_G \le C\sum_{i=0}^{k-1}|\phi^i|^2,\\
& \Delta t\|\Phi^{k-1}_k\|^2_H \le C\Delta t \sum_{i=0}^{k-1}\|\phi^i\|^2.
\end{align}
\end{subequations}
Finally, combining \eqref{linear_sum1} and \eqref{constant_C} leads to
\begin{equation}\label{linear_sum2}
g_k|\phi^{m+1}|^2+\frac{1}{2}\Delta t\eta_k(\beta)\sum_{q=k-1}^{m}\|C_k^{\beta}(\phi^{q+1})\|^2 \le C\sum_{i=0}^{k-1}(|\phi^i|^2+\Delta t \|\phi^i\|^2)+\frac{TC_f}{2\eta_k(\beta)},
\end{equation}
which  implies \eqref{stab_linear0}.
\end{proof}
{\color{black}
\begin{rem}
Note that in order to  obtain \eqref{Hdef},   the linear operator $\mathcal{L}$ is required to be  self-adjoint while using the Nevanlinna-Odeh approach in \cite{nevanlinna1981} can also deal with $\mathcal{L}$ which is  not self-adjoint.
\end{rem}}
\section{Stability and error analysis of \eqref{genIMEX}  for  nonlinear parabolic type equations}
In this section, we use the stability result established in the last section to carry out a  stability and  error analysis of  \eqref{genIMEX}  for  nonlinear parabolic equations.
%\subsection{A new class of IMEX schemes for the nonlinear parabolic equations. }\label{notation}
%Combining the general IMEX formulae \eqref{TaylorA} and \eqref{TaylorB} with the high-order explicit extrapolation formulas \eqref{TaylorC} for the nonlinear terms, for any real number $\beta \ge 1$, the general $k$-th order implicit-explicit IMEX type schemes for \eqref{nonlinear} can be written as

\subsection{Stability}
Under the local Lipschitz condition \eqref{LocalLp} on the nonlinear operator $\mathcal{G}$, we can derive a local stability result for \eqref{genIMEX} similarly as in the proof of the linear case (cf. Theorem \ref{Thm_Stab1}) if we further assume
\begin{equation}\label{localCK}
C_k^{\beta}(\phi^{n}) \in \mathcal{B}_{\phi(t^{n+\beta})},
\end{equation}
with $\beta > 1$ for $k=2,3$, and  $\beta \ge 2$ for $k=4$.
Note that formally  \eqref{localCK} must be true when $\Delta t$ small enough since $C_k^{\beta}(\phi^{n})$ is a $k$-th order approximation to $\phi(t^{n+\beta})$. We shall defer the rigorous  proof of  \eqref{localCK} to  subsection \ref{error}  by induction together with the error analysis.

\subsection{Truncation errors}
%In this subsection, we investigate the truncation errors of the general IMEX type formulae \eqref{genIMEX}.
Using the notations introduced in previous sections, we define the truncation errors for $k=2,3,4$ as
\begin{subequations}\label{truncation0}
\begin{align}
& E_k^{n+1}:=\Delta t\phi_t(t^{n+\beta})-A_k^{\beta}(\phi(t^{n+1})),\\
& R_k^{n+1}:=\phi(t^{n+\beta})-B_k^{\beta}(\phi(t^{n+1})),\\
& P_k^{n}:=\phi(t^{n+\beta})-C_k^{\beta}(\phi(t^{n})).
\end{align}
\end{subequations}
It follows from \eqref{TaylorA}, \eqref{TaylorB} and \eqref{TaylorC} that
\begin{equation}
E_k^{n+1}=\mathcal{O}(\Delta t^{k+1}),\quad R_k^{n+1}=\mathcal{O}(\Delta t^{k}),\quad \quad P_k^{n}=\mathcal{O}(\Delta t^{k}).\\
\end{equation}
More precisely, one can verify
\begin{subequations}\label{truncation}
\begin{align}
& E_k^{n+1}=\frac{1}{k!}\sum_{q=0}^k a_{k,q}(\beta)\int_{t^{n+1+q-k}}^{t^{n+\beta}}(t^{n+1+q-k}-s)^k\phi^{(k+1)}(s)ds,\\
& R_k^{n+1}=\frac{1}{(k-1)!}\sum_{q=0}^{k-1} b_{k,q}(\beta)\int_{t^{n+2+q-k}}^{t^{n+\beta}}(t^{n+2+q-k}-s)^{k-1}\phi^{(k)}(s)ds,\\
& P_k^{n}=\frac{1}{(k-1)!}\sum_{q=0}^{k-1} c_{k,q}(\beta)\int_{t^{n+1+q-k}}^{t^{n+\beta}}(t^{n+1+q-k}-s)^{k-1}\phi^{(k)}(s)ds.
\end{align}
\end{subequations}
Therefore, under suitable regularity requirements, we have
\begin{equation}\label{truncationerror}
|E_k^{n+1}|^2 \le C (\Delta t)^{2k+2},\quad \|R_k^{n+1}\|^2 \le C (\Delta t)^{2k}, \quad \|P_k^{n}\|^2 \le C (\Delta t)^{2k},\quad \forall n+1 \le \frac{T}{\Delta t}.
\end{equation}
\subsection{Error estimate}\label{error}
We denote $e^m:=\phi^m-\phi(t^m)$, where $\phi(t^m)$ is the exact solution of \eqref{nonlinear} at time $t^m$, i.e.,
\begin{equation}\label{nonlinear_exact}
\phi_t(t^m)+\mathcal{L}\phi({t^m})+\mathcal{G}[\phi(t^m)]=f(t^m).
\end{equation}
We will  use the following discrete version of the Gronwall lemma \cite{quarteroni2008numerical}.
\begin{lemma}\label{Gron1}
Let $y^k,\,h^k,\,g^k,\,f^k$ be four nonnegative sequences satisfying
\begin{equation*}
y^n+\Delta t \sum_{k=0}^{n}h^k \le B+\Delta t \sum_{k=0}^{n}(g^ky^k+f^k)\; \text{ with }\; \Delta t \sum_{k=0}^{T/\Delta t} g^k \le M,\, \forall\, 0\le n \le T/\Delta t.
\end{equation*}
We assume $\Delta t\, g^k <1$ for all $k$, and let $\sigma=\max_{0\le k \le T/\Delta t}(1-\Delta t g^k)^{-1}$. Then
\begin{equation*}
y^n+\Delta t \sum_{k=1}^{n}h^k \le \exp(\sigma M)(B+\Delta t\sum_{k=0}^{n}f^k),\,\,\forall\, n\le T/\Delta t.
\end{equation*}
\end{lemma}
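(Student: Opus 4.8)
The plan is to reduce the coupled inequality to a standard one-step recursion and then close it by induction. \textbf{First}, I would isolate the diagonal term $k=n$ on the right-hand side of the hypothesis: writing $\Delta t\sum_{k=0}^n g^k y^k=\Delta t\sum_{k=0}^{n-1} g^k y^k+\Delta t\,g^n y^n$ and moving the last summand to the left gives
\[
(1-\Delta t\,g^n)\,y^n+\Delta t\sum_{k=0}^n h^k\le G^n+\Delta t\sum_{k=0}^{n-1} g^k y^k,\qquad G^n:=B+\Delta t\sum_{k=0}^n f^k .
\]
Because every $g^k\ge 0$ and $\Delta t\,g^k<1$, the factor $1-\Delta t\,g^n$ lies in $(0,1]$, so dividing by it is legitimate; this is exactly where the hypothesis $\Delta t\,g^k<1$ is needed. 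Setting $\sigma_k:=(1-\Delta t\,g^k)^{-1}\ge 1$ and $S_n:=y^n+\Delta t\sum_{k=0}^n h^k$, and using both $(1-\Delta t\,g^n)S_n\le(1-\Delta t\,g^n)y^n+\Delta t\sum_{k=0}^n h^k$ (since $1-\Delta t\,g^n\le 1$ and the $h$-sum is nonnegative) and the trivial bound $y^k\le S_k$, I obtain the clean recursion
\[
S_n\le\sigma_n G^n+\sigma_n\,\Delta t\sum_{k=0}^{n-1} g^k S_k .
\]

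\textbf{Second}, I would close this recursion by induction on $n$, proving $S_n\le G^n p_n$ with $p_n:=\prod_{k=0}^n\sigma_k$ (and $p_{-1}:=1$). The base case $n=0$ is immediate from the recursion, whose inner sum is then empty. For the inductive step the crucial algebraic fact is the identity $\sigma_k=1+\sigma_k\Delta t\,g^k$, a mere rearrangement of $\sigma_k(1-\Delta t\,g^k)=1$; it yields $p_k=\sigma_k p_{k-1}$ and hence the telescoping relation $\Delta t\,g^k p_k=(\sigma_k-1)p_{k-1}=p_k-p_{k-1}$. Substituting the induction hypothesis $S_k\le G^k p_k$ into the recursion and bounding each $G^k\le G^n$ — which is where the sign condition $f^k\ge 0$, making $G^n$ nondecreasing, enters — gives $\Delta t\sum_{k=0}^{n-1} g^k S_k\le G^n\sum_{k=0}^{n-1}(p_k-p_{k-1})=G^n(p_{n-1}-1)$, so that $S_n\le\sigma_n G^n+\sigma_n G^n(p_{n-1}-1)=\sigma_n G^n p_{n-1}=G^n p_n$, completing the induction.

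\textbf{Finally}, I would convert the product bound into the stated exponential bound. Using $1+x\le e^x$ together with $\sigma_k\le\sigma$,
\[
\prod_{k=0}^n\sigma_k=\prod_{k=0}^n\bigl(1+\sigma_k\Delta t\,g^k\bigr)\le\exp\Big(\Delta t\sum_{k=0}^n\sigma_k g^k\Big)\le\exp\Big(\sigma\,\Delta t\sum_{k=0}^n g^k\Big)\le\exp(\sigma M),
\]
where the last step uses $\Delta t\sum_{k=0}^{T/\Delta t} g^k\le M$. Thus $S_n\le\exp(\sigma M)\,G^n$, and discarding the nonnegative term $\Delta t\,h^0$ from $S_n$ replaces $\sum_{k=0}^n h^k$ by $\sum_{k=1}^n h^k$, giving precisely the asserted inequality. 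There is no genuinely deep step here — this is a standard discrete Gronwall estimate — but the one point demanding care is the handling of the implicit diagonal term $g^n y^n$: one must divide by $1-\Delta t\,g^n$ and then reabsorb the accumulated factors $\sigma_k$ into the exponential through the identity $\sigma_k=1+\sigma_k\Delta t\,g^k$, so that no spurious prefactor $\sigma$ is left multiplying $\exp(\sigma M)$.
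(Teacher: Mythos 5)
Your proof is correct: the isolation of the diagonal term $\Delta t\,g^n y^n$ (using $\Delta t\,g^k<1$), the induction yielding $S_n\le G^n\prod_{k=0}^n\sigma_k$ via the telescoping identity $\Delta t\,g^k p_k=p_k-p_{k-1}$, and the final bound $\prod_{k=0}^n\sigma_k\le\exp(\sigma M)$ through $1+x\le e^x$ are all sound. The paper itself gives no proof of this lemma --- it simply cites it from the reference on numerical approximation of PDEs --- and your argument is precisely the standard discrete Gronwall proof found there, so there is nothing further to compare.
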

\begin{theorem}\label{Thmerror}
Assume  \eqref{LocalLp} and the solution of \eqref{nonlinear} is sufficiently smooth such that \eqref{truncationerror} is true, and the following stability condition
\begin{equation}\label{stabcond}
\eta_k(\beta)-\sqrt{\gamma} \ge \rho>0
\end{equation}
 is satisfied. Given $\phi^0=\phi(0) \in V$,  we assume  $\beta > 1$ for $k=2,3$, and  $\beta \ge 2$ for $k=4$, and
  that  $\phi^i, i=1,...,k-1,$ are computed with a proper initialization procedure such that
\begin{equation}\label{initial}
|\phi^i-\phi(t^i)|^2,\,\|\phi^i-\phi(t^i)\|^2 \le C (\Delta t)^{2k}, i=1,...,k-1, \,\text{and} \,\, C_k^{\beta}(\phi^{k-1})\in \mathcal{B}_{\phi(t^{k-1+\beta})};
\end{equation}
 then % for $\phi^{m+1}$ is computed by the $k$-th $(k=2,3,4)$ order scheme \eqref{genIMEX} and
 for $\Delta t$ sufficiently small, we have
 \begin{equation}\label{localCK0}
 	C_k^{\beta}(\phi^{n+1}) \in \mathcal{B}_{\phi(t^{n+1+\beta})},\quad \forall n+1\le \frac{T}{\Delta t},
 \end{equation}
 and
\begin{equation}\label{error0}
g_k|e^{n+1}|^2+\frac{\rho}{2}\Delta t\sum_{q=k-1}^{n+1}\|C_k(e^{q})\|^2 \le C\exp \big((1-C\Delta t)^{-1}T \big) (\Delta t)^{2k},\quad \forall n+1\le \frac{T}{\Delta t},
\end{equation}
where $g_k$ is a positive constant depending only on $k$, $C$ is  a constant independent of $\Delta t$. % and $C_k^{\beta}(\cdot)$ is defined in \ref{IMEX234coeff} and in particular, we have

\end{theorem}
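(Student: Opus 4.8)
The plan is to subtract a consistency form of the exact equation from the scheme, test with the multiplier $C_k^\beta(e^{n+1})$, invoke the telescoping identities guaranteed by Theorem~\ref{Thm1} together with Lemma~\ref{Gstability}, and then close a bootstrap induction in $n$ that establishes \eqref{localCK0} and \eqref{error0} simultaneously. First I would evaluate \eqref{nonlinear_exact} at $t^{n+\beta}$, rewrite $\Delta t\,\phi_t(t^{n+\beta})$, the $\mathcal L$-argument $\phi(t^{n+\beta})$ and the $\mathcal G$-argument $\phi(t^{n+\beta})$ through the truncation errors \eqref{truncation0}, and subtract from \eqref{genIMEX}. With $e^m=\phi^m-\phi(t^m)$ this yields
\begin{equation*}
\frac{A_k^\beta(e^{n+1})}{\Delta t}+\mathcal L B_k^\beta(e^{n+1})=\frac{E_k^{n+1}}{\Delta t}+\mathcal L R_k^{n+1}-\big(\mathcal G(C_k^\beta(\phi^n))-\mathcal G(\phi(t^{n+\beta}))\big).
\end{equation*}
I would split the nonlinear defect as $\mathcal G(C_k^\beta(\phi^n))-\mathcal G(C_k^\beta(\phi(t^n)))$ plus $\mathcal G(C_k^\beta(\phi(t^n)))-\mathcal G(\phi(t^{n+\beta}))$; the second term is harmless since $C_k^\beta(\phi(t^n))=\phi(t^{n+\beta})-P_k^n$ with $\|P_k^n\|,|P_k^n|=O(\Delta t^k)$ by \eqref{truncationerror}, while the first will be controlled through \eqref{LocalLp} by $C_k^\beta(e^n)$, provided $C_k^\beta(\phi^n)$ lies in the ball $\mathcal B_{\phi(t^{n+\beta})}$.

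Next I would take the $H$-inner product with $\Delta t\,C_k^\beta(e^{n+1})$ and split $B_k^\beta(e^{n+1})=\eta_k(\beta)C_k^\beta(e^{n+1})+D_k^\beta(e^{n+1})$ as in \eqref{splitB}, so that self-adjointness of $\mathcal L$ produces the dissipative term $\Delta t\,\eta_k(\beta)\|C_k^\beta(e^{n+1})\|^2$. Because the hypotheses $\beta>1$ (for $k=2,3$) and $\beta\ge2$ (for $k=4$) are exactly those under which Theorem~\ref{Thm1} holds, Lemma~\ref{Gstability} supplies symmetric positive definite $G,H$ and telescoping lower bounds for $(A_k^\beta(e^{n+1}),C_k^\beta(e^{n+1}))$ and $(\mathcal L D_k^\beta(e^{n+1}),C_k^\beta(e^{n+1}))$, precisely as in \eqref{Gdef}--\eqref{Hdef}. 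On the right-hand side I would estimate the two truncation contributions by Young's inequality and the dual norm \eqref{starnorm}, using $\|E_k^{n+1}\|_\star^2=O(\Delta t^{2k+2})$ and $\|R_k^{n+1}\|^2=O(\Delta t^{2k})$, so that even after the factor $1/\Delta t$ they sum to $O(\Delta t^{2k})$. The decisive term is the nonlinear one: bounding $-\Delta t\big(\mathcal G(C_k^\beta(\phi^n))-\mathcal G(C_k^\beta(\phi(t^n))),C_k^\beta(e^{n+1})\big)$ via \eqref{LocalLp} and Young with parameter $\varepsilon=1/\sqrt\gamma$ gives $\tfrac{\sqrt\gamma}{2}\Delta t\|C_k^\beta(e^{n+1})\|^2+\tfrac{\sqrt\gamma}{2}\Delta t\|C_k^\beta(e^n)\|^2+\tfrac{\mu}{2\sqrt\gamma}\Delta t|C_k^\beta(e^n)|^2$; after summation the two dissipative pieces combine into $\sqrt\gamma\,\Delta t\sum_q\|C_k^\beta(e^q)\|^2$ and are absorbed by $\Delta t\,\eta_k(\beta)\sum_q\|C_k^\beta(e^q)\|^2$, leaving the strictly positive dissipation $\tfrac{\rho}{2}\Delta t\|C_k^\beta(e^{n+1})\|^2$ on the left exactly because of the stability condition \eqref{stabcond}.

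I would then sum the inequality from $n=k-1$ to $m$, bound the leading $G$-weighted term from below by $g_k|e^{m+1}|^2$ through the smallest eigenvalue of $G$, control the initial-data contributions via \eqref{initial}, and rewrite the residual $\tfrac{\mu}{2\sqrt\gamma}\Delta t\sum_q|C_k^\beta(e^q)|^2$ using $|C_k^\beta(e^q)|^2\le C\sum_j|e^{j}|^2$. Applying the discrete Gronwall Lemma~\ref{Gron1} with constant $g^q$, accumulated source $B+\Delta t\sum f^q=O(\Delta t^{2k})$, and $\sigma=(1-C\Delta t)^{-1}$ then produces exactly \eqref{error0}.

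The genuine obstacle is the apparent circularity: the energy estimate at level $n+1$ relies on \eqref{LocalLp}, valid only when $C_k^\beta(\phi^n)\in\mathcal B_{\phi(t^{n+\beta})}$, yet this membership \eqref{localCK0} is itself part of the conclusion. I would resolve it by induction on $m$. Assuming \eqref{localCK0} for all indices $j\le m$ (the base case $j=k-1$ being furnished by \eqref{initial}), the nonlinear term in every energy identity up to level $m$ uses \eqref{LocalLp} legitimately, so the summed estimate and Gronwall give \eqref{error0} at level $m+1$; extracting the single term $q=m+1$ from the cumulative dissipation yields the pointwise bound $\|C_k^\beta(e^{m+1})\|^2\le C\Delta t^{2k-1}$. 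Since $2k-1\ge3$, combining this with $\|P_k^{m+1}\|=O(\Delta t^k)$ gives $\|C_k^\beta(\phi^{m+1})-\phi(t^{m+1+\beta})\|\le C\Delta t^{(2k-1)/2}+C\Delta t^k\le1$ for $\Delta t$ small, which is \eqref{localCK0} at $m+1$ and closes the induction. Beyond bookkeeping, the two points demanding care are the sharp choice $\varepsilon=1/\sqrt\gamma$ that makes \eqref{stabcond} the exact dissipation threshold, and checking that the constant in the pointwise $V$-bound is independent of $m$ so that the step-size restriction for \eqref{localCK0} is uniform; the latter holds because $\exp\big((1-C\Delta t)^{-1}T\big)$ stays bounded as $\Delta t\to0$.
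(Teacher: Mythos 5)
Your proposal is correct and follows essentially the same route as the paper's proof: the same error equation, the same splitting of the nonlinear defect into $\mathcal G(C_k^\beta(\phi^n))-\mathcal G(C_k^\beta(\phi(t^n)))$ and $\mathcal G(C_k^\beta(\phi(t^n)))-\mathcal G(\phi(t^{n+\beta}))$, testing with $\Delta t\,C_k^\beta(e^{n+1})$ combined with the split \eqref{splitB} and the telescoping bounds from Lemma \ref{Gstability} and Theorem \ref{Thm1}, the choice $\varepsilon=1/\sqrt{\gamma}$, the discrete Gronwall lemma, and the same induction that closes \eqref{localCK0} by extracting the single dissipation term $\|C_k^\beta(e^{m+1})\|^2\le C\Delta t^{2k-1}$. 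The only cosmetic deviation is bounding the $E_k^{n+1}$ term via the dual norm rather than the $H$-norm pairing used in the paper, which yields the same $O(\Delta t^{2k})$ contribution.
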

\begin{proof}
We shall prove \eqref{localCK0} and \eqref{error0} by induction. Suppose we already have
\begin{equation}\label{localCK1}
C_k^{\beta}(\phi^{n}) \in \mathcal{B}_{\phi(t^{n+\beta})}, \quad \forall n\le m,
\end{equation}
and \eqref{error0} is satisfied with all $n\le m-1$, we need  to prove
\begin{equation}\label{localCK2}
C_k^{\beta}(\phi^{m+1}) \in \mathcal{B}_{\phi(t^{m+1+\beta})},
\end{equation}
and \eqref{error0}  is satisfied with all $n\le m$.

Subtracting \eqref{nonlinear_exact} with $m=n+\beta$ from \eqref{genIMEX} and multiplying by $\Delta t$, we obtain
\begin{equation}\label{error1}
A_k^{\beta}(e^{n+1})+\Delta t\mathcal{L}B_k^{\beta}(e^{n+1})=-\Delta t\big(\mathcal{G}[C_k^{\beta}(\phi^n)]-\mathcal{G}[\phi(t^{n+\beta})]\big)+E_k^{n+1} +\Delta t \mathcal{L}R_k^{n+1}
\end{equation}
where $E_k^{n+1},\,R_k^{n+1}$ are given in \eqref{truncation0}. We split $ \mathcal{G}[C_k^{\beta}(\phi^n)]-\mathcal{G}[\phi(t^{n+\beta})]$ as
\begin{equation}\label{P}
\begin{split}
\mathcal{G}[C_k^{\beta}(\phi^n)]-\mathcal{G}[\phi(t^{n+\beta})]&=\big(\mathcal{G}[C_k^{\beta}(\phi^n)]-\mathcal{G}[C_k^{\beta}(\phi(t^n))]\big)+\big(\mathcal{G}[C_k^{\beta}(\phi(t^n))]-\mathcal{G}[\phi(t^{n+\beta})]\big)\\
&=:T_1^n+T_2^n.
\end{split}
\end{equation}
Taking  the inner product of  \eqref{error1} with $C_k^{\beta}(e^{n+1})$, and splitting $B_k^{\beta}(e^{n+1})$ as in \eqref{splitB},  we obtain
\begin{equation}\label{error2}
\begin{split}
& \big(A_k^{\beta}(e^{n+1}), C_k^{\beta}(e^{n+1})\big)+\Delta t \eta_k(\beta) \|C_k^{\beta}(e^{n+1})\|^2+\Delta t\big(\mathcal{L}D_k^{\beta}(e^{n+1}),C_k^{\beta}(e^{n+1})\big)\\
& = -\Delta t\big(T_1^n, C_k^{\beta}(e^{n+1})\big)-\Delta t\big(T_2^n, C_k^{\beta}(e^{n+1})\big)+\big(E_k^{n+1}, C_k^{\beta}(e^{n+1}) \big)+\Delta t \big(\mathcal{L}R_k^{n+1}, C_k^{\beta}(e^{n+1}) \big).
\end{split}
\end{equation}
Next, we bound the right hand side of \eqref{error2} with the help of the  consistency estimate. First, it follows from \eqref{TaylorC} that with $\Delta t$ sufficiently small, we have
$C_k^{\beta}(\phi(t^n)) \in \mathcal{B}_{\phi(t^{n+\beta})}$, then for the terms with $T_1^n$ and $T_2^n$, it follows from \eqref{LocalLp} and \eqref{localCK1} that for any given $\varepsilon>0$,
\begin{equation}\label{T1}
\big |\big(T_1^n, C_k^{\beta}(e^{n+1})\big)\big| \le \|T_1^n\|_{\star}\|C_k^{\beta}(e^{n+1})\| \le \frac{\varepsilon}{2}(\gamma\|C_k^{\beta}(e^n)\|^2+\mu|C_k^{\beta}(e^n)|^2)+\frac{1}{2\varepsilon}\|C_k^{\beta}(e^{n+1})\|^2,
\end{equation}
 With $P_k^n$ defined in \eqref{truncation0}, we have
\begin{equation}\label{T2}
\begin{split}
\big|\big(T_2^n, C_k^{\beta}(e^{n+1})\big)\big| \le \|T_2^n\|_{\star}\|C_k^{\beta}(e^{n+1})\|  & \le  \frac{1}{\rho}(\gamma\|P_k^n\|^2+\mu|P_k^n|^2)+\frac{\rho}{4}\|C_k^{\beta}(e^{n+1})\|^2,\\
& \le C(\Delta t)^{2k}+\frac{\rho}{4}\|C_k^{\beta}(e^{n+1})\|^2.
\end{split}
\end{equation}
Similarly,
\begin{equation}\label{Ek}
\big(E_k^{n+1}, C_k^{\beta}(e^{n+1}) \big) \le \frac{1}{2\Delta t}|E_k^{n+1}|^2+\frac{\Delta t}{2}|C_k^{\beta}(e^{n+1})|^2 \le C (\Delta t)^{2k+1}+\frac{\Delta t}{2}|C_k^{\beta}(e^{n+1})|^2,
\end{equation}
and
\begin{equation}\label{Rk}
\big(\mathcal{L}R_k^{n+1}, C_k^{\beta}(e^{n+1}) \big) \le \frac{1}{\rho}\|R_k^{n+1}\|^2+\frac{\rho}{4}\| C_k^{\beta}(e^{n+1}) \|^2 \le C(\Delta t)^{2k}+\frac{\rho}{4}\|C_k^{\beta}(e^{n+1})\|^2.
\end{equation}
Now, under the stability condition \eqref{stabcond}, combining the assumption on the initial steps \eqref{initial} and estimations in  \eqref{T1}-\eqref{Rk}, taking $\varepsilon=\frac{1}{\sqrt{\gamma}}$ in \eqref{T1}, and following the same process as in the proof of Theorem \ref{Thm_Stab1} to handle the terms on the left hand side of \eqref{error2}, we can obtain the following from \eqref{error2}:
\begin{equation}\label{error3}
g_k|e^{n+1}|^2+\frac{\rho}{2}\Delta t\sum_{q=k-1}^{n+1}\|C_k^{\beta}(e^{q})\|^2 \le C \Delta t \sum_{q=0}^{n+1}|e^q|^2+C(\Delta t)^{2k},\quad \forall\, n\le m.
\end{equation}
Therefore, by applying the discrete Gronwall lemma \ref{Gron1} to \eqref{error3}, we can obtain
\begin{equation}\label{error4}
g_k|e^{m+1}|^2+\frac{\rho}{2}\Delta t\sum_{q=k-1}^{m+1}\|C_k^{\beta}(e^{q})\|^2 \le C\exp \big((1-C\Delta t)^{-1}T \big) (\Delta t)^{2k},\quad \forall  m+1\le \frac{T}{\Delta t},
\end{equation}
with $C$ a constant independent of $\Delta t$ which implies  \eqref{error0}. Finally, it follows from \eqref{error4} and \eqref{TaylorC} that
{\color{black}\begin{equation}
\begin{split}
\|C_k^{\beta}(\phi^{m+1})-\phi(t^{m+1+\beta})\|^2 &\le 2\|C_k^{\beta}(\phi^{m+1})-C_k^{\beta}(\phi(t^{m+1}))\|^2+2\|C_k^{\beta}(\phi(t^{m+1}))-\phi(t^{m+1+\beta})\|^2\\
&\le 2\|C_k^{\beta}(e^{m+1})\|^2+\mathcal{O}( \Delta t^{2k})\\
&\le \bar{C} \Delta t^{2k-1},
\end{split}
\end{equation}}with $\bar{C}$ a constant independent of $\Delta t$, which implies \eqref{localCK2} for $\Delta t$ sufficiently small.
Thus, the proof is complete with the induction.
\end{proof}
\begin{rem}
Note that $\eta_k(\beta)$ in \eqref{eta} monotonically increases as $\beta$ increases.
On the other hand, for many applications, given $\delta>0$, one can choose $\gamma<\delta$ with a suitable $\mu$ such that \eqref{LocalLp} is satisfied \cite{akrivis2015fully}. Hence, the stability condition \eqref{stabcond} can always be satisfied with these applications.
\end{rem}
\begin{rem}
{\color{black} The analysis in Theorem 2 and Theorem 3 can not be directly extended to the standard BDF methods (with $\beta=1$) since $\eta_k(1)=0$.}
\end{rem}
\subsection{Comparison to the classical BDF and IMEX schemes}
In this subsection, we compare the stability condition \eqref{stabcond}  to that of the classical BDF and IMEX methods (with Taylor expansion at time $t^{n+1}$) for which the stability condition \eqref{stabcond} does not apply. So we shall derive below a corresponding stability condition for the classical BDF and IMEX methods.  To simplify the presentation, we assume $\mu=0$ in \eqref{LocalLp} since the general case can be handled by applying the discrete Gronwall lemma as in Theorem \ref{Thmerror}.

The stability condition \eqref{stabcond} in Theorem \ref{Thmerror} is derived from
\begin{equation}\label{stabA2}
	\big(\mathcal{L}B_k^{\beta}(e^n), C_k^{\beta}(e^n) \big)=\big(\eta_k(\beta) C_k^{\beta}(e^n)+D_k^{\beta}(e^n), C_k^{\beta}(e^n)\big)=\eta_k(\beta)\|C_k^{\beta}(e^n)\|^2+\big(D_k^{\beta}(e^n), C_k^{\beta}(e^n)\big),
\end{equation}
and
\begin{equation}\label{stabA1}
\begin{split}
\big(\mathcal{G}[C_k^{\beta}(\phi^n)]-\mathcal{G}[C_k^{\beta}(\phi(t^n))],C_k^{\beta}(e^n)\big) &\le \mathop{\rm {min}}\limits_{\varepsilon>0}\big(\frac{\varepsilon}{2}\|\mathcal{G}[C_k^{\beta}(\phi^n)]-\mathcal{G}[C_k^{\beta}(\phi(t^n))]\|_{\star}^2+\frac{1}{2\varepsilon}\|C_k(e^n)\|^2\big)\\
& \le \mathop{\rm {min}}\limits_{\varepsilon>0}\big(\frac{\varepsilon\gamma}{2} \|C_k^{\beta}(e^n)\|^2+\frac{1}{2\varepsilon} \|C_k^{\beta}(e^n)\|^2\big)\\
& \overset{\varepsilon=\frac{1}{\sqrt{\gamma}}}=\sqrt{\gamma}\|C_k^{\beta}(e^n)\|^2.
\end{split}
\end{equation}
As a result, the stability condition \eqref{stabcond} is derived by requiring $\eta_k(\beta)>\sqrt{\gamma} $ since the term $\big(D_k^{\beta}(e^n), C_k^{\beta}(e^n)\big)$ can be handled by Lemma \ref{Gstability} and Theorem \ref{Thm1}.

On the other hand, for the classical IMEX$k$ $(k=2,3,4)$ schemes, i.e., \eqref{genIMEX} with $\beta=1$, the suitable multipliers are given as $e^n-\tilde{\eta}_k e^{n-1}$ \cite{nevanlinna1981} and the smallest possible values of $\tilde{\eta}_k$ are
\begin{equation}\label{eta_tilde}
\tilde{\eta}_2=0,\quad \tilde{\eta}_3=0.0836,\quad \tilde{\eta}_4=0.2878.
\end{equation}

Hence,  the corresponding versions of \eqref{stabA1} and \eqref{stabA2} become
\begin{equation}\label{stabB1}
\begin{split}
& \big(\mathcal{G}[\sum_{q=0}^{k-1}c_{k,q}(1)\phi^{n-k+1+q}]-\mathcal{G}[\sum_{q=0}^{k-1}c_{k,q}(1)\phi(t^{n-k+1+q})],  e^n-\tilde{\eta}_k e^{n-1} \big) \\& \le \mathop{\rm {min}}\limits_{\varepsilon>0}\big(\frac{\varepsilon}{2}\|\mathcal{G}[\sum_{q=0}^{k-1}c_{k,q}(1)\phi^{n-k+1+q}]-\mathcal{G}[\sum_{q=0}^{k-1}c_{k,q}(1)\phi(t^{n-k+1+q})]\|_{\star}^2+\frac{1}{2\varepsilon}\|e^n-\tilde{\eta}_k e^{n-1}\|^2\big)\\
& \le \mathop{\rm {min}}\limits_{\varepsilon>0}\big(\frac{\varepsilon \gamma}{2}\sum_{q=0}^{k-1}|c_{k,q}(1)|\|e^{n-k+1+q}\|^2+\frac{1}{2\varepsilon}\|e^n-\tilde{\eta}_ke^{n-1}\|^2  \big)\\
& \le \mathop{\rm {min}}\limits_{\varepsilon>0}\big(\frac{\varepsilon \gamma}{2}\sum_{q=0}^{k-1}|c_{k,q}(1)|\|e^{n-k+1+q}\|^2+\frac{1}{2\varepsilon}(\|e^n\|^2+\tilde{\eta}_k^2\|e^{n-1}\|^2) \big),
\end{split}
\end{equation}
where $c_{k,q}(1)$ are defined in \eqref{IMEX2coeff}-\eqref{IMEX4coeff} with $\beta=1$, and
\begin{equation}\label{stabB2}
\big(\mathcal{L}e^n, e^n-\tilde{\eta}_ke^{n-1} \big) = \|e^n\|^2-\tilde{\eta}_k\big(\mathcal{L}e^n, e^{n-1} \big) \ge \|e^n\|^2-\frac{\tilde{\eta}_k}{2}(\|e^n\|^2+\|e^{n-1}\|^2).
\end{equation}
Combining \eqref{stabB1} and \eqref{stabB2}, we obtain the following stability condition for the classical IMEX type scheme with multiplier $e^n-\tilde{\eta}_ke^{n-1}$,
\begin{equation}\label{stabcond2}
1-\tilde{\eta}_k>\mathop{\rm {min}}\limits_{\varepsilon>0}\big(\frac{\varepsilon \gamma}{2}\sum_{q=0}^{k-1}|c_{k,q}(1)|+\frac{1}{2\varepsilon}(1+\tilde{\eta}_k^2) \big)\ge \sqrt{\tilde{c}_k\gamma(1+\tilde{\eta}_k^2)},
\end{equation}
with $\tilde{c}_k=\sum_{q=0}^{k-1}|c_{k,q}(1)|$.
Comparing  \eqref{stabcond} with \eqref{stabcond2}, we  have two remarks:
\begin{itemize}
\item  From \eqref{eta_tilde} and \eqref{stabcond2}, we observe that for the classical  IMEX schemes, higher-order  (i.e., larger $k$) requires stronger stability condition on the parameter $\gamma$ appearing in \eqref{LocalLp}. It is this  requirement on the time step that  limits the use of high order scheme in practice.
\item On the other hand, for the new class of IMEX schemes,   we observe from \eqref{eta} and \eqref{stabcond} that   the stability condition on $\gamma$ becomes weaker as we increase  $\beta$. In particular, the new higher-order schemes  with a suitable $\beta$ can be stable {\color{black}with a larger time step than that is  allowed with a   classical IMEX scheme of the same-order. For example, we have from \eqref{eta} that $\eta_2(2)=\eta_3(3)=\eta_4(5)=1/2$ which indicates that the stability condition \eqref{stabcond} of the new fourth-order scheme  with $\beta=5$ and third-order scheme with $\beta=3$  is the same as  that of the second-order classical scheme. Our numerical results in Example 3 below indicate that we can use the maximum allowable time step of the second-order classical scheme   in our new third- and fourth-order schemes to obtain more accurate results.}
%This is consistent with the stability regions depicted in Table \ref{comparison}. {\color{red} We should rewrite this paragraph since we just show weaker requirements on $\gamma$ is needed, not directly related to the time step requirement. Also comment C by the editor.  }
%{\color{blue} I think this is an important observation. For the linear problems, we can see this result from the figures of stability regions. For the nonlinear problem, under the same assumptions on the nonlinear operator, we can show the advantages of choosing larger $\beta$ by comparing \eqref{stabcond} and \eqref{stabcond2}. }
\end{itemize}
\begin{rem}
{\color{black} Note that a new multiplier $e^n-\frac{2}{169} e^{n-1}-\frac{11}{169} e^{n-2}$ for the classical BDF3 scheme is reported in \cite{akrivis2016backward} and since $\hat{\eta}_3:=\frac{2}{169}+\frac{11}{169} < \tilde{\eta}_3=0.0836$, one can obtain milder conditions on $\gamma$ compared to adopting the Nevanlinna-Odeh multipliers. Nevertheless, we can derive even milder conditions on $\gamma$ by choosing larger $\beta$ in our new methods.}
\end{rem}
\section{Extension to fifth-order}
In Theorem \ref{Thm1}, we found suitable multipliers for the second- and third-order scheme with $ \beta \ge 1$ and for the fourth-order scheme with $\beta \ge 2$. %In practice, one can also fix a specific $\beta$ to construct the numerical schemes and then easily verify the conditions in Lemma \ref{Gstability} with one specific $\beta$.
In this section, we would like to show numerically that the multiplier we found in section \ref{multiplier} also works for the fifth-order scheme.
%\subsection{Fifth order scheme.}

Following the same notations as before, we can obtain the coefficients $a_{5,q}(\beta), b_{5,q}(\beta), c_{5,q}(\beta)$ by solving the linear systems \eqref{solve_akq}, \eqref{solve_bkq} and \eqref{solve_ckq} with $k=5$, respectively. Then we can define $A_5^{\beta}(\phi^i), B_5^{\beta}(\phi^i), C_5^{\beta}(\phi^i)$ as in \eqref{IMEX234coeff}. Next,  we split $B_5^{\beta}(\phi^{n+1})$ as
\begin{equation}\label{eta5}
B_5^{\beta}(\phi^{n+1})=\eta_5(\beta)C_5^{\beta}(\phi^{n+1})+D_5^{\beta}(\phi^{n+1}),\quad \rm{with} \quad \eta_5(\beta)=\frac{\beta-1}{\beta+15},
\end{equation}
and define $\tilde{A}_5^{\beta}(\zeta), \tilde{C}_5^{\beta}(\zeta), \tilde{D}_5^{\beta}(\zeta)$ as in \eqref{poly}. Following the key steps in the proof of Theorem \ref{Thm1}, we present a sequence of numerical results to show that  $C_5^{\beta}(\phi^{n+1})$ is a suitable multiplier for the fifth-order scheme with $6.5 \le \beta \le 100$.
\begin{itemize}
\item  We have $\rm{gcd}\big(\tilde{A}^{\beta}_5(\zeta),\zeta\tilde{C}^{\beta}_5(\zeta)\big)=\rm{gcd}\big(\tilde{D}^{\beta}_5(\zeta),\tilde{C}^{\beta}_5(\zeta)\big)=1$ since $\tilde{A}_5^{\beta}(0)=a_{5,0} \neq 0$ and
\begin{equation}
\begin{split}
& \rm{det}\,Sly(\tilde{A}^{\beta}_5,\tilde{C}^{\beta}_5) =\frac{\beta^{12}}{221184} + \frac{11\beta^{11}}{110592} + \frac{635\beta^{10}}{663552} + \frac{78937\beta^9}{14929920} + \frac{552809\beta^8}{29859840} + \frac{638383\beta^7}{14929920} \\
&+ \frac{9801769\beta^6}{149299200} + \frac{4912619\beta^5}{74649600} + \frac{765683\beta^4}{18662400} + \frac{225157\beta^3}{15552000} + \frac{6143\beta^2}{2488320} + \frac{2071\beta}{10368000} + \frac{1}{160000}>0,
\end{split}
\end{equation}
and
\begin{equation}
    \rm{det}\,Sly(\tilde{D}^{\beta}_5,\tilde{C}^{\beta}_5)=\frac{\beta^3(\beta^3 + 6\beta^2 + 11\beta + 6)^3}{13824}>0.
    \end{equation}
    \item  Let $r_1,r_2,...,r_5$ be the five roots of  $\tilde{C}^{\beta}_5(\zeta)=0$, and denote $r_{\rm max}=\mathop{\rm{max}}\limits_{1\le i \le5}|r_i|$. In Fig. \ref{fig:C5root}, we plot   the numerical values of $r_{\rm max}$ for $0 \le \beta \le 100$. We observe that   $r_{\rm max}<1$ for $0 \le \beta \le 100$, which implies $\tilde{C}^{\beta}_5(\zeta)$ is holomorphic outside the unit disk in the complex plane.
  \begin{figure}[!htbp]
 \centering
{ \includegraphics[scale=.3]{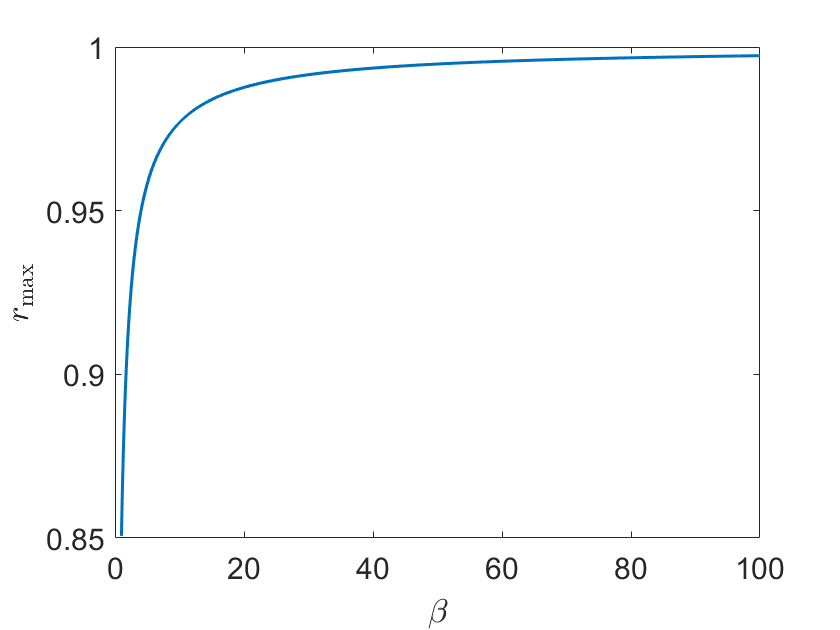}}
\caption{$r_{\rm max}$ with different $\beta$. }\label{fig:C5root}
 \end{figure}
 \item  Following the same process as in the proof of Theorem \ref{Thm1}, we can derive that $\text{Re}\frac{\tilde{A}_5^{\beta}(\zeta)}{\zeta\tilde{C}_5^{\beta}(\zeta)}>0\, \text{for}\, |\zeta|>1$ is equivalent to
     \begin{equation*}
     \frac{1}{180}(1-y)f_5(y)\ge 0,\quad \forall y \in[-1,1],
     \end{equation*}
     where
\begin{equation}\label{IMEXA5}
f_5(y)=\sigma_4(\beta)y^4+\sigma_3(\beta)y^3+\sigma_2(\beta)y^2+\sigma_1(\beta)y+\sigma_0 \ge 0,\quad \forall y \in[-1,1],
\end{equation}
with
\begin{subequations}
\begin{align}
& \sigma_4(\beta)=5\beta^8+ 70\beta^7+ 390\beta^6 + 1090\beta^5+ 1539\beta^4+ 820\beta^3- 350\beta^2 - 540\beta - 144,\\
& \sigma_3(\beta)=- 20\beta^8 - 280\beta^7 - 1550\beta^6 - 4260\beta^5  - 5836\beta^4- 3024\beta^3 + 950\beta^2+ 1396\beta+ 336,\\
& \sigma_2(\beta)=30\beta^8+ 420\beta^7+ 2310\beta^6+ 6240\beta^5+ 8244\beta^4  + 3932\beta^3 - 1260\beta^2 - 1340\beta- 204,\\
& \sigma_1(\beta)=- 20\beta^8 - 280\beta^7- 1530\beta^6- 4060\beta^5- 5136\beta^4 - 2072\beta^3+ 1070\beta^2+ 652\beta + 36,\\
& \sigma_0(\beta)=5\beta^8  + 70\beta^7   + 380\beta^6    + 990\beta^5  + 1189\beta^4  + 344\beta^3   - 410\beta^2   - 168\beta   + 336.
\end{align}
\end{subequations}

\item On the other hand, we can also show that  $\text{Re}\frac{\tilde{D}_5^{\beta}(\zeta)}{\tilde{C}_5^{\beta}(\zeta)}>0 \, \text{for}\, |\zeta|>1$ is equivalent to
\begin{equation}\label{IMEXD5}
h_5(y)=\mu_4(\beta)y^4+\mu_3(\beta)y^3+\mu_2(\beta)y^2+\mu_1(\beta)y+\mu_0(\beta) \ge 0, \quad \forall y \in [-1,1],
\end{equation}
with
\begin{small}
\begin{subequations}
\begin{align}
& \mu_4(\beta)=\frac{\beta(\beta^2 + 3\beta + 2)^2(6\beta^3 + 37\beta^2 + 48\beta -27)}{18(\beta + 15)},\\
& \mu_3(\beta)=-\frac{\beta(24\beta^7 + 292\beta^6 + 1366\beta^5 + 3013\beta^4 + 2881\beta^3 +193\beta^2 - 1391\beta - 618)}{18(\beta + 15)},\\
& \mu_2(\beta)=\frac{\beta(12\beta^7 + 146\beta^6 + 670\beta^5 + 1385\beta^4 + 1021\beta^3 - 553\beta^2 - 1127\beta - 402)}{6(\beta + 15)},\\
& \mu_1(\beta)=-\frac{(24\beta^8 + 292\beta^7 + 1314\beta^6 + 2527\beta^5 + 1203\beta^4 - 2405\beta^3 - 3117\beta^2 - 1008\beta - 270)}{18(\beta + 15)},\\
& \mu_0(\beta)=\frac{6\beta^8 + 73\beta^7 + 322\beta^6 + 571\beta^5 +91\beta^4 - 926\beta^3 - 995\beta^2 - 312\beta + 18}{18(\beta + 15)}.
\end{align}
\end{subequations}
\end{small}
In Fig. \ref{fig:fh5}, we plot the minimum values of $f_5(y)$ and $h_5(y)$ in $[-1,1]$ with $1\le \beta \le 100$, which show \eqref{IMEXA5} is true for $1\le \beta \le 100$ and \eqref{IMEXD5} is true for $6.5 \le \beta \le 100$.
Therefore, we have numerically verified that Theorem \ref{Thm1} is also true for  \eqref{genIMEX} with $k=5$ and $6.5 \le \beta \le 100$.

\begin{rem}
The choice of $\eta_5(\beta)$ in \eqref{eta5} is not unique, and the range $6.5 \le \beta \le 100$ is not necessarily the largest possible. But  our numerical results indicate \eqref{IMEXA5} and \eqref{IMEXD5} do not hold for some $\beta>100$.  % different $\eta_5(\beta)$ will lead to different suitable range of $\beta$.

For the sixth-order scheme, our numerical results show there exists $|r_6|>1$, which is one root of $\tilde{C}^{\beta}_6(\zeta)=0$ and this implies that it is not holomporphic outside the unit disk. Hence, the proof in Theorem \ref{Thm1} can not be extended to the sixth-order.
\end{rem}

\begin{figure}[!htbp]
% \captionsetup[subfigure]{labelformat=empty}
 \centering
%\subfigure[Slope VS $\xi $ ]{ \includegraphics[scale=.3]{SlopeVSxi.png}} HFK: Figure 21 (a) & (b)
  \subfigure{ \includegraphics[scale=.28]{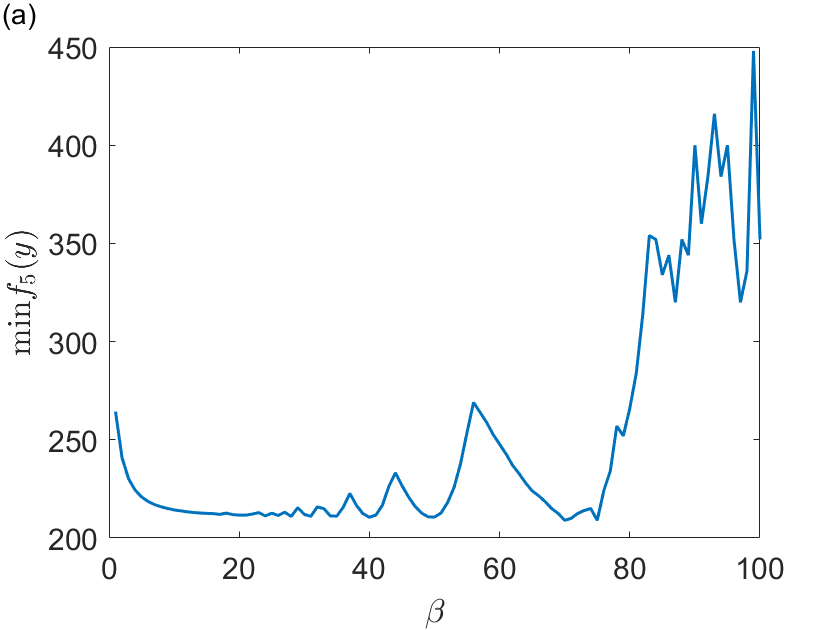}}
  \subfigure{ \includegraphics[scale=.28]{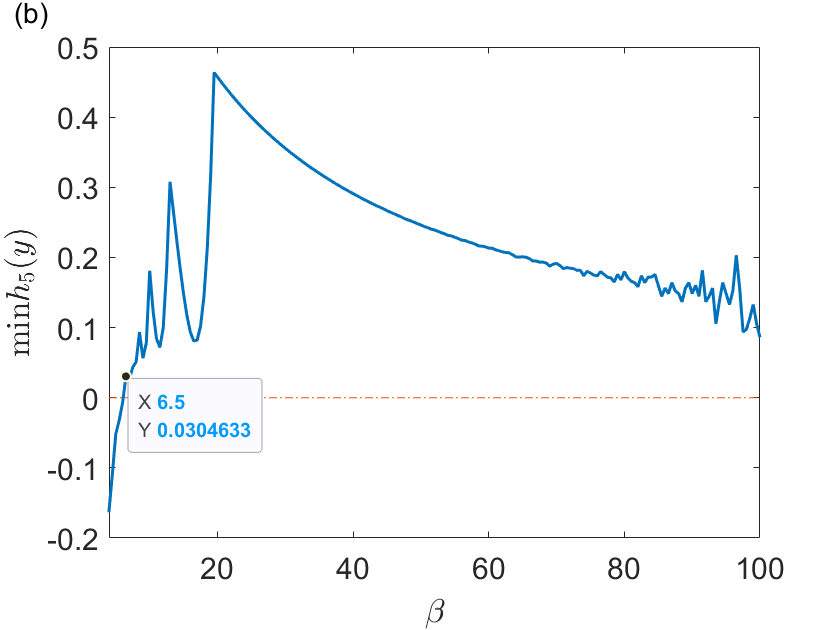}}
  \caption{Minimum value of $f_5$ and $h_5$ in $[-1,1]$ with different $\beta$. }\label{fig:fh5}
 \end{figure}
\end{itemize}

\section{Numerical examples}
In this section, we provide some numerical approximation of  the Allen-Cahn \cite{allen1979microscopic} and Cahn-Hilliard \cite{cahn1958free} equations to validate our theoretical results, and to show the advantages of the new IMEX  schemes \eqref{genIMEX}.

Given a free energy
\begin{equation}
\mathcal{E}[\phi]=\int\frac{1}{2}|\nabla \phi|^2+\frac{1}{4\varepsilon^2}(1-\phi^2)^2 d \bm x.
\end{equation}
We consider the $H^{-\alpha}$ gradient flow,
\begin{equation}\label{ac_ch}
\frac{\partial \phi}{\partial t}=-m(-\Delta)^{\alpha}\big(-\Delta \phi-\frac{1}{\varepsilon^2}\phi(1-\phi^2)\big)+f(t),\quad \alpha=0 \quad \rm{or} \quad 1,
\end{equation}
where $f$ is the given source term. When $\alpha=0$, \eqref{ac_ch} is the standard Allen-Cahn equation; when $\alpha=1$, it becomes the standard Cahn-Hilliard equation.

\textit{Example 1.} In the first example, we validate the convergence order of the new schemes. Considering a two-dimensional domain {\color{black} $(0, 2)^2$} with periodic boundary conditions, let $\alpha=0$, $m=\varepsilon=0.2$ in \eqref{ac_ch} and $f$ is chosen such that the exact solution of \eqref{ac_ch} is
\begin{equation}
\phi(x,y,t)=e^{\sin(\pi x)\sin(\pi y)}\sin(t).
\end{equation}
We use the Fourier Galerkin method with $Nx=Ny=40$ in space   so that the spatial discretization error is negligible compared to the time discretization error. In Fig. \ref{fig:convergence}, we plot the convergence rate of the $L^2$ error at $T=1$ by using the second- to fourth- order schemes \eqref{genIMEX}. We  observe the expected convergence order for all the cases with different  $\beta$. We also observe that for the same order, the error increases   slightly with  larger $\beta$.
\begin{figure}[!htbp]
 \centering
  \subfigure{ \includegraphics[scale=.35]{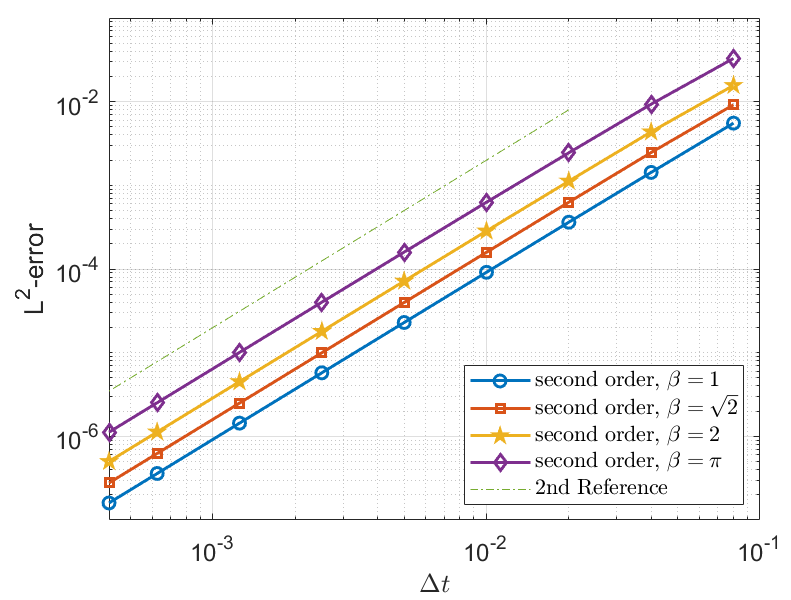}}
  \subfigure{ \includegraphics[scale=.35]{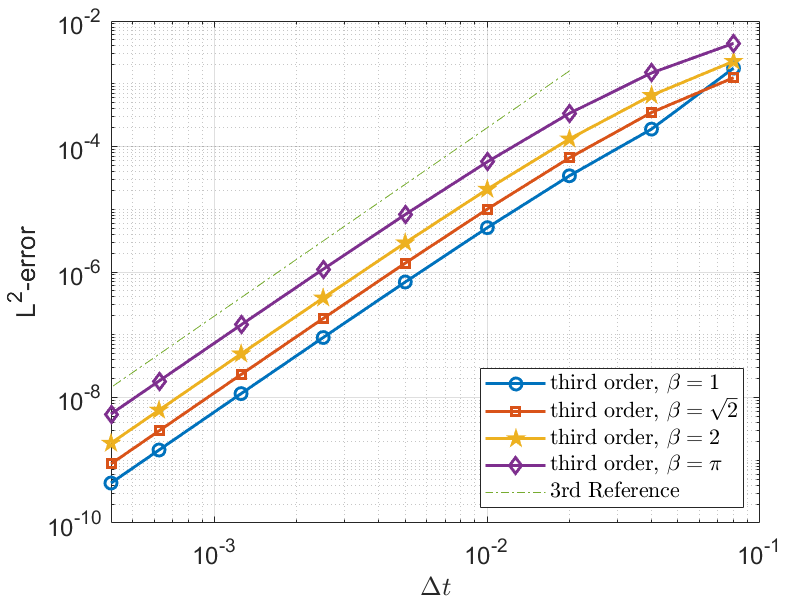}}
  \subfigure{ \includegraphics[scale=.35]{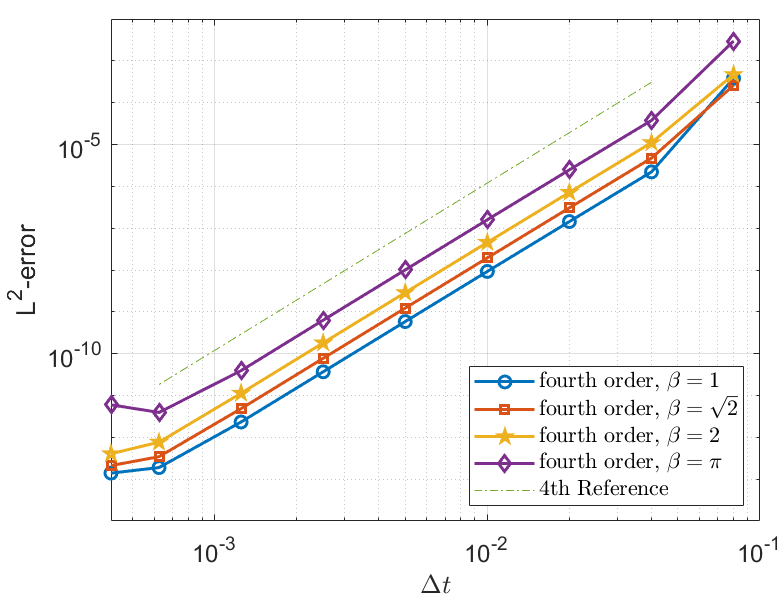}}
  \caption{ Convergence test for the general IMEX type methods. From left to right: second order, third order and fourth order schemes with different $\beta$. }\label{fig:convergence}
 \end{figure}

\textit{Example 2.} In the second example, we solve a benchmark problem for the Allen-Cahn equation \cite{chen1998applications}. Consider a two-dimensional domain $(-128,128)^2$  with a circle of radius $R_0=100$. In other words, the initial condition is given as
\begin{equation}
\phi(x,y,0)=\left\{
\begin{aligned}
&1,\quad x^2+y^2<100^2,\\
&-1,\quad x^2+y^2 \ge 100^2.
\end{aligned}
\right.
\end{equation}
By mapping the domain to $(-1,1)^2$, the parameters in \eqref{ac_ch} are given by $m=6.10351\times 10^{-5}$, $\varepsilon=0.0078$, $\alpha=0$ and $f=0$. In the sharp interface limit, the radius at time $t$ is given by
\begin{equation}
R=\sqrt{R_0^2-2t}.
\end{equation}
We use the Fourier Galerkin method with $Nx=Ny=512$ in space. Then we fix $\Delta t=0.75$, {which is the maximum time step we can use for the classical second-order scheme to get acceptable numerical results,} and use  \eqref{genIMEX} with different orders  and different $\beta$. We plot the computed radius $R(t)$ in Fig. \ref{fig:AC}, which shows that we can use higher-order schemes with the same large time step as the second-order schemes by choosing $\beta>1$. More importantly, we can get much more accurate results  with higher-order schemes. Here, $k=1, \beta=1$ represents the usual first-order scheme.

%get better numerical results by combining the high order general IMEX type method with $\beta>1$. More precisely,  with this fixed large $\Delta t$, the energy evolution in   Fig. \ref{fig:AC} indicates that the second- and third-order schemes with $\beta=1$ and fourth-order schemes with $\beta=2$ (it is unstable for the fourth order scheme with $\beta=1$) lead to wrong solutions,
%but correct solution is obtained with the third-order scheme with $\beta=3$ and fourth-order scheme with $\beta=4$. %since larger $\beta$ can offer better stability and high order scheme can ensure the accuracy even larger $\beta$ may lead to bigger truncation errors comparing to $\beta=1$.

%{\color{blue} The fourth order scheme with $\beta=1$ will blow up at the beginning and can not plot the energy evolution, so I chose $\beta=2$.}
\begin{figure}[!htbp]
 \centering
  \subfigure{ \includegraphics[scale=.55]{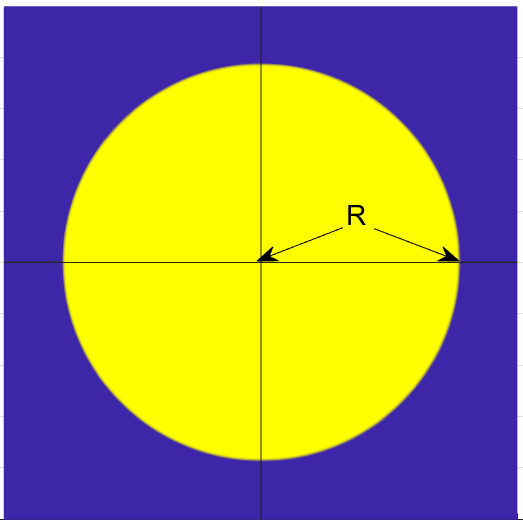}}\qquad\qquad
  \subfigure{ \includegraphics[scale=.48]{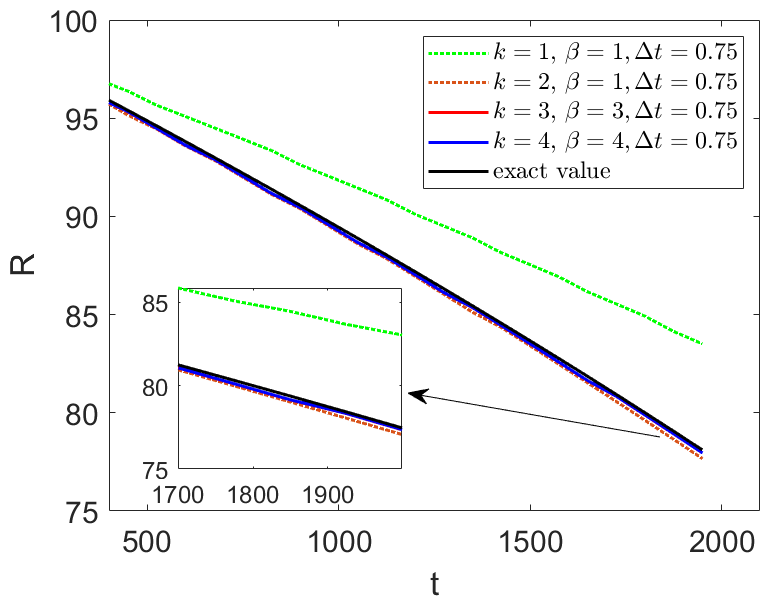}}\\
 % \subfigure{ \includegraphics[scale=.5]{AC_energy1.png}}\qquad
  %\subfigure{ \includegraphics[scale=.5]{AC_energy2.png}}\\
  \caption{ The evolution of radius $R$ with $\Delta t=0.75$ under different schemes.   }\label{fig:AC}
 \end{figure}

\textit{Example 3.} In the third example, we consider the Cahn-Hilliard equation in a two-dimensional domain $(0,1)^2$ with periodic boundary condition and let $\alpha=1$, $m=1$, $\varepsilon=0.02$ in \eqref{ac_ch}. The initial condition is given as $\phi(0)=0.2+r$ and $r$ is a random perturbation variable with uniform distribution in $[-0.02,0.02]$. We use the Fourier Galerkin method with $Nx=Ny=128$ in space. In Fig. \ref{fig:CH}, we compare the first- to the fourth-order schemes with different $\beta$, the reference solution is generated by using the classical fourth-order scheme with sufficiently small time step $\Delta t= 5\times 10^{-9}$.

Several observations are in order:
\begin{itemize}
\item 1. We take {\color{black}$\Delta t=7.5 \times 10^{-8}$} which is the maximum allowable time step for the classical second-order scheme, and  observe in Fig. \ref{fig:CH}(a) that we can use the same time step  for the higher-order schemes by choosing a suitable  $\beta>1$, and  obtain more accurate results.
\item 2. We observe in Fig. \ref{fig:CH}(b)  that the usual third- and fourth-order schemes with $\beta=1$ are unstable, but we can get correct solutions with the third- and fourth-order schemes by choosing a suitable  $\beta>1$.
\item 3. We also  observe in Fig. \ref{fig:CH}(b) that  $\beta$  too large may lead to inaccurate results due to larger truncation errors.
\end{itemize}
\begin{figure}[!htbp]
 \centering
  \subfigure[Comparisons of different order schemes]{ \includegraphics[scale=.32]{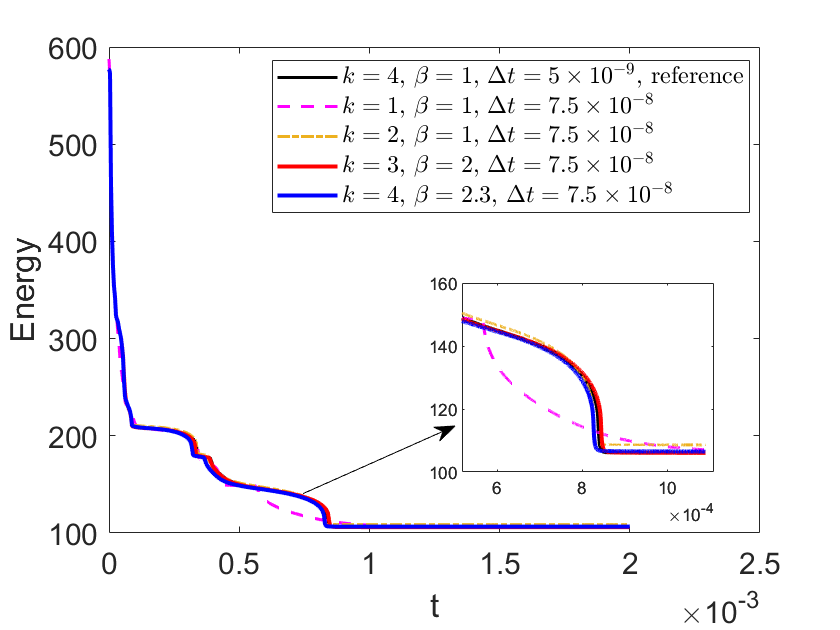}}
  \subfigure[Effect of $\beta$ in high order schemes]{ \includegraphics[scale=.32]{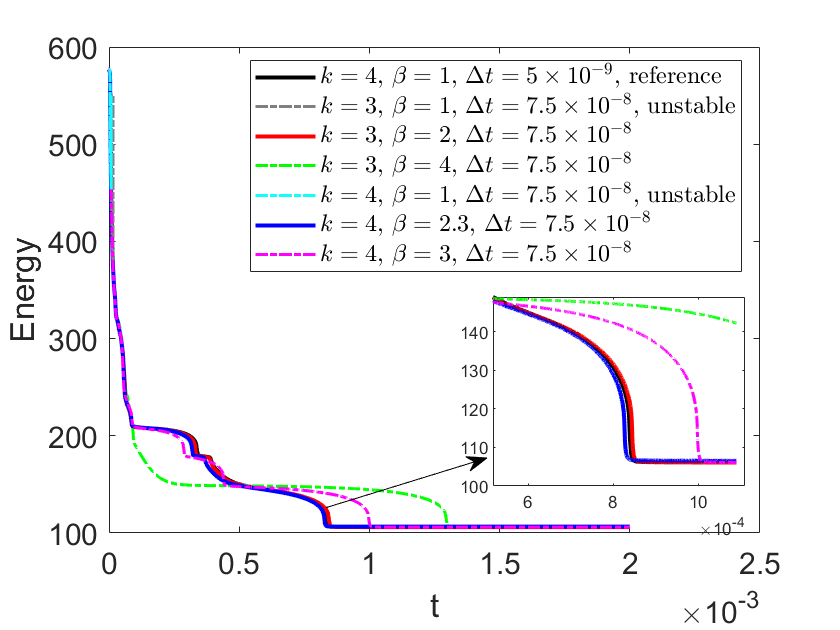}}\\
  %\subfigure[fourth order scheme]{ \includegraphics[scale=.32]{CH_BDF4.png}}
  %\subfigure[First to third order scheme]{ \includegraphics[scale=.32]{CH_BDFN.png}}\\
  \caption{ Comparisons of different order schemes with different $\beta$ for the Cahn-Hilliard equation}\label{fig:CH}
 \end{figure}
\section{Concluding remarks}

We presented in this paper a new class of BDF and IMEX  schemes  for  parabolic type equations based on the Taylor expansion at time $t^{n+\beta}$ with $\beta > 1$ being a tunable parameter.  The new schemes are a simple generalization of the classical  BDF or IMEX schemes with essentially the same computational efforts. However, they enjoy a remarkable property that their stability regions increase as the parameter $\beta$ increases, making it possible, by choosing a suitably large $\beta$,  to use high-order schemes with larger time steps that are only allowed with lower-order classical schemes.
We also  identified an explicit uniform  multiplier for the new  schemes of second- to fourth-order, and  carried out a  rigorous stability and error analysis   by using the energy argument. We also presented  numerical examples to show the benefit of using higher-order schemes with a suitable $\beta>1$.

This class of  new BDF and IMEX  schemes makes it possible to use higher-order schemes for highly stiff systems with reasonably large time steps, and can be easily  implemented with a minimal effort by modifying  the  code based on the classical BDF or IMEX schemes. Thus, it provides a much needed improvement on the stability of higher-order schemes.
The  idea behind the new class of BDF and IMEX schemes is very simple but original,  and can be extended to other type of numerical schemes.

\bibliographystyle{plain}
\bibliography{New_BDF}
\end{document}